\newcommand*\cocolon{%
        \nobreak
        \mskip6mu plus1mu
        \mathpunct{}%
        \nonscript
        \mkern-\thinmuskip
        {:}%
        \mskip2mu
        \relax
}
\newcommand{\@bbify}[1]{
  \ifcsname b#1\endcsname
  \message{WARNING: Overwriting b#1 with blackboard letter!}
  \fi
  \expandafter\edef\csname b#1\endcsname
  {\noexpand\ensuremath{\noexpand\mathbb #1}\noexpand\xspace}}
\newcommand{\@calify}[1]{
  \ifcsname c#1\endcsname
  \message{WARNING: Overwriting c#1 with calligraphic letter!}
  \fi 
  \expandafter\edef\csname c#1\endcsname
  {\noexpand\ensuremath{\noexpand\mathcal #1}\noexpand\xspace}}
\newcommand{\@bfify}[1]{
  \ifcsname bf#1\endcsname
  \message{WARNING: Overwriting c#1 with bold letter!}
  \fi
  \expandafter\edef\csname bf#1\endcsname
  {\noexpand\ensuremath{\noexpand\mathbf #1}\noexpand\xspace}}
\newcounter{@letter}\stepcounter{@letter}
\loop\@bbify{\Alph{@letter}}\@calify{\Alph{@letter}}\@bfify{\Alph{@letter}}
\newenvironment{tz}{\begin{center}\begin{tikzpicture}}{\end{tikzpicture}\end{center}}
\NewDocumentCommand{\punctuation}{ m m O{5pt} }{\node at ($(#1.east)-(0,#3)$) {#2};}
\newcounter{diagram}
\renewcommand{\thediagram}{\thetheorem}
\tikzset{%
node distance=1.5cm, la/.style={scale=0.8}, rr/.style={xshift=1.5cm},
space/.style={xshift=.5cm}, over/.style={auto=false,fill=white,inner sep=1.5pt, minimum size=0, outer sep=0},
    symbol/.style={%
        draw=none,
        every to/.append style={%
            edge node={node [sloped, allow upside down, auto=false]{$#1$}}},
            
    }, pro/.style={postaction={decorate,decoration={
        markings,
        mark=at position .5 with {\node at (0,0) {$\bullet$};}
      }},
      inner sep=.9ex,
      },
  n/.style={double equal sign distance, -implies}, t/.style={double distance=2.5pt, -implies, postaction={draw,-}}, 
}
\tikzstyle{d}=[double distance=.3ex]
\newcommand{\pushout}[1]{\node at ($({#1})-(10pt,-10pt)$) {$\ulcorner$};}
\newcommand{\pullback}[1]{\node at ($({#1})+(10pt,-10pt)$) {$\lrcorner$};}
\newlist{rome}{enumerate}{7}
\setlist[rome]{label=(\roman*)}
\setlist[enumerate]{label=(\arabic*)}
\newtheorem{theorem}{Theorem}[section]
\newtheorem{cor}[theorem]{Corollary}
\newtheorem{prop}[theorem]{Proposition}
\newtheorem{lemma}[theorem]{Lemma}
\declaretheorem[name=Theorem,numbered=yes]{theoremA}
\theoremstyle{definition}
\newtheorem{defn}[theorem]{Definition}
\newtheorem{ex}[theorem]{Example}
\newtheorem{notation}[theorem]{Notation}
\newtheorem*{question}{Question}
\theoremstyle{remark}
\newtheorem{rem}[theorem]{Remark}
\crefname{theorem}{Theorem}{Theorems}
\crefname{cor}{Corollary}{Corollaries}
\crefname{prop}{Proposition}{Propositions}
\crefname{lemma}{Lemma}{Lemmas}
\crefname{defn}{Definition}{Definitions}
\crefname{hyp}{Hypothesis}{Hypotheses}
\crefname{ex}{Example}{Examples}
\crefname{notation}{Notation}{Notations}
\crefname{descr}{Description}{Descriptions}
\crefname{constr}{Construction}{Constructions}
\crefname{rem}{Remark}{Remarks}
\newcommand{\set}{\mathrm{Set}}
\newcommand{\cat}{\mathrm{Cat}}
\newcommand{\sset}{\mathrm{sSet}}
\newcommand{\op}{\mathrm{op}}
\newcommand{\Dop}{\Delta^{\op}}
\newcommand{\Dsset}{\sset^{\Dop}}
\newcommand{\id}{\mathrm{id}}
\newcommand{\ho}{\mathrm{ho}}
\newcommand{\Map}{\mathrm{Map}}
\newcommand{\map}{\mathrm{map}}
\newcommand{\Sp}{\mathrm{Sp}}
\newcommand{\cosk}{\mathrm{cosk}}
\newcommand{\NI}{NI[1]}
\title{Modeling $(\infty,1)$-categories with Segal spaces}
\author{Lyne Moser}
\address{Fakultät für Mathematik, Universität Regensburg, Regensburg, Germany}
\email{lyne.moser@ur.de}
\author{Joost Nuiten}
\address{Institut de Mathématiques de Toulouse, Université Paul Sabatier, Toulouse, France}
\email{joost.nuiten@math.univ-toulouse.fr}
\subjclass{18N60, 55U35, 18N50}
\begin{document}

\begin{abstract}
    In this paper, we construct a model structure for $(\infty,1)$-categories on the category of simplicial spaces, whose fibrant objects are the Segal spaces. In particular, we show that it is Quillen equivalent to the models of $(\infty,1)$-categories given by complete Segal spaces and Segal categories. We furthermore prove that this model structure has desirable properties: it is cartesian closed and left proper. As applications, we get a simple description of the inclusion of categories into $(\infty,1)$-categories and of homotopy limits of $(\infty,1)$-categories.
\end{abstract}

\maketitle

\setcounter{tocdepth}{1}
\tableofcontents

\vspace{-.5cm}

\section{Introduction}
The language of higher categories, particularly $(\infty,1)$-categories, has become increasingly prominent in modern mathematics. While an ordinary category has a set of objects and a set of morphisms between any two objects, an $(\infty,1)$-category instead provides a homotopical version of a category, whose morphisms are organized into \emph{spaces}. These spaces of maps capture homotopies between morphisms, as well as higher-dimensional homotopies between these homotopies, and composition of morphisms is then required to be associative and unital only up to coherent homotopy. As such, $(\infty,1)$-categories play a central role in various fields of mathematics that involve a notion of homotopy, such as derived algebraic geometry, $K$-theory, and topological quantum field theory. 
However, defining the structure of an $(\infty,1)$-category directly can be quite challenging because of the many coherences involved. Because of this, one classically makes use of a \emph{model} of $(\infty, 1)$-categories to develop the theory and construct examples. In this paper, by a model of $(\infty,1)$-categories, we mean a Quillen model structure on a category, in which the $(\infty,1)$-categories are represented by the cofibrant-fibrant objects.

Many of the models for $(\infty, 1)$-categories make use of simplicial objects to encode the homotopy coherent composition of morphisms.
For instance, one of the first models for $(\infty, 1)$-categories, Boardman--Vogt’s \emph{quasi-categories} \cite{BV}, describes them in terms of simplicial \emph{sets} satisfying certain horn-lifting properties. The quasi-categorical model has been developed extensively by Joyal \cite{JoyalVolumeII} and Lurie \cite{Lurie} and is nowadays widely used, but nonetheless has some drawbacks: for example, the combinatorics of quasi-categories is rather nontrivial and encodes the mapping spaces and composition operations in a somewhat indirect way.

An alternative model of $(\infty,1)$-categories, using simplicial objects valued in spaces rather than sets, has been developed by Rezk \cite{Rezk}. This model is given by the \emph{complete Segal spaces} and is obtained as a simplicial localization of the model structure on simplicial objects in spaces. Here, a Segal space is a simplicial space $X\colon \Dop \to \sset$ that satisfies the Segal condition, i.e., the Segal map $X_n \xrightarrow{\simeq} X_1 \times_{X_0} \cdots \times_{X_0} X_1$ is a weak homotopy equivalence for all $n\geq 2$. Thinking of $X_0$ and $X_1$ as spaces of objects and morphisms, the Segal condition allows one to think of $X$ as encoding the algebraic structure of an $(\infty, 1)$-category: it identifies each $X_n$ as the space of $n$ composable morphisms in $X$, so that the simplicial structure maps of $X$ describe the homotopy coherent composition of morphisms.
The additional \emph{completeness} condition asserts that the space of objects $X_0$ is weakly homotopy equivalent to the underlying $\infty$-groupoid of~$X$. This condition is somewhat different from the Segal condition: it does not really encode any algebraic structure, but instead ensures that the weak equivalences between complete Segal spaces coincide with homotopical analogues of fully faithful and essentially surjective functors, typically referred to as \emph{Dwyer--Kan equivalences}. 
In fact, weak equivalences between Segal spaces are already the Dwyer--Kan equivalences, as any Segal space can be replaced by a Dwyer--Kan equivalent complete Segal space.

To define examples of $(\infty, 1)$-categories using complete Segal spaces, it is often easier to first construct a Segal space and then replace it abstractly by a Dwyer--Kan equivalent complete Segal space. Several instances of this approach appear in the literature, including:
\begin{itemize}
\item the various types of $(\infty,1)$-categories of bordisms from \cite{LurieTFT,CS},
\item the Morita $(\infty,1)$-category of associative algebras and bimodules from \cite{Haugseng},
\item the $(\infty,1)$-category of $(\infty, 1)$-categories and correspondences from \cite{AF},
\item the $(\infty,1)$-category $BM$ with one object and monoid $M$ as endomorphisms,
\item the localization of $(\infty,1)$-categories admitting a calculus of fractions from \cite{Nuiten}.
\end{itemize}
In these examples, it is not always easy to explicitly describe the space of objects of their complete Segal space model, that is, their underlying $\infty$-groupoid. Because of this, the completeness condition can often feel inconvenient in practice, and its necessity is called into question by the fact that weak equivalences between Segal spaces are already the Dwyer--Kan equivalences. One may therefore wonder if the completeness step can be avoided entirely, motivating the following question:

\begin{question} 
Is there a model of $(\infty,1)$-categories in which the fibrant objects are the Segal spaces and the weak equivalences between fibrant objects are the Dwyer--Kan equivalences?\footnote{This question was asked 14 years ago in a MathOverflow post by Chris Schommer-Pries: \url{https://mathoverflow.net/questions/29728/a-model-category-of-segal-spaces}}
\end{question}

One possible way to address this question is by restricting to a subcategory of simplicial spaces and modeling $(\infty,1)$-categories with \emph{Segal categories}. These were originally introduced by Hirschowitz and Simpson \cite{HS} and in fact predate the notion of complete Segal spaces. They arise as the fibrant objects of a model structure, first constructed by Pellissier \cite{Pellissier} and further studied by Bergner \cite{Bergnerfibrant,Bergner} and Simpson \cite{Simpson}. In this model, the completeness condition on a Segal space is replaced by a discreteness condition, requiring the space of objects to be a set. Since this is notably not a homotopical condition, this model structure can only be defined on the full subcategory $\cP\cat(\sset)$ consisting of those simplicial spaces whose level $0$ is a set, referred to as \emph{precategory objects} in $\sset$. However, this discreteness condition is not satisfied by all of the above examples, e.g.~not by the $(\infty, 1)$-category of bordisms, and it is not always convenient to choose a set of vertices in the space of objects of a Segal space.

In this paper, we provide a positive answer to the above question. We construct a model structure on the category of simplicial spaces, where the fibrant objects are the Segal spaces, and the ``completeness'' or ``discreteness'' condition is transferred to the cofibrant objects. Specifically, the cofibrant objects are defined to be the simplicial spaces $X\colon \Dop \to \sset$ such that the space of objects $X_0$ is weakly homotopy equivalent to a set. The following result is proven in \cref{sec:existence} and is stated as \cref{thm:MS}.

\begin{theoremA} \label{thmA:MS}
        There is a cofibrantly generated model structure on the category $\Dsset$ of simplicial spaces, which we refer to as the \emph{categorical model structure} and denote by $\Dsset_\mathrm{Cat}$, in which 
    \begin{rome}
        \item the cofibrations are the monomorphisms $f\colon X\to Y$ such that there is a set $R$ and a weak equivalence $X_0\amalg R\xrightarrow{\simeq} Y_0$ in $\sset$ whose restriction to $X_0$ is $f_0$,
        \item the fibrant objects are the Segal spaces,
        \item the weak equivalences between Segal spaces are the Dwyer--Kan equivalences,
        \item the fibrations between Segal spaces are the isofibrations.
    \end{rome} 
\end{theoremA}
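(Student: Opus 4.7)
The plan is to construct $\Dsset_\mathrm{Cat}$ via Jeff Smith's recognition theorem for combinatorial model structures. I would define the class $\cW$ of weak equivalences to consist of those maps $f\colon X\to Y$ which induce a Dwyer--Kan equivalence of Segal spaces after applying a Segal-fibrant replacement functor. Since every Segal space admits a DK-equivalent complete Segal space, as recalled in the introduction, $\cW$ coincides with the class of weak equivalences in Rezk's CSS model structure on $\Dsset$; in particular, $\cW$ is closed under $2$-out-of-$3$ and retracts, and is accessible.

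For the generating cofibrations, I would take
\[ I_\mathrm{Cat}=\{\emptyset\hookrightarrow F[0]\}\ \cup\ \{\partial F[m,n]\hookrightarrow F[m,n]\ :\ m\geq 0,\ n\geq 1\}, \]
where $F[0]$ is the representable simplicial space with one object and the $F[m,n]$ are the bisimplicial representables. The only generator that alters level zero is $\emptyset\hookrightarrow F[0]$, which adjoins a single discrete vertex; all other generators are identities at level zero. A cell-attachment argument should then match the weakly saturated hull of $I_\mathrm{Cat}$ with the class (i): any $I_\mathrm{Cat}$-cell complex modifies the space of objects only by coproduct with a discrete set, and conversely any cofibration $f$ in the sense of (i) can be filtered as $X\to X\sqcup R\to Y$, where the first map attaches copies of $F[0]$ and the second is a monomorphism that is an isomorphism at level zero, hence built from the higher-degree generators by a Reedy-style decomposition.

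For the generating trivial cofibrations $J_\mathrm{Cat}$, I would include the Segal spine inclusions (tensored with boundaries of simplices), Kan horn inclusions combined with higher simplicial cells, and a generator derived from $\NI$ that enforces the isofibration condition. Each of these is designed to be an identity at level zero, so that it lies in $I_\mathrm{Cat}$-cof; each is moreover a Rezk weak equivalence, so that it lies in $\cW$. To invoke Smith's theorem it then remains to check that $I_\mathrm{Cat}$-injective maps lie in $\cW$: any such map is a levelwise acyclic Kan fibration above level zero and an acyclic Kan fibration at level zero, hence a Rezk trivial fibration.

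Properties (i) and (ii) are then built into the construction of $I_\mathrm{Cat}$ and $J_\mathrm{Cat}$; property (iii) holds because $\cW$ agrees with Rezk's weak equivalences, which between Segal spaces are DK-equivalences; property (iv) follows from the $\NI$-generator of $J_\mathrm{Cat}$. The main obstacle, I expect, is the identification of $I_\mathrm{Cat}$-cof with the class (i): one direction is immediate from the cell-attachment description, but the other requires a careful construction of a cell decomposition for an arbitrary (i)-cofibration that realizes the specified weak equivalence $X_0\sqcup R\to Y_0$ through genuine cell attachment compatible with the monomorphism structure, together with a parallel calibration showing that $J_\mathrm{Cat}$ is precisely large enough to force Segal-ness and isofibration-ness on fibrants without inadvertently demanding any discretization condition at level zero, which would contract the class of cofibrant objects back to Segal precategories.
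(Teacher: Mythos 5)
Your overall strategy---a Smith-type recognition argument with $\cW$ taken to be the maps that become Dwyer--Kan equivalences after Segal-fibrant replacement (equivalently, the weak equivalences of $\Dsset_\mathrm{CSS}$)---is not unreasonable, and the accessibility and $2$-out-of-$3$ points are unproblematic. The first genuine gap is your generating set $I_\mathrm{Cat}$, which does not generate the class in (i). For $m=0$ and $n\geq 1$ your generator is (since $\partial F[0]=\emptyset$) the inclusion $\partial\Delta[n]\hookrightarrow\Delta[n]$ of constant simplicial spaces; this is not an identity in level zero, contrary to what you assert, and it is not a cofibration in the sense of (i): for $n=2$ there is no set $R$ for which $\partial\Delta[2]\amalg R\to \Delta[2]$ is a weak equivalence, since $\partial\Delta[2]$ is weakly equivalent to a circle. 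Hence $I_\mathrm{Cat}\text{-}\mathrm{cof}$ strictly contains the class (i), and statement (i) fails for your model structure. Dropping these generators does not repair the set either: you have also omitted the $n=0$ maps $\partial F[m]\hookrightarrow F[m]$ for $m\geq 1$, and, more seriously, relative cell complexes on $\emptyset\to F[0]$ and the remaining pushout-product generators change level zero only by adjoining an honest discrete set, so a map such as $\emptyset\to \Delta[1]$ (constant), which does lie in (i), could not be a cofibration (its level zero would have to be a retract of a discrete simplicial set). The paper's set $\cI$ (\cref{cI}) is calibrated exactly to avoid both failures---pushout-product boundary maps for $m\geq 1$ and all $n\geq 0$, the map $\emptyset\to F[0]$, and the constant horn inclusions $\Lambda^k[n]\hookrightarrow \Delta[n]$---and the identification of $\cI\text{-}\mathrm{cof}$ with the class (i) is \cref{Icof}. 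Relatedly, your verification of the condition ``$I$-injective $\Rightarrow$ weak equivalence'' rests on a false description of the $I$-injectives: for the correct $\cI$, an $\cI$-injective map is in level zero only a Kan fibration surjective on vertices (\cref{Ifibration}), not an acyclic fibration, and proving that such maps are weak equivalences is the most substantial step of the paper (\cref{sec:trivfibarewe}, via the coskeleton argument), not a formality.

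The second gap is structural: Smith's theorem does not let you decree the generating trivial cofibrations $J_\mathrm{Cat}$, so properties (ii) and (iv) do not ``follow from the $\NI$-generator''; once $I$ and $\cW$ are fixed, the fibrations are whatever they are, and identifying the fibrant objects as the Segal spaces and the fibrations between them as the isofibrations is a separate theorem. This is why the paper instead invokes the result of \cite{GMSV} (\cref{thm:main}), which takes as input both $\cI$ and a set $\cJ$ containing $F[0]\hookrightarrow\NI$ (\cref{notn:J}) together with the class $\cW_f$ of Dwyer--Kan equivalences between Segal spaces, and returns (ii)--(iv) directly. The real content, absent from your sketch, is the verification of its hypotheses: the lifting characterization of isofibrations via Joyal's results on quasi-categories (\cref{lem:isofib}), the path object $X^{\NI}$ together with \cref{technicalisofib} and \cref{prop:path}, the identification of isofibrations that are Dwyer--Kan equivalences with $\cI$-fibrations (\cref{proofof4}), and above all the statement that every $\cI$-fibration admits a Segal-fibrant replacement that is a Dwyer--Kan equivalence---which is precisely the assertion your proposal leaves unproven.
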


This model structure is very similar to the categorical model structure on categories: the cofibrations, fibrations, and weak equivalences between Segal spaces are homotopical analogues of functors that are injective on objects, isofibrations, and equivalences of categories, respectively. Furthermore, it provides, as desired, a model of $(\infty,1)$-categories, as it is Quillen equivalent to the complete Segal space model structure. In fact, it sits somewhere between Segal categories and complete Segal spaces, by the following combination of \cref{thm:idQE,LIQE,cor:rightind}. 

\begin{theoremA}
    The following functors are right Quillen equivalences
    \begin{tz}
        \node[](1) {$\Dsset_\mathrm{CSS}$}; 
        \node[right of=1,xshift=1.5cm](2) {$\Dsset_\mathrm{Cat}$}; 
        \node[right of=2,xshift=1.5cm](3) {$\cP\cat(\sset)$}; 
        \node[right of=3,xshift=1.5cm](4) {$\Dsset_\mathrm{Cat}$};

        \draw[->](1) to node[above,la]{$\id$} (2); 
        \draw[->](2) to node[above,la]{$R$} (3); 
        \draw[right hook->](3) to (4);
    \end{tz}
    where $R$ is the right adjoint of the inclusion, $\cP\cat(\sset)$ is endowed with the Segal category model structure, and $\Dsset_\mathrm{CSS}$ denotes the complete Segal space model structure on $\Dsset$. 

    Moreover, the model structure on $\cP\cat(\sset)$ is left- and right-induced from $\Dsset_\mathrm{Cat}$ along the inclusion $\cP\cat(\sset)\hookrightarrow \Dsset$. 
\end{theoremA}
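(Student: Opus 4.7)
The plan is to first establish the identity Quillen equivalence between the two ambient model structures on $\Dsset$, then identify the Segal category model structure as being simultaneously left- and right-induced from $\Dsset_\mathrm{Cat}$ along the inclusion, and finally deduce the remaining two Quillen equivalences from this inducedness.

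For $\id\colon\Dsset_\mathrm{CSS}\to\Dsset_\mathrm{Cat}$, I would show that the identity in the opposite direction is left Quillen. On cofibrations this is immediate, since the Cat-cofibrations are in particular monomorphisms, which are exactly the CSS-cofibrations. The crucial comparison is of weak equivalences: by \cref{thmA:MS}, a Cat weak equivalence is a map inducing a Dwyer--Kan equivalence on Segal fibrant replacements, while a CSS weak equivalence is a map inducing a levelwise equivalence on complete Segal space fibrant replacements. Since the completion of a Segal space is a Dwyer--Kan equivalence to a complete Segal space, and Dwyer--Kan equivalences between complete Segal spaces are exactly levelwise equivalences, the two classes coincide. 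As the two model structures share their weak equivalences, the identity adjunction is automatically a Quillen equivalence.

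To establish the left- and right-inducedness of the Segal category model structure on $\cP\cat(\sset)$, I would show that its cofibrations, fibrations, and weak equivalences all coincide with the restrictions of the respective Cat classes. A monomorphism $f\colon X\hookrightarrow Y$ in $\cP\cat(\sset)$ satisfies the Cat cofibration condition because $Y_0\setminus f_0(X_0)$ furnishes the required set $R$ with $X_0\amalg R\cong Y_0$; conversely, every Cat cofibration between objects of $\cP\cat(\sset)$ is visibly a monomorphism. Between fibrant objects, which are Segal categories in both structures, the fibrations and weak equivalences are the isofibrations and the Dwyer--Kan equivalences in either case. A standard uniqueness argument for cofibrantly generated model structures then shows that all three classes agree throughout $\cP\cat(\sset)$, so the Segal category model structure is both left- and right-induced from $\Dsset_\mathrm{Cat}$ along the inclusion.

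The inducedness implies that the inclusion $i$ preserves and reflects cofibrations, fibrations, and weak equivalences, so it is both left and right Quillen; hence its right adjoint $R$ is right Quillen and its left adjoint $L$ is left Quillen. For the Quillen equivalence of $i\dashv R$: the unit $X\to Ri(X)$ is an isomorphism since $i$ is fully faithful, and for the counit $\epsilon_Y\colon iR(Y)\to Y$ at a Segal space $Y$, the explicit description of $R(Y)$ (selecting all vertices of $Y_0$ and preserving mapping spaces by a pullback over $Y_0$) shows that $\epsilon_Y$ is essentially surjective and fully faithful, and hence a Cat weak equivalence after Segal fibrant replacement of $iR(Y)$. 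The Quillen equivalence of $L\dashv i$, which makes $i$ itself a right Quillen equivalence, follows symmetrically, using that on cofibrant objects of $\Dsset_\mathrm{Cat}$ the $0$-level is already weakly equivalent to a set, so that the unit $X\to iL(X)$ is a Cat weak equivalence.

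The main obstacle I anticipate is the dual identification of weak equivalences across the three model structures: that the Cat and CSS weak equivalences coincide, and that the Segal category weak equivalences agree with the restriction of Cat weak equivalences to $\cP\cat(\sset)$. Both comparisons require careful tracking of how Segal replacement, completion, and the coreflection $R$ interact, and both rest on the characterization of Cat weak equivalences between Segal spaces as Dwyer--Kan equivalences established in \cref{thmA:MS}.
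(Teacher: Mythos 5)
Your first step is fine in substance: the weak equivalences of $\Dsset_\mathrm{Cat}$ and $\Dsset_\mathrm{CSS}$ do coincide (one inclusion is \cref{lem:weinCSStoSegal}; the other uses that all maps in $\cJ$ are trivial cofibrations in $\Dsset_\mathrm{CSS}$ together with \cref{thm:MSCSS}), and ``Quillen pair with the same weak equivalences'' does give the identity Quillen equivalence; this is a legitimate alternative to the paper's derived unit/counit argument in \cref{thm:idQE}. The genuine gap is in your ``inducedness first'' strategy for $\cP\cat(\sset)$. First, you use as an input that the fibrations between fibrant objects of the Segal category model structure are the isofibrations; this is not available beforehand---in the paper it is a \emph{corollary} of \cref{cor:rightind}, so quoting it here is circular (and, as it happens, unnecessary if you only compare cofibrations and weak equivalences). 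Second, and more seriously, the right-induced half of the claim does not follow from any ``standard uniqueness argument''. Matching the cofibrations (monomorphisms) and the weak equivalences determines the Bergner model structure uniquely, but it only yields the easy implication: if $I(f)$ is a fibration in $\Dsset_\mathrm{Cat}$ then $f$ is a Bergner fibration. The converse---that $I$ \emph{preserves} fibrations---is, by adjointing lifting problems along $L\dashv I$, precisely the statement that $L$ carries trivial cofibrations of $\Dsset_\mathrm{Cat}$ to trivial cofibrations of $\cP\cat(\sset)$, i.e.\ that $L\dashv I$ is a Quillen pair, which is one of the facts you intended to \emph{deduce} from inducedness. Nothing in your sketch supplies this, and it is where the real content lies: the paper proves it by computing $L$ on the generating cofibrations of $\cP\cat(\sset)$ and on the set $\cJ$ (via \cref{fibreplinpcat}), then obtains the two Quillen equivalences from Bergner's equivalence $I\dashv R$ with $\Dsset_\mathrm{CSS}$ (\cref{IRQE}), the commuting triangle of right Quillen functors, and 2-out-of-3 for Quillen equivalences, and only afterwards deduces inducedness from the abstract lemma about fully faithful functors that are both left and right Quillen equivalences.

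Two smaller points in the same part of your argument also need repair. The identification of the Bergner weak equivalences with the restriction of the $\Dsset_\mathrm{Cat}$ weak equivalences cannot be read off from agreement between fibrant objects alone; you need that a Bergner fibrant replacement (built from the maps in \cref{fibreplinpcat}, which lie in $\cJ_0$) also serves as a $\cJ$-fibrant replacement in $\Dsset_\mathrm{Cat}$---you acknowledge this in your last paragraph but do not supply it. And for the Quillen equivalence $I\dashv R$, the isomorphism of the \emph{underived} unit is not enough; you must control the derived unit $X\to R\widetilde{IX}$, e.g.\ by combining the triangle identity with the fact that the counit $IRY\to Y$ is a Dwyer--Kan equivalence for every Segal space $Y$ and that $I$ reflects weak equivalences. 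These are fixable, but together with the missing proof that $L$ is left Quillen they mean the proposal as written does not establish the theorem.
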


Moreover, the categorical model structure retains the desirable properties of the model structure for complete Segal spaces.
The following result is a combination of \cref{thm:cartclosed,thm:leftproper}. 

\begin{theoremA}
    The model category $\Dsset_\mathrm{Cat}$ is left proper and cartesian closed. 
\end{theoremA}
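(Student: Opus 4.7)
Both properties will be deduced from the corresponding facts for Rezk's complete Segal space model structure $\Dsset_\mathrm{CSS}$ by comparison through the identity Quillen equivalence from the preceding theorem. The central observation is that the weak equivalences of $\Dsset_\mathrm{Cat}$ agree with those of $\Dsset_\mathrm{CSS}$: for any map $f$, its $\Dsset_\mathrm{Cat}$-fibrant replacement is a map of Segal spaces, its $\Dsset_\mathrm{CSS}$-fibrant replacement can be obtained from this by Rezk completion, and a map of Segal spaces is a Dwyer--Kan equivalence if and only if its completion is a Reedy equivalence of complete Segal spaces. The cofibrations of $\Dsset_\mathrm{Cat}$ are moreover contained in those of $\Dsset_\mathrm{CSS}$ (all monomorphisms), consistent with the identity being left Quillen in the other direction.

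Given these facts, left properness of $\Dsset_\mathrm{Cat}$ is immediate: for any cofibration $f\colon X\to Y$ and weak equivalence $g\colon X\to Z$ in $\Dsset_\mathrm{Cat}$, the pushout $Z\to Z\cup_X Y$, computed in the common underlying category $\Dsset$, is a $\Dsset_\mathrm{CSS}$-weak equivalence by left properness of $\Dsset_\mathrm{CSS}$, and therefore a $\Dsset_\mathrm{Cat}$-weak equivalence.

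For cartesian closedness, I verify the pushout-product axiom. Given cofibrations $f\colon X\to Y$ and $g\colon A\to B$ in $\Dsset_\mathrm{Cat}$ with witnessing weak equivalences $X_0\amalg R \xrightarrow{\simeq} Y_0$ and $A_0\amalg S \xrightarrow{\simeq} B_0$ for sets $R, S$, the pushout-product $f\square g$ is a monomorphism by cartesian closedness of $\Dsset_\mathrm{CSS}$. For the level-$0$ condition, products and pushouts in $\Dsset$ are computed levelwise, and a direct computation using distributivity of cartesian product over coproducts in $\sset$ shows that $R \times S$ serves as the required complementary set, exhibiting $f\square g$ as a cofibration in $\Dsset_\mathrm{Cat}$. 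If moreover $f$ (say) is a weak equivalence, then it is also a trivial cofibration in $\Dsset_\mathrm{CSS}$, and hence $f\square g$ is a trivial cofibration in $\Dsset_\mathrm{CSS}$ by its cartesian closedness, so in particular a weak equivalence there and therefore in $\Dsset_\mathrm{Cat}$. The main obstacle will be establishing carefully the agreement of weak equivalences, which should nevertheless follow cleanly from the description of $\Dsset_\mathrm{Cat}$-fibrant replacements and Rezk completion.
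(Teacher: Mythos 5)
Your proposal is correct in substance, but it takes a genuinely different route from the paper. You deduce both properties from $\Dsset_\mathrm{CSS}$ via the key claim that $\Dsset_\mathrm{Cat}$ and $\Dsset_\mathrm{CSS}$ have the same weak equivalences. That identification is true, and your sketch is essentially the right argument: one inclusion is exactly \cref{lem:weinCSStoSegal}, and the converse follows by passing to a $\cJ_0$-fibrant replacement (\cref{lem:cJ0}), which is a map of Segal spaces and hence a Dwyer--Kan equivalence, then invoking \cref{thm:MSCSS}~(iii) and the fact that $\cJ$-cofibrations are trivial cofibrations in $\Dsset_\mathrm{CSS}$ — but note the paper never states this two-sided comparison, so you must prove it; your detour through Rezk completion is just a repackaging of \cref{thm:MSCSS}~(iii). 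Granting it, your left properness argument is a one-line transfer, whereas the paper uses only \cref{lem:weinCSStoSegal}: it replaces the target by a Segal space, factors the weak equivalence as a $\cJ$-cofibration followed by a $\cJ$-fibration, and applies left properness of $\Dsset_\mathrm{CSS}$ only to a Dwyer--Kan equivalence between Segal spaces. For cartesian closedness, your level-$0$ computation with $R\times S$ is literally the paper's cofibration argument via \cref{Icof}; for the trivial case you import cartesian closedness of $\Dsset_\mathrm{CSS}$ (Rezk), an input the paper avoids by treating $\cJ_0$-cofibrations through cartesian closedness of $\Dsset_\mathrm{Seg}$ and \cref{levelwiseDK}, and $F[0]\hookrightarrow \NI$ through \cref{technicalisofib,prop:path}. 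Your route is shorter and handles arbitrary trivial cofibrations directly; the paper's is more self-contained and reuses its isofibration technology.
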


These properties of the categorical model structure often allow one to deal with Segal spaces directly, without having to complete them. As a first example, recall that the nerve of a category defines a (discrete) Segal space which is generally not complete. As a consequence, the nerve does not provide an appropriate homotopical functor into the complete Segal space model structure. However, the nerve does induce a right Quillen functor from the categorical model structure on categories to the model structure on simplicial spaces from \cref{thmA:MS}, as we show in \cref{prop:nerve}. As a second example, the categorical model structure can be used to compute  pullbacks of $(\infty,1)$-categories---and more generally any limit of $(\infty,1)$-categories---modeled by non-complete Segal spaces, in a way similar to the computation of homotopy pullbacks of ordinary categories. Explicitly, we prove in \cref{prop:pullback} that the pullback of a map between Segal spaces along an isofibration is already a homotopy pullback in the complete Segal space model structure.

\subsection*{Acknowledgments}
We are grateful to João Candeias and to the referee for their careful reading of the paper and their useful comments. During the realization of this work, the first author was a member of the Collaborative Research Centre ``SFB 1085: Higher
Invariants'' funded by the DFG. The second author was supported by the CNRS, under the program
Projet Exploratoire de Premier Soutien ``Jeune chercheuse, jeune chercheur'' (PEPS
JCJC).

\section{Segal spaces, Dwyer--Kan equivalences, and isofibrations}

In this section, we provide the necessary background on Segal spaces and $(\infty,1)$-categorical notions of equivalences and fibrations between them. In \cref{sec:CSS}, we recall the notion of Segal spaces, as well as their associated model structure on simplicial spaces. Next, in \cref{sec:DKeq}, we recall the definition of Dwyer--Kan equivalences between Segal spaces and establish a useful lemma about them. Finally, in \cref{sec:isofib}, we introduce a notion of \emph{isofibrations} between Segal spaces and prove some technical results related to them.

Throughout the paper, we will make use of the following notations. We denote by $\sset$ the category of simplicial sets. 
We consider the category $\Dsset$ of simplicial objects in~$\sset$, i.e., functors $\Dop\to \sset$, which we refer to as \emph{simplicial spaces}. 

\begin{notation}
    For $m\geq 0$, we will write $F[m]\colon \Dop\to \sset$ for the representable in the \emph{categorical direction}, sending an object $[k]\in \Delta$ to the constant simplicial set at the set $\Delta([k],[m])$. We denote by $\partial F[m]$ its boundary and by $\Sp[m]$ its spine. 
    
    For $n\geq 0$, we write $\Delta[n]\colon \Dop\to \sset$ for the representable in the \emph{space direction}, given by the constant functor at the representable $\Delta[n]\in \sset$. We denote by $\partial\Delta[n]$ its boundary and by $\Lambda^k[n]$ its $k$-horn, for $0\leq k\leq n$.
\end{notation}

\begin{notation}
    Given simplicial spaces $X,Y$, we denote by $\Map(X,Y)$ the \emph{mapping space} at $X,Y$ given by the simplicial set such that, for $n\geq 0$, 
    \[ \Map(X,Y)_n\cong \Dsset(X\times \Delta[n],Y), \]
    and by $Y^X$ the \emph{internal hom} at $X,Y$ given by the simplicial space such that, for $m,n\geq 0$,
    \[ (Y^X)_{m,n}\cong \Dsset(X\times F[m]\times \Delta[n],Y). \]
\end{notation}

\subsection{Segal spaces} \label{sec:CSS}

Throughout the paper, the category $\sset$ is endowed with the Kan--Quillen model structure for Kan complexes. The category $\Dsset$ can then be endowed with the \emph{Reedy model structure}, denoted by $\Dsset_\mathrm{Reedy}$, in which the cofibrations are the monomorphisms, and the weak equivalences and (trivial) fibrations are as follows: a map $X\to Y$ in $\Dsset$ is
\begin{rome}
    \item a \emph{levelwise weak equivalence} if, for every $m\geq 0$, the induced map $X_m\to Y_m$ is a weak equivalence in $\sset$, 
    \item  a \emph{Reedy (trivial) fibration} if, for every $m\geq 0$, the $m$-th relative matching map
    \[ X_m\to Y_m\times_{\Map(\partial F[m],Y)} \Map(\partial F[m],X) \]
    is a (trivial) fibration in $\sset$. 
\end{rome}

\begin{defn}
    A simplicial space $X\colon \Dop \to \sset$ is a \emph{Segal space} if it is Reedy fibrant and, for each $m\geq 2$, the Segal map 
        \[ X_m\cong \Map(F[m],X)\to \Map(\Sp [m],X)\cong X_1\times_{X_0}\ldots \times_{X_0} X_1 \]
        is a weak equivalence in $\sset$.
\end{defn}

By localizing the Reedy model structure, one obtains a model structure on $\Dsset$ for Segal spaces. The following appears as \cite[Theorem 7.1]{Rezk}.

\begin{theorem} \label{thm:MSSegal}
    There is a cofibrantly generated, cartesian closed model structure on the category $\Dsset$, which we denote by $\Dsset_{\mathrm{Seg}}$, in which 
    \begin{rome}
        \item the cofibrations are the monomorphisms, 
        \item the fibrant objects are the Segal spaces, 
        \item the weak equivalences (resp.~fibrations) between Segal spaces are the levelwise weak equivalences (resp.~Reedy fibrations).
    \end{rome}
\end{theorem}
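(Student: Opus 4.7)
The plan is to construct the model structure by left Bousfield localization of the Reedy model structure $\Dsset_\mathrm{Reedy}$ at the set of Segal inclusions
\[ S=\{\Sp[m]\hookrightarrow F[m]\mid m\geq 2\}. \]
First, I would record that $\Dsset_\mathrm{Reedy}$ is combinatorial (with generating cofibrations $\partial F[m]\times \Delta[n]\cup F[m]\times\partial\Delta[n]\hookrightarrow F[m]\times\Delta[n]$ and the analogous generating trivial cofibrations using horns) and left proper, the latter because cofibrations are monomorphisms so every object is cofibrant.

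Next, I would invoke the existence theorem for left Bousfield localization of a combinatorial left proper model category (Smith, Barwick, or Hirschhorn) to obtain the desired model structure $\Dsset_\mathrm{Seg}$ on $\Dsset$ with the same cofibrations as $\Dsset_\mathrm{Reedy}$, whose fibrant objects are the $S$-local Reedy fibrant objects. Since all the simplicial spaces appearing in $S$ are Reedy cofibrant, a Reedy fibrant object $X$ is $S$-local if and only if $\Map(F[m],X)\to \Map(\Sp[m],X)$ is a weak equivalence for every $m\geq 2$, i.e., $X$ is a Segal space. By general Bousfield localization theory, the weak equivalences (resp.\ fibrations) between fibrant objects of the localization coincide with those of the original model structure, giving parts (ii)--(iii).

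The main obstacle is the cartesian closedness. By standard pushout-product arguments, it is enough to verify that for any monomorphism $i\colon A\hookrightarrow B$ and any Segal inclusion $j\colon \Sp[m]\hookrightarrow F[m]$ in $S$, the pushout-product $i\square j$ is a trivial cofibration in $\Dsset_\mathrm{Seg}$, together with the analogous statement for generating trivial cofibrations of the Reedy structure (which follows because the Reedy model structure is already cartesian and $\Dsset_\mathrm{Seg}$ has the same cofibrations and more weak equivalences). For the interaction with the Segal maps, I would filter $i$ as a transfinite composition of pushouts of generating cofibrations, reducing to the case $i\colon \partial F[p]\times\Delta[q]\cup F[p]\times\partial\Delta[q]\hookrightarrow F[p]\times \Delta[q]$; here one analyzes the map $i\square j$ by dualizing to show that it induces a weak equivalence on mapping spaces into any Segal space, using the Segal condition to decompose $\Map(F[p]\times F[m],X)$ into mapping spaces out of spines and iterated homotopy pullbacks of $X_1$'s over $X_0$'s.

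Finally, to complete the identification of fibrant objects with Segal spaces, I would verify that the localization yields exactly the Segal condition (done above), and to confirm cofibrant generation, I would take as generating set of trivial cofibrations the Reedy generating trivial cofibrations together with horn-inclusion replacements of the Segal maps $S$, which is a standard by-product of the combinatorial localization construction.
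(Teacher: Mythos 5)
The paper does not prove this statement itself: it is recalled verbatim from \cite[Theorem 7.1]{Rezk}, so the relevant comparison is with Rezk's original argument, and your plan is essentially that argument --- left Bousfield localization of the combinatorial, left proper Reedy model structure at the spine inclusions, identification of the local Reedy fibrant objects with the Segal spaces, and the general facts that the localization keeps the cofibrations and that weak equivalences and fibrations between local fibrant objects agree with the Reedy ones. All of that part of your outline is correct.

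Two caveats on the parts you compress. First, cartesian closedness is indeed the only real work, but your sketch hides its entire content. Your claimed sufficiency (checking $i\square j$ for $i$ a monomorphism and $j\in S$) is true, but not by a naive generation argument --- the trivial cofibrations of the localization are \emph{not} generated by $S$ together with the Reedy trivial cofibrations; the correct route is the standard criterion that it suffices that $A\times \Sp[m]\to A\times F[m]$ be a local equivalence for every $A$, equivalently that $X^A$ be again a Segal space for every Segal space $X$ (your case $i\colon\emptyset\hookrightarrow A$). Reducing to $A=F[p]\times\Delta[q]$ is fine and the $\Delta[q]$-direction is harmless, but the key combinatorial lemma --- that $\Map(F[p]\times F[m],X)\to \Map(F[p]\times\Sp[m],X)$ is a weak equivalence for a Segal space $X$ --- requires an induction over a decomposition of $F[p]\times F[m]$ into copies of $F[p+m]$ glued along faces (Rezk's Section 10); your one-line appeal to ``decomposing into spines and iterated homotopy pullbacks'' is exactly where that argument would have to be supplied. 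Second, your closing claim about explicit generating trivial cofibrations is wrong as stated: the Reedy generating trivial cofibrations together with horn-type replacements of the maps in $S$ detect the fibrant objects and the fibrations between fibrant objects, but they do not generate all trivial cofibrations of $\Dsset_{\mathrm{Seg}}$ --- this distinction is precisely the role of the set $\cJ_0$ in \cref{rem:J0fib} of the present paper. Cofibrant generation of the localization is instead an abstract output of Smith's theorem, so the theorem itself is unaffected, but you should not present that explicit set as a generating set.
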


\subsection{Dwyer--Kan equivalences} \label{sec:DKeq}
We now recall the notion of Dwyer--Kan equivalences between Segal spaces. For this, we first review the following constructions.

\begin{defn}
    Let $X$ be a Segal space.
    \begin{rome}
        \item For $x,y\in X_{0,0}$, the \emph{mapping space} $\map_X(x,y)$ is the (homotopy) fiber at $(x,y)$ of the fibration $(d_1,d_0)\colon X_1\twoheadrightarrow X_0\times X_0$ in $\sset$.
        \item The \emph{homotopy category} of $X$ is the category $\ho X$ with object set $X_{0,0}$, and hom set at $x,y\in X_{0,0}$ given by 
        \[ \ho X(x,y)\coloneqq \pi_0\map_X(x,y), \]
        where $\pi_0\colon \sset\to \set$ is the left adjoint of the canonical inclusion $\set\hookrightarrow \sset$. Composition is induced by the Segal condition; see \cite[Proposition 5.4]{Rezk}. 
    \end{rome}
\end{defn}

\begin{defn}
    A map $f\colon X\to Y$ between Segal spaces is a \emph{Dwyer--Kan equivalence}~if 
    \begin{enumerate}
        \item it is \emph{homotopically fully faithful}, i.e., for all $x,y\in X_{0,0}$, the induced map
        \[\map_X(x,y)\to \map_Y(fx,fy) \]
        is a weak equivalence in $\sset$, and
        \item the induced functor  $\ho X\to \ho Y$ is essentially surjective on objects.
    \end{enumerate}
\end{defn}

We give an alternative description of the homotopy category in terms of a categorification. 

\begin{rem} \label{nerve}
    Recall that there is a canonical inclusion $\Delta\subseteq \cat$ into the category of categories. This yields by left Kan extension along the Yoneda embedding an adjunction 
    \[ c\colon\Dsset \rightleftarrows \cat\cocolon N \]
    which we will refer to as the \emph{categorification--nerve adjunction}. 
    In particular, given a category~$\cC$, its nerve $N\cC$ is given at $m\geq 0$ by the discrete simplicial set $(N\cC)_m\cong \cat([m],\cC)$. Since every simplicial set is Reedy fibrant as a simplicial space, the nerve $N\cC$ is a Segal space. 
\end{rem}

\begin{notation} \label{not:cosk}
    We write $(-)_0\colon \Dsset\rightleftarrows \sset\cocolon \cosk_0$ for the adjunction whose left adjoint sends a simplicial space $X$ to its underlying space $X_0$, and whose right adjoint sends $K\in \sset$ to the simplicial space $\cosk_0(K)$ given at 
    $m\geq 0$ by $\cosk_0(K)_m\cong K^{\times (m+1)}$.
\end{notation}

\begin{rem}\label{rem:reduction}
    We denote by $\cP\cat(\sset)$ the full subcategory of $\Dsset$ spanned by the simplicial spaces $X$ such that $X_0$ is a set. The canonical inclusion $\cP\cat(\sset)\hookrightarrow \Dsset$ admits a right adjoint $R\colon \Dsset\to \cP\cat(\sset)$, sending a simplicial space $X$ to the pullback in~$\Dsset$
    \begin{tz}
        \node[](1) {$RX$}; 
        \node[below of=1](2) {$X$}; 
        \node[right of=1,xshift=1cm](3) {$\mathrm{cosk}_0 (X_{0,0})$}; 
        \node[below of=3](4) {$\mathrm{cosk}_0 (X_{0})$}; 
        \pullback{1};
        \draw[right hook->](1) to (2);
        \draw[->](1) to (3);
        \draw[right hook->](3) to node[right,la]{} (4); 
        \draw[->](2) to node[below,la]{} (4);
    \end{tz}
\end{rem}

\begin{lemma} \label{lem:charhtpycat}
    For a Segal space $X$, there is a natural isomorphism of categories $c(RX)\cong \ho X$.
\end{lemma}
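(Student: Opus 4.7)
My strategy is to use the universal property of $c$ together with the fact that nerves of categories are discrete in the space direction. Concretely, the nerve $N\colon\cat\to\Dsset$ factors as $\cat\xrightarrow{N_\sset}\sset\xrightarrow{d}\Dsset$, where $N_\sset$ is the classical nerve and $d$ sends a simplicial set $K$ to the simplicial space with $(dK)_{m,n}=K_m$, constant in the space direction. Taking left adjoints, $c$ factors as $c_\sset\circ \pi_0$, where $c_\sset$ is the left adjoint of $N_\sset$ and $\pi_0\colon\Dsset\to\sset$ is levelwise connected components, $(\pi_0 Y)_m=\pi_0(Y_m)$. Since $N_\sset$ is fully faithful, it therefore suffices to construct a natural isomorphism of simplicial sets $\pi_0(RX)\cong N_\sset(\ho X)$.

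To compute $\pi_0((RX)_m)$, I use the description of $(RX)_m$ as the strict pullback $X_m\times_{X_0^{m+1}}X_{0,0}^{m+1}$ from \cref{rem:reduction}. Since $X$ is Reedy fibrant and the inclusion $\bigsqcup_{i=0}^m F[0]\hookrightarrow F[m]$ is a cofibration, the map $X_m\twoheadrightarrow X_0^{m+1}$ is a fibration in $\sset$, so pulling back along the inclusion $X_{0,0}^{m+1}\hookrightarrow X_0^{m+1}$ exhibits $(RX)_m$ as a disjoint union of homotopy fibers, one over each $(x_0,\ldots,x_m)\in X_{0,0}^{m+1}$. By the Segal condition, combined with the fact that $(d_1,d_0)\colon X_1\twoheadrightarrow X_0\times X_0$ is a fibration (so that $X_1\times_{X_0}\cdots\times_{X_0}X_1$ models the homotopy pullback), the homotopy fiber at $(x_0,\ldots,x_m)$ is weakly equivalent to $\map_X(x_0,x_1)\times\cdots\times \map_X(x_{m-1},x_m)$. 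Applying $\pi_0$, which preserves finite products and disjoint unions, yields the desired isomorphism
\[ \pi_0((RX)_m)\;\cong\;\coprod_{(x_0,\ldots,x_m)\in X_{0,0}^{m+1}} \ho X(x_0,x_1)\times\cdots\times \ho X(x_{m-1},x_m)\;=\;N_\sset(\ho X)_m. \]

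What remains is to check compatibility of these isomorphisms with the simplicial operators in $m$, which reduces to identifying the induced map on $\pi_0$ of $d_1\colon (RX)_2\to (RX)_1$ with composition in $\ho X$; this is exactly how Rezk defines composition in the homotopy category in \cite[Proposition~5.4]{Rezk}, and the remaining faces and degeneracies correspond to evident projections and identity insertions in $N_\sset(\ho X)$. Naturality in $X$ is automatic from the functoriality of each construction. I anticipate that the main technical point will be the identification of the strict fibers of $X_m\to X_0^{m+1}$ with products of mapping spaces via the Segal condition; everything beyond that is a formal matter of unwinding the adjunctions and Rezk's definition of $\ho X$.
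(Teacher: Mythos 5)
Your argument is correct, but it takes a genuinely different route from the paper. You compute the categorification explicitly: after factoring $c\cong c_\sset\circ\pi_0$ through levelwise $\pi_0$, you identify $\pi_0(RX)$ with $N_\sset(\ho X)$ degreewise, using that $(RX)_m$ splits as a coproduct of strict (hence homotopy) fibers of the fibration $X_m\twoheadrightarrow X_0^{\times(m+1)}$ and that the Segal map identifies each fiber, after $\pi_0$, with a product of hom-sets of $\ho X$. The paper instead argues by corepresentability: by adjunction it suffices to identify maps $RX\to N\cC$ with functors $\ho X\to \cC$, which is done using that $N\cC$ is $2$-coskeletal with discrete hom-spaces together with $RX_1=\coprod_{x,y}\map_X(x,y)$; this avoids all levelwise computations and all simplicial identities, since compatibility with composition is the only datum left to match. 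Your approach yields slightly more information (an explicit natural isomorphism of simplicial sets $\pi_0(RX)\cong N_\sset(\ho X)$), but at the price of the checks you defer at the end: strictly one must verify commutation with every inner face $d_i\colon (RX)_m\to (RX)_{m-1}$, not only $d_1$ in degree $2$; this does reduce, via the $(i-1,i,i+1)$-face of a simplex and the spine description of your bijections, to Rezk's definition of composition and the unitality/associativity statements of his Proposition 5.4, so the reduction you assert is legitimate, just worth spelling out. Both proofs ultimately rest on the same two inputs: the fiberwise content of the Segal condition (as in \cref{charDK}) and the pullback description of $RX$ from \cref{rem:reduction}.
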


\begin{proof}
To provide the natural isomorphism $c(RX)\xrightarrow{\cong} \ho X$, it suffices by adjunction to construct a natural bijection between maps $RX\to N\cC$ in $\Dsset$ and functors $\ho X\to \cC$ in $\cat$. Using that $N\cC$ is $2$-coskeletal by \cite[Corollary 1.2]{JoyalVolumeII} and that $RX_1=\coprod_{x, y\in X_{0, 0}} \map_X(x, y)$, a map $RX\to N\cC$ is uniquely determined by a map $f\colon X_{0, 0}\to \mathrm{Ob}(\cC)$ together with maps of simplicial sets $f_{x, y}\colon \map_X(x, y)\to \cC(fx, fy)$ that are compatible with composition. Each space $\cC(x, y)$ is discrete, so the maps $f_{x, y}$ are uniquely determined by maps $\pi_0\map_X(x, y)\to \cC(fx, fy)$ that are compatible with composition. This is precisely the data of a functor $\ho X \to \cC$.
\end{proof}

We finally state the following characterization of homotopically fully faithfulness.

\begin{lemma} \label{charDK} 
  Let $f\colon X\to Y$ be a map between Segal spaces. The following are equivalent:
  \begin{rome}
  \item the map $f$ is homotopically fully faithful,
  \item the induced map $X_1\to Y_1\times_{Y_0^{\times 2}} X_0^{\times 2}$ is a weak equivalence in $\sset$,
  \item the map $X\to Y\times_{\cosk_0(Y_0)} \cosk_0(X_0)$ in 
  $\Dsset$ is a levelwise weak equivalence, 
 \item for all $m\geq 1$, the induced map 
 $X_m \to Y_m\times_{\Map(\partial F[m], Y)} \Map(\partial F[m], X)$ is a weak equivalence in $\sset$.
 \end{rome}
\end{lemma}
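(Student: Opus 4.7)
The plan is to show (i) $\Leftrightarrow$ (ii) and (ii) $\Leftrightarrow$ (iii) $\Leftrightarrow$ (iv), where (ii) will appear as the level-$1$ instance of (iii). For (i) $\Leftrightarrow$ (ii), Reedy fibrancy applied at level $m = 1$ (using $\partial F[1] = F[0] \amalg F[0]$) implies that both $(d_1, d_0)\colon X_1 \to X_0 \times X_0$ and the analogous map for $Y$ are Kan fibrations, so the map in (ii) is a morphism between Kan fibrations over $X_0 \times X_0$ whose restriction over $(x, y)$ is $\map_X(x, y) \to \map_Y(fx, fy)$. Since a map between Kan fibrations with a common base is a weak equivalence iff it is fibrewise so, this yields the equivalence.

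To handle (iii) and (iv), set $Z := Y \times_{\cosk_0(Y_0)} \cosk_0(X_0)$ and let $f'\colon X \to Z$ be the induced map, so (iii) asserts $f'$ is a levelwise weak equivalence. Since $\cosk_0(K)$ has all matching maps isomorphisms, the map $Y \to \cosk_0(Y_0)$ is a Reedy fibration (its $m$-th relative matching map agrees with the $m$-th absolute matching map of $Y$), so $Z$ is Reedy fibrant. Using that $\partial F[m]_0 = F[m]_0$ is the constant simplicial set on $m+1$ points, one has $\Map(\partial F[m], \cosk_0(K)) = K^{\times (m+1)}$, and a direct pullback computation gives the canonical isomorphism
\[ Z_m \times_{\Map(\partial F[m], Z)} \Map(\partial F[m], X) \;\cong\; Y_m \times_{\Map(\partial F[m], Y)} \Map(\partial F[m], X), \]
identifying the $m$-th relative matching map of $f'$ with that of $f$. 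Since $X$ and $Z$ are both Reedy fibrant, $f'$ is a levelwise weak equivalence iff all its relative matching maps are weak equivalences---this uses that $\Map(\partial F[m], -)$ preserves weak equivalences between Reedy fibrants together with right properness of $\sset$, plus an induction on $m$ in one direction. This establishes (iii) $\Leftrightarrow$ (iv).

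For (ii) $\Leftrightarrow$ (iii), the equalities $Z_0 = X_0$ and $Z_1 = Y_1 \times_{Y_0^{\times 2}} X_0^{\times 2}$ make (ii) the level-$1$ case of (iii), giving (iii) $\Rightarrow$ (ii) for free. For the converse, I first verify that $Z$ is a Segal space: since $Z_m = Y_m \times_{Y_0^{\times (m+1)}} X_0^{\times (m+1)}$ and
\[ Z_1 \times_{Z_0} \cdots \times_{Z_0} Z_1 \;=\; (Y_1 \times_{Y_0} \cdots \times_{Y_0} Y_1) \times_{Y_0^{\times (m+1)}} X_0^{\times (m+1)}, \]
the Segal map of $Z$ is a base change of the Segal map of $Y$ along $X_0^{\times (m+1)} \to Y_0^{\times (m+1)}$ between objects fibrant over $Y_0^{\times (m+1)}$, hence a weak equivalence by right properness. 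The commutative square comparing the Segal maps of $X$ and $Z$ then reduces the equivalence $X_m \to Z_m$ to the equivalence $X_1 \times_{X_0} \cdots \times_{X_0} X_1 \to Z_1 \times_{X_0} \cdots \times_{X_0} Z_1$, which is an iterated base change of the weak equivalence $X_1 \to Z_1$ (assumption (ii)) along the Kan fibrations $X_1 \to X_0 \times X_0$ and $Z_1 \to X_0 \times X_0$, and hence a weak equivalence.

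The main technical obstacle will be verifying that $Z$ inherits the Segal property and carrying out the bookkeeping of iterated base changes in (ii) $\Rightarrow$ (iii); everything else is formal, based on Reedy fibrancy and fibrewise-equivalence arguments in $\sset$.
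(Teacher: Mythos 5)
Your argument is correct, but it is organized differently from the paper's proof, so let me compare. The treatment of (i)$\Leftrightarrow$(ii) is the same in substance (fibrewise detection of equivalences over $X_0\times X_0$, which the paper phrases as a homotopy pullback criterion). The difference lies in the remaining equivalences: the paper proves (ii)$\Rightarrow$(iii),(iv) simultaneously by one induction on $m$, running the comparison through the fibration sequences $X_m\twoheadrightarrow \Map(\partial F[m],X)\twoheadrightarrow\Map(\Sp[m],X)\twoheadrightarrow X_0^{\times(m+1)}$, using the Segal condition, cancellation of homotopy pullbacks, and the fact that the vertex inclusions into $\Sp[m]$ and $\partial F[m]$ are iterated pushouts of $\partial F[k]\hookrightarrow F[k]$ with $k<m$. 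You instead make the object $Z=Y\times_{\cosk_0(Y_0)}\cosk_0(X_0)$ explicit, prove (iii)$\Leftrightarrow$(iv) as a general Reedy statement (the relative matching maps of $X\to Z$ agree with those of $f$ --- note this identification holds only for $m\geq 1$, the $m=0$ matching map of $X\to Z$ being $\id_{X_0}$, which is harmless since (iv) concerns $m\geq 1$), and prove (ii)$\Rightarrow$(iii) by checking that $Z$ is itself a Segal space whose Segal maps are base changes of those of $Y$, then applying $2$-out-of-$3$ and right properness. Both routes use the same toolkit (Reedy fibrancy, right properness, Segal conditions); yours isolates a reusable lemma about levelwise equivalences versus relative matching maps between Reedy fibrant objects and makes the role of $Z$ conceptual, at the cost of verifying the Segal condition for $Z$ and the iterated base-change bookkeeping, whereas the paper's single induction obtains (iii) and (iv) at once without either of these verifications. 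The standard facts you invoke (fibrewise criterion for maps of Kan fibrations, homotopy invariance of $\Map(\partial F[m],-)$ on Reedy fibrant objects, invariance of iterated fibre products over $X_0$ under fibrewise equivalences of fibrations) are all correct as used, so there is no gap.
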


\begin{proof}
    The equivalence between (i) and (ii) follows from the fact that the following commutative square is a homotopy pullback in $\sset$
    \begin{tz}
        \node[](1) {$X_1$}; 
        \node[right of=1,xshift=1cm](2) {$Y_1$}; 
        \node[below of=1](3) {$X_0\times X_0$}; 
        \node[below of=2](4) {$Y_0\times Y_0$}; 

        \draw[->](1) to (2); 
        \draw[->](3) to (4); 
        \draw[->>](1) to node[left,la]{$\langle 0,1\rangle ^*$} (3); 
        \draw[->>](2) to node[right,la]{$\langle 0,1\rangle ^*$} (4); 
    \end{tz}
    if and only if, for all $x,y\in X_{0,0}$, the induced map on fibers 
    $\map_X(x,y)\to \map_Y(fx,fy)$ is a weak equivalence in $\sset$. Note that condition (ii) is a direct consequence of conditions (iii) and~(iv) by considering the induced maps in simplicial degree $m=1$. To see that (ii) implies (iii) and (iv), it suffices to consider the induced maps in simplicial degrees $m\geq 2$. To this end, consider the following diagram in $\sset$.
        \begin{tz}
        \node[](1) {$X_m$}; 
        \node[right of=1,xshift=2cm](2) {$\Map(\partial F[m],X)$}; 
        \node[right of=2,xshift=2.7cm](3) {$\Map(\Sp[m],X)$}; 
        \node[right of=3, xshift=2cm](4) {$X_0^{\times (m+1)}$};

        \draw[->>](1) to (2); 
        \draw[->>](2) to (3);
        \draw[->>](3) to (4);

        \node[below of=1](1') {$Y_m$}; 
        \node[below of=2](2') {$\Map(\partial F[m],Y)$}; 
        \node[below of=3](3') {$\Map(\Sp[m],Y)$}; 
        \node[below of=4](4') {$Y_0^{\times (m+1)}$};

        \draw[->>](1') to (2'); 
        \draw[->>](2') to (3');
        \draw[->>](3') to (4');

        \draw[->](1) to (1');
        \draw[->](2) to (2');
        \draw[->](3) to (3');
        \draw[->](4) to (4');
    \end{tz}
    The horizontal maps are fibrations in $\sset$ since $X$ and $Y$ are Reedy fibrant. Consequently, condition (iii) is equivalent to the total rectangle being a homotopy pullback and condition~(iv) is equivalent to the left square being a homotopy pullback. We will prove these are homotopy pullbacks by induction on~$m$, the initial case $m=1$ being precisely (ii). The composite of the two leftmost squares is a homotopy pullback, since the top and bottom maps are both weak equivalences in $\sset$ by the Segal conditions on $X$ and $Y$. Furthermore, the right-hand square and the composition of the two rightmost squares are both homotopy pullback squares. Indeed, this follows from the fact that the monomorphisms
    \[ 
    \{0, \dots, m\}\hookrightarrow \Sp[m] \quad\text{and}\quad \{0, \dots, m\}\hookrightarrow \partial F[m]
    \]
    are both iterated pushouts of maps $\partial F[k]\hookrightarrow F[k]$ with $1\leq k<m$, and, by the inductive hypothesis, each such pushout of $\partial F[k]\hookrightarrow F[k]$ induces a homotopy pullback square. By the cancellation property of homotopy pullbacks, we get that the left-hand square and the total rectangle are homotopy pullbacks, as desired.
\end{proof}
\begin{ex} \label{levelwiseDK}
Every weak equivalence $f\colon X\to Y$ between Segal spaces in $\Dsset_\mathrm{Seg}$ is a Dwyer--Kan equivalence. Indeed, we can write $f$ as the composition of a Reedy trivial fibration $p\colon X'\to Y$ and a section of a Reedy trivial fibration $i\colon X\to X'$. Since $p$ is surjective on objects, it is a Dwyer--Kan equivalence by \cref{charDK}. The map $i$ is a Dwyer--Kan equivalence by 2-out-of-3.
\end{ex}

\subsection{Isofibrations} \label{sec:isofib}

Finally, let us introduce the following version of isofibrations between Segal spaces. For this we first recall that classically, an \emph{isofibration} is a functor with the right lifting property against either inclusion $[0]\hookrightarrow I[1]$, where $I[1]$ denotes the free-living isomorphism.

\begin{defn}
Let $X,Y$ be Segal spaces. A map $f\colon X\to Y$ is said to be an \emph{isofibration} if it is a Reedy fibration and the induced functor $\ho X\to \ho Y$ is an isofibration of categories.
\end{defn}

We aim to characterize these isofibrations through a lifting property. We will deduce this from an analogous result for quasi-categories due to Joyal. For this, recall that the homotopy category of a quasi-category $X$ can be modeled by $cX$, where $c\colon \set^{\Dop} \to \cat$ is the left adjoint of the usual nerve $N\colon \cat\to \set^{\Dop}$. We then have the following lemma relating the homotopy category of a Segal space and that of its underlying quasi-category. 

\begin{lemma} \label{lem:undqcat}
    Let $X,Y$ be Segal spaces and $X\to Y$ be a Reedy fibration. Then:
    \begin{rome}
        \item the underlying simplicial set $X_{-,0}$ is a quasi-category, 
        \item the induced map $X_{-,0}\to Y_{-,0}$ is an inner fibration between quasi-categories,
        \item the homotopy category of the quasi-category $X_{-,0}$ is naturally isomorphic to $\ho X$.
    \end{rome}
\end{lemma}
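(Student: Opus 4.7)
The plan is to pass through the adjunction $F\colon \sset \rightleftarrows \Dsset \cocolon (-)_{-,0}$, where the left adjoint sends a simplicial set $K$ to the discrete-in-the-space-direction bisimplicial set $F(K)_{m,n} = K_m$. Under this adjunction, an inner horn lifting problem against $X_{-,0}$ (or compatibly against $X_{-,0} \to Y_{-,0}$) transposes to the lifting problem for the monomorphism $F(\Lambda^k[m]) \hookrightarrow F[m]$ against $X$ (or against $X \to Y$) in $\Dsset$.

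For (i) and (ii), it therefore suffices to show that $F(\Lambda^k[m]) \hookrightarrow F[m]$ is a trivial cofibration in $\Dsset_{\mathrm{Seg}}$ for all $0<k<m$. It is visibly a monomorphism, hence a cofibration. For the weak-equivalence part, since $\Lambda^k[m]$ contains the $1$-skeleton of $\Delta[m]$, one has a factorization $\Sp[m] \hookrightarrow F(\Lambda^k[m]) \hookrightarrow F[m]$; the composite is a generating trivial cofibration of $\Dsset_{\mathrm{Seg}}$ (cf.~\cref{thm:MSSegal}), and the first inclusion is a Segal weak equivalence by a straightforward induction on $m$, exploiting that $F(\Lambda^k[m])$ is built from the spine by attaching lower-dimensional $F[j]$'s, each equivalent to its own spine. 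Two-out-of-three then gives the claim. Since $X$ is fibrant in $\Dsset_{\mathrm{Seg}}$, and a Reedy fibration between Segal spaces coincides with a fibration in $\Dsset_{\mathrm{Seg}}$ (fibrations between fibrant objects are unchanged under left Bousfield localization), the required lifts exist.

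For (iii), I combine \cref{lem:charhtpycat} (which gives $\ho X \cong c(RX)$) with the observation that $(RX)_{-,0} = X_{-,0}$, read off from the pullback definition in \cref{rem:reduction}. The canonical inclusion $F(X_{-,0}) \hookrightarrow RX$ in $\Dsset$, which at each level selects the space of $0$-simplices, then induces a comparison functor
\[ c(X_{-,0}) \cong c\bigl(F(X_{-,0})\bigr) \longrightarrow c(RX) \cong \ho X, \]
which is the identity on the common object set $X_{0,0}$. The Segal condition ensures that every $2$-simplex relation in $X_{-,0}$ descends to the composition law in $\ho X$, so this functor is well defined, and it is surjective on morphisms because each path component of the Kan complex $\map_X(x,y)$ contains a $0$-simplex representative. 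The main obstacle will be the injectivity step: given $\alpha, \beta \in X_{1,0}$ in the same path component of $\map_X(x,y)$---witnessed by a $1$-simplex $\gamma \in X_{1,1}$ in the fiber---I must produce a chain of $2$-simplex relations in $X_{2,0}$ identifying $\alpha$ with $\beta$. For this I would use that the Segal map $X_2 \twoheadrightarrow X_1 \times_{X_0} X_1$ is a trivial Reedy fibration (combining the Segal condition with Reedy fibrancy), so $\gamma$ lifts to a $1$-simplex of $X_2$ whose endpoints are $2$-simplices realizing the composites $\alpha \cdot \id_y$ and $\beta \cdot \id_y$; combining such lifts with analogous ones on the other side translates the path-component identification in $\map_X(x,y)$ into a chain of 2-simplex relations in $X_{-,0}$.
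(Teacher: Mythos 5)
Parts (i) and (ii) of your argument are essentially the paper's route: transpose the inner horn lifting problems along the adjunction and use that the categorical-direction inner horn inclusions $L^k[m]\hookrightarrow F[m]$ are trivial cofibrations in $\Dsset_{\mathrm{Seg}}$ (the paper simply cites Joyal--Tierney for this, where you sketch an induction; note in passing that for $m=2$ the inner horn contains only the spine, not the full $1$-skeleton, but that is all your factorization needs). The setup of (iii) also matches the paper: reduce via \cref{lem:charhtpycat} and the identification $(RX)_{-,0}=X_{-,0}$ to showing that $c(X_{-,0})\to c(RX)$ is bijective on hom sets, with injectivity as the only real issue.

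The genuine gap is precisely at that injectivity step, and the mechanism you propose does not close it. You want: if $\alpha,\beta\in\map_X(x,y)_0$ are joined by a path $\gamma$, then there is $h\in X_{2,0}$ with $d_2h=\alpha$, $d_1h=\beta$, $d_0h=s_0y$ (Joyal's relation). Lifting the $1$-simplex $(\gamma,\mathrm{const}_{s_0y})$ along the trivial fibration $X_2\twoheadrightarrow X_1\times_{X_0}X_1$ with \emph{prescribed} degenerate endpoints $s_1\alpha$ and $s_1\beta$ produces an element of $X_{2,1}$ whose $d_1$ is just another path in $\map_X(x,y)$ from $\alpha$ to $\beta$ --- no new relation in $X_{2,0}$ appears. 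Lifting with \emph{free} endpoints produces $h_0,h_1\in X_{2,0}$ witnessing $\alpha\sim d_1h_0$ and $\beta\sim d_1h_1$, but $d_1h_0$ and $d_1h_1$ are again only related by a path, so the argument is circular; ``combining with analogous lifts on the other side'' does not break this circularity, because the whole difficulty is to convert a homotopy in the space direction into a relation in the categorical direction, and the Segal trivial fibration alone never does that conversion. The paper's proof supplies the missing idea: solve the extension problem for the Reedy trivial cofibration
\[
\partial F[2]\times\Delta[1]\amalg_{\partial F[2]\times\{0\}}F[2]\times\{0\}\hookrightarrow F[2]\times\Delta[1]
\]
(the pushout-product of the mono $\partial F[2]\hookrightarrow F[2]$ with the trivial cofibration $\{0\}\hookrightarrow\Delta[1]$), feeding in the degenerate $2$-simplex $s_1\alpha$ on $F[2]\times\{0\}$, the homotopy $\gamma$ on one face of $\partial F[2]\times\Delta[1]$, and degeneracies on the remaining faces; Reedy fibrancy of $X$ gives the extension, and its restriction to $F[2]\times\{1\}$ is exactly the desired witness in $X_{2,0}$. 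Without this (or an equivalent device, e.g.\ comparing $\map_X(x,y)$ with the quasi-categorical mapping space of $X_{-,0}$), your proof of (iii) is incomplete.
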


\begin{proof}
    Assertions (i) and (ii) follow from the fact that each inner horn inclusion $L^k[m]\hookrightarrow F[m]$ for $0<k<m$, where $L^k[m]$ denotes the $k$-th horn, in the categorical direction is a trivial cofibration in $\Dsset_\mathrm{Seg}$ by \cite[Lemma 3.5]{JT}. 
    
    For (iii), using \cref{lem:charhtpycat}, it suffices to show that the inclusion $X_{-, 0}\to RX$ induces an isomorphism on categorifications. For this, recall from \cite[Proposition 1.11]{JoyalVolumeII} that the category $c(X_{-,0})$ has set of objects $X_{0, 0}$, and hom set at $x, y\in X_{0, 0}$ given by the quotient of $\map_X(x, y)_0$ by the following equivalence relation: $\alpha\sim \beta$ if there exists $h\in X_{2, 0}$ such that $d_2h=\alpha$, $d_1h=\beta$, and $d_0h=s_0y$. Unraveling the definitions, the functor $c(X_{-, 0})\to c(RX)$ is now given by the identity on objects and, on hom sets, by the canonical map 
    \[ \map_X(x, y)_0/_\sim\to \pi_0\map_X(x, y) \]
    sending $\alpha$ to its path component. In particular, if $\alpha\sim \beta$ in $\map_X(x, y)_0$, then $\alpha,\beta$ lie in the same path component of $\map_X(x, y)$.

To see that $c(X_{-, 0})\to c(RX)$ is an isomorphism, it remains to show that $\alpha\sim \beta$ as soon as $\alpha, \beta \in \map_X(x, y)_0$ lie in the same path component of $\map_X(x,y)$. Let $h'\colon F[1]\times \Delta[1]\to RX\hookrightarrow X$ be a homotopy in between $\alpha,\beta\in \map_X(x,y)_0$, and consider the extension problem below left.
\begin{tz}
        \node[](1) {$\partial F[2]\times \Delta[1]\amalg_{\partial F[2]\times\{0\}} F[2]\times\{0\}$}; 
        \node[right of=1,xshift=3cm](2) {$X$}; 
        \node[below of=1](3) {$F[2]\times \Delta[1]$}; 

        \draw[->](1) to (2); 
        \draw[right hook->](1) to (3); 
        \draw[->, dashed]($(3.east)$) to node[below,la]{$H$} ($(2.south west)$);

        \node[right of=2,xshift=2cm,yshift=.25cm](1) {$x$};
        \node[below right of=1,yshift=.3cm,xshift=.5cm](1') {$y$}; 
        \node[above right of=1',yshift=-.3cm,xshift=.5cm](1'') {$y$};
        \node[below of=1](2) {$x$};
        \node[below of=1'](2') {$y$}; 
        \node[below of=1''](2'') {$y$};

        \draw[d](1) to (2);
        \draw[d](1'') to (2'');
        \draw[->](1) to node[below,la]{$\alpha$} (1');
        \draw[->](1) to node(a)[above,la]{$\alpha$} (1'');
        \draw[->](2) to node[below,la]{$\beta$} (2');
        \draw[->](2) to node[above,la,xshift=10pt]{$\alpha$} (2'');
        \draw[d](1') to (1'');
        \draw[d](2') to (2'');
        \node[la] at ($(1)!0.5!(2')$) {$h'$};
        \draw[-,line width=2mm,white](1') to (2');
        \draw[d](1') to (2');
\end{tz}
Here the top map is given by the picture as above right, with the top, back, and right-hand faces degenerate and the left-hand face given by $h'$. Since $X$ is Reedy fibrant, there exists a dashed extension $H$, whose restriction to $F[2]\times \{1\}$ provides an element $h\in X_{2,0}$ that exhibits $\alpha\sim \beta$.
\end{proof}

\begin{prop}\label{lem:isofib}
Let $f\colon X\to Y$ be a Reedy fibration between Segal spaces. Then the following are equivalent:
\begin{rome}
\item the map $f$ is an isofibration,
\item the map $f$ has the right lifting property against either inclusion $F[0]\hookrightarrow \NI$, where $\NI$ denotes the nerve of the free-living isomorphism.
\end{rome}
\end{prop}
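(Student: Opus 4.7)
The plan is to reduce the claim to the analogous known result for quasi-categories, which has been established by Joyal: an inner fibration $p\colon X\to Y$ between quasi-categories is an isofibration (meaning $\ho p$ is an isofibration of categories) if and only if $p$ has the right lifting property against either inclusion $\Delta[0]\hookrightarrow \NI$ of simplicial sets. The bridge between Segal spaces and quasi-categories is provided by \cref{lem:undqcat}, which says that applying the underlying-space functor $(-)_{-,0}$ to the Reedy fibration $f$ produces an inner fibration $f_{-,0}\colon X_{-,0}\to Y_{-,0}$ between quasi-categories, and that the induced functor $\ho(X_{-,0})\to \ho(Y_{-,0})$ of homotopy categories agrees with $\ho f\colon \ho X\to \ho Y$ under the natural isomorphisms of \cref{lem:undqcat}(iii).

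First I would observe that both $F[0]$ and $\NI$ are discrete in the space direction, i.e.~they are constant-valued simplicial spaces on the simplicial sets $\Delta[0]$ and $\NI$ respectively (by \cref{nerve}, the nerve of a category is a discrete simplicial space). Since every simplicial space of this form is a colimit of representables $F[m]$ in the categorical direction, and maps $F[m]\to X$ in $\Dsset$ correspond bijectively to $0$-simplices of $X_{m,-}$, one obtains for any simplicial space $X$ a natural bijection
\[
\Dsset(K,X)\cong \sset(K,X_{-,0})
\]
when $K$ is such a discrete simplicial space. In particular, a lifting problem of $f\colon X\to Y$ against the inclusion $F[0]\hookrightarrow \NI$ in $\Dsset$ is the same datum as a lifting problem of $f_{-,0}\colon X_{-,0}\to Y_{-,0}$ against the corresponding inclusion $\Delta[0]\hookrightarrow \NI$ in $\sset$.

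Combining these two ingredients, the proof then proceeds as follows. For the implication $\mathrm{(i)}\Rightarrow\mathrm{(ii)}$, suppose $f$ is an isofibration. By \cref{lem:undqcat}, the map $f_{-,0}$ is an inner fibration between quasi-categories, and $\ho(f_{-,0})\cong \ho f$ is an isofibration of categories. Joyal's theorem then gives $f_{-,0}$ the right lifting property against either inclusion $\Delta[0]\hookrightarrow \NI$, and the reduction above transfers this to $f$. Conversely, for $\mathrm{(ii)}\Rightarrow\mathrm{(i)}$, the reduction gives that $f_{-,0}$ has the right lifting property against either inclusion $\Delta[0]\hookrightarrow \NI$, so Joyal's theorem implies that $\ho(f_{-,0})$ is an isofibration; under the identification $\ho(f_{-,0})\cong \ho f$, this shows $f$ is an isofibration.

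The genuinely nontrivial content is \cref{lem:undqcat}, which has already been established. The remaining obstacle is therefore simply the bookkeeping in the reduction step, namely spelling out that $F[0]$ and $\NI$ are discrete in the space direction so that maps out of them in $\Dsset$ are determined by their components in bidegree $(m,0)$; this is routine. The symmetry between the two inclusions $F[0]\hookrightarrow \NI$ (coming from the swap automorphism of $\NI$) ensures that the two lifting properties are in fact equivalent, matching the corresponding symmetry on the quasi-categorical side.
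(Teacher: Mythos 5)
Your proposal is correct and follows essentially the same route as the paper: reduce the lifting problem to the underlying simplicial set $f_{-,0}$ (using that $F[0]$ and $\NI$ are levelwise discrete), then apply Joyal's result for inner fibrations between quasi-categories together with \cref{lem:undqcat}(ii) and (iii) to identify the homotopy categories. The extra bookkeeping you spell out (the bijection $\Dsset(K,X)\cong\sset(K,X_{-,0})$ for levelwise discrete $K$) is exactly the implicit first step of the paper's proof.
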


\begin{proof}
The map $f\colon X\to Y$ has the right lifting property against $F[0]\hookrightarrow\NI$ if and only if the induced map $f_{-,0}\colon X_{-,0}\to Y_{-,0}$ has the right lifting property against $F[0]\hookrightarrow \NI$ (viewed as a map of simplicial sets). The map $f_{-,0}$ is an inner fibration between quasi-categories by \cref{lem:undqcat} (ii), so that \cite[Proposition 2.4]{Joyal} implies that the map $f_{-,0}$ has the said lifting property if and only if the induced functor $c(X_{-,0})\to c(Y_{-,0})$ between homotopy categories is an isofibration. By \cref{lem:undqcat}~(iii), we conclude that this is the case if and only if $f$ is an isofibration. 
\end{proof}

Finally, we prove a technical result about isofibrations that will be useful later on. 
\begin{defn}
    Let $X$ be a Segal space.
    \begin{rome}
      \item  The \emph{space of homotopy equivalences} of $X$ is the subspace $X_\mathrm{hoeq}\subseteq X_1$ consisting of those maps $F[1]\to X$ whose image in the homotopy category $\ho X$ is an isomorphism.
      \item The \emph{underlying Segal groupoid} of $X$ is the sub-simplicial space $X^\simeq\subseteq X$ given at $m\geq 0$ by the simplicial set
      \[ \textstyle (X^{\simeq})_m\coloneqq X_m\times_{\prod_{[1]\to [m]} X_1} (\prod_{[1]\to [m]} X_\mathrm{hoeq}). \]
    \end{rome}
\end{defn}

\begin{lemma} \label{lemma:undgpd}
    Let $X,Y$ be Segal spaces and $X\to Y$ be an isofibration in $\Dsset$. Then:
    \begin{rome}
    \item every map $\NI\to X$ factors as $\NI\to X^{\simeq}\hookrightarrow X$,
    \item every map $\NI\times \NI\to X$ factors as $\NI\times \NI\to X^{\simeq}\hookrightarrow X$,
    \item the underlying simplicial set $(X^\simeq)_{-,0}$ is a Kan complex, 
    \item the induced map $(X^\simeq)_{-,0}\to (Y^\simeq)_{-,0}$ is a fibration in $\sset$.
    \end{rome}
\end{lemma}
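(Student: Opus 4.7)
The plan is to reduce all four statements to classical facts about quasi-categories, applied to the underlying quasi-category $X_{-,0}$ given by \cref{lem:undqcat}. The organizing observation is that $\NI$ is discrete in the space direction, so that a map $\NI\to X$ (respectively $\NI\times \NI\to X$) is entirely determined by its space-degree-$0$ restriction $\NI_{-,0}\to X_{-,0}$ (respectively $(\NI\times \NI)_{-,0}\to X_{-,0}$), viewed as a map of simplicial sets.

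For (i), I would apply the categorification functor $c$ to such a restriction and, using the fully faithfulness of $N\colon\cat\to\sset$ together with the natural isomorphism $c(X_{-,0})\cong \ho X$ from \cref{lem:undqcat}(iii), obtain a functor $I[1]\to \ho X$. Since every morphism of $I[1]$ is an isomorphism, every $1$-face of every simplex of $\NI$ is mapped to an element of $X_{1,0}$ whose class in $\ho X$ is an iso, and hence, by the definition of $X_\mathrm{hoeq}$, lies in that subspace; the pullback description of $X^\simeq$ then yields the factorization $\NI\to X^\simeq\hookrightarrow X$. Part (ii) follows by the identical argument, using $(\NI\times \NI)_{-,0}\cong N(I[1]\times I[1])$ (the nerve preserves products) and that $I[1]\times I[1]$ is still a groupoid.

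For (iii) and (iv), I would first translate the isofibration hypothesis into quasi-categorical terms. By \cref{lem:isofib}, $f\colon X\to Y$ has the right lifting property against $F[0]\hookrightarrow \NI$, which at space degree $0$ becomes the right lifting property of $f_{-,0}\colon X_{-,0}\to Y_{-,0}$ against $\Delta[0]\hookrightarrow \NI_{-,0}$; combined with \cref{lem:undqcat}(ii), this exactly says that $f_{-,0}$ is an isofibration between quasi-categories in Joyal's sense. Two standard facts about quasi-categories---that the core of a quasi-category is a Kan complex, and that an isofibration restricts to a Kan fibration on cores (see e.g.\ \cite{JoyalVolumeII})---then deliver (iii) and (iv), provided one identifies $(X^\simeq)_{-,0}$ with the core of $X_{-,0}$.

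This identification is the main bookkeeping step I expect to need, and it follows by unfolding the defining pullback of $X^\simeq$: an element of $(X^\simeq)_{m,0}$ is an $m$-simplex of $X_{-,0}$ all of whose $1$-faces lie in $X_{\mathrm{hoeq},0}$, i.e., represent isomorphisms in $\ho X$. Under the isomorphism $\ho(X_{-,0})\cong \ho X$ from \cref{lem:undqcat}(iii), this is precisely the condition that the simplex belongs to the maximal sub-Kan complex of $X_{-,0}$, which completes the reduction.
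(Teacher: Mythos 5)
Your argument for (i) and (ii) is essentially the paper's: both reduce to the observation that $\NI$ (and $\NI\times\NI$) is levelwise discrete, so the factorization can be checked on vertices, where it follows from the induced functor $I[1]\to \ho X$ (resp.\ $I[1]\times I[1]\to \ho X$) sending every morphism to an isomorphism. For (iii) and (iv), however, you take a genuinely different route. The paper does not identify $(X^\simeq)_{-,0}$ with the core of $X_{-,0}$; instead it proves, via explicit pullback squares comparing the matching and Segal maps of $X^\simeq$ with those of $X$, that $X^\simeq$ is itself a Reedy fibrant Segal space and that $X^\simeq\to Y^\simeq$ is a Reedy fibration, so that \cref{lem:undqcat} applies to $X^\simeq$ and $Y^\simeq$ directly; it then concludes (iii) from Joyal's criterion that a quasi-category whose homotopy category is a groupoid is a Kan complex, and (iv) from \cite[Proposition 2.4]{Joyal} together with the lifting property against $F[0]\hookrightarrow\NI$ inherited from the isofibration $X\to Y$ via (i). Your route instead verifies the (correct) identification of $(X^\simeq)_{-,0}$ with the maximal simplicial subset of the quasi-category $X_{-,0}$ spanned by the simplices all of whose edges are equivalences, translates the isofibration hypothesis into Joyal's notion for $f_{-,0}$ (both steps are sound, by the same discreteness argument and \cref{lem:isofib}, \cref{lem:undqcat}), and then outsources (iii) and (iv) to the standard facts that the core of a quasi-category is a Kan complex and that an isofibration of quasi-categories restricts to a Kan fibration on cores. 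This is shorter and avoids the pullback-square computation, at the price of invoking stronger black-box results that must be located precisely in the quasi-category literature (they are indeed in Joyal's work, so this is a citation issue rather than a gap); the paper's more hands-on argument stays within the two Joyal results it already cites and, as a byproduct, records that $X^\simeq$ is a Segal space and $X^\simeq\to Y^\simeq$ a Reedy fibration, which its proof of (iv) exploits. One cosmetic caution: describing the core as ``the maximal sub-Kan complex'' before invoking Kan-ness is slightly circular in phrasing; define it by the edge condition, as you in fact do, and then cite the Kan-ness result.
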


\begin{proof}
    To see (i), it suffices to show that the $1$-simplices of $\NI$ map to $1$-simplices of $X$ whose image in the homotopy category $\ho X$ is an isomorphism. But this follows from the fact that $\NI\to X$ induces a functor $I[1]\cong \ho(NI[1])\to \ho X$. The proof of (ii) works similarly, using that $I[1]\times I[1]\cong \ho(\NI\times \NI)$.
    
    For (iii), we show that we have the following pullback squares in $\sset$. 
    \begin{tz}
        \node[](1) {$(X^\simeq)_m$}; 
        \node[right of=1,xshift=2cm](2) {$\Map(\partial F[m],X^\simeq)$}; 
        \node[below of=1](3) {$X_m$}; 
        \node[below of=2](4) {$\Map(\partial F[m],X)$}; 
        \pullback{1};

        \draw[->>](1) to (2); 
        \draw[->>](3) to (4); 
        \draw[right hook->](1) to (3); 
        \draw[right hook->](2) to (4); 

        \node[right of=2,xshift=3.5cm](2') {$X_\mathrm{hoeq}\times_{X_0}\ldots\times_{X_0} X_\mathrm{hoeq}$}; 
        \node[below of=2'](4') {$X_1\times_{X_0}\ldots \times_{X_0} X_1$}; 
        \pullback{2};

        \draw[->>](2) to (2'); 
        \draw[->>](4) to (4'); 
        \draw[right hook->](2') to (4');
    \end{tz}
    To see that the outer and right-hand square are pullbacks, note that the pullback of the given cospan is the space of maps $(\partial)F[m]\to X$ whose restriction to $\Sp[m]$ classifies $m$ composable morphisms in $X$ that all induce isomorphisms in $\ho X$. Any composition of these morphisms defines an isomorphism in $\ho X$ as well, so that any composite $F[1]\to (\partial)F[m]\to X$ defines an element in $X_\mathrm{hoeq}$. This shows that the pullback is isomorphic to $(X^\simeq)_m$ or $\Map(\partial F[m],X^\simeq)$. The left-hand square is then also a pullback square by cancellation. Using this, the fact that $X$ is a Segal space implies that $X^\simeq$ is also a Segal space. By \cref{lem:undqcat} (i), we get that $(X^\simeq)_{-,0}$ is a quasi-category, whose homotopy category coincides with $\ho(X^\simeq)\subseteq \ho X$ by \cref{lem:undqcat} (iii). Every morphism in the quasi-category $(X^\simeq)_{-,0}$ is therefore invertible, so that $(X^\simeq)_{-,0}$ is a Kan complex by \cite[Corollary 1.4]{Joyal}.

    For (iv), using (iii), it suffices to show that $(X^\simeq)_{-,0}\to (Y^\simeq)_{-,0}$ is an inner fibration which lifts against $F[0]\hookrightarrow \NI$. Using the above left pullback square, one can show that the $m$-th relative matching map of $X^\simeq\to Y^\simeq$ is a pullback of the $m$-th relative matching map of $X\to Y$. The fact that $X\to Y$ is a Reedy fibration then implies that ${X^\simeq\to Y^\simeq}$ is also a Reedy fibration. By \cref{lem:undqcat} (ii), we get that $(X^\simeq)_{-,0}\to (Y^\simeq)_{-,0}$ is an inner fibration. Using \cite[Proposition 2.4]{Joyal} and the isomorphisms $\ho((X^\simeq)_{-,0})\cong \ho (X^\simeq)$ and $\ho((Y^\simeq)_{-,0})\cong \ho (Y^\simeq)$ from \cref{lem:undqcat} (iii), it suffices to show that $X^\simeq\to Y^\simeq$ has the right lifting property against $F[0]\hookrightarrow \NI$. By \cref{lem:isofib} and (i), this follows from the fact that $X\to Y$ is an isofibration.
\end{proof}

\begin{lemma} \label{technicalisofib}
    Let $X\to Y$ be an isofibration between Segal spaces. Then the induced maps
    \[ p\colon X^{\NI}\to Y^{\NI}\times_{Y\times Y} (X\times X) \quad \text{and}\quad q\colon X^{\NI}\to Y^{\NI}\times_Y X \]
    are isofibrations. 
\end{lemma}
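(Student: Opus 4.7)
The plan is to verify in turn that $p$ and $q$ are Reedy fibrations between Segal spaces, and then to check the isofibration property via the lifting characterization of \cref{lem:isofib}. For the Reedy fibration part, both maps arise from the Reedy pushout-product construction: $p$ from the cofibration $F[0]\amalg F[0]\hookrightarrow \NI$ and the Reedy fibration $X\to Y$, and $q$ from $F[0]\hookrightarrow \NI$ and the same fibration. The Reedy model structure on $\Dsset$ satisfies the pushout-product axiom with respect to the cartesian product, so $p$ and $q$ are Reedy fibrations. Their targets are moreover Segal spaces, since they are pullbacks of fibrant objects along a Reedy fibration in the cartesian closed model structure $\Dsset_\mathrm{Seg}$, where fibrancy is preserved under such pullbacks. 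By \cref{lem:isofib}, it remains to verify that $p$ and $q$ admit the right lifting property against $F[0]\hookrightarrow \NI$.

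By the Cartesian adjunction, these lifting problems transpose to lifting problems for $X\to Y$ against the pushout-products
\[
(F[0]\hookrightarrow \NI)\mathbin{\hat{\times}}(F[0]\amalg F[0]\hookrightarrow \NI) \quad\text{and}\quad (F[0]\hookrightarrow \NI)\mathbin{\hat{\times}}(F[0]\hookrightarrow \NI)
\]
respectively. Each is a monomorphism $J\hookrightarrow \NI\times \NI$, where $J$ is the ``U-shaped'' union of three sides of the ``square'' $\NI\times \NI$ (for $p$) or the ``L-shaped'' union of two sides meeting at a corner (for $q$). Since both $J$ and $\NI\times \NI$ are constant in the space direction, the lifting problem in $\Dsset$ amounts to a lifting problem in $\sset$ for $X_{-,0}\to Y_{-,0}$ against the underlying inclusion $J\hookrightarrow NI[1]\times NI[1]$. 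Moreover, by \cref{lemma:undgpd}(i), (ii), every map $\NI\to X$ and every map $\NI\times \NI\to Y$ factors through $X^\simeq\hookrightarrow X$ and $Y^\simeq\hookrightarrow Y$ respectively, so the entire square factors through $X^\simeq\to Y^\simeq$. By \cref{lemma:undgpd}(iv), the restriction $(X^\simeq)_{-,0}\to (Y^\simeq)_{-,0}$ is a Kan fibration, so it suffices to lift against it.

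The remaining step is purely simplicial: I need to check that the L- and U-shaped inclusions $J\hookrightarrow NI[1]\times NI[1]$ are trivial cofibrations in the Kan--Quillen model structure on $\sset$. Being monomorphisms, it suffices to verify they are weak equivalences. Both source and target are weakly contractible: $NI[1]$ is the nerve of the indiscrete groupoid on two objects, hence contractible, so $NI[1]\times NI[1]$ is contractible; and $J$, being an iterated pushout of contractible intervals along single points in a tree-like configuration, is contractible as well. I expect the combinatorial contractibility check for $J$ and the careful bookkeeping of the pushout-product to be the main technical bits, but both are elementary.
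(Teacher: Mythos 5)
Your proof is correct and follows essentially the same route as the paper: show $p$ and $q$ are Reedy fibrations between Segal spaces via the cartesian structure, transpose the lifting problem against $F[0]\hookrightarrow \NI$ to the pushout-product inclusions into $\NI\times\NI$, factor through $X^\simeq\to Y^\simeq$ using \cref{lemma:undgpd} (i) and (ii), and conclude from the fibration $(X^\simeq)_{-,0}\to (Y^\simeq)_{-,0}$ of \cref{lemma:undgpd} (iv). The only (harmless) deviation is in the case of $q$: the paper factors the ``L-shaped'' inclusion through the ``U-shaped'' one and lifts against the first leg directly using that $X\to Y$ is an isofibration, whereas you lift against the L-shaped inclusion in one step by noting that it, too, is a trivial cofibration of simplicial sets.
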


\begin{proof}
    The maps $F[0]\amalg F[0]\hookrightarrow \NI$ and $F[0]\hookrightarrow \NI$ are cofibrations and $X\to Y$ is a fibration between fibrant objects in $\Dsset_{\mathrm{Seg}}$. The cartesian closedness of $\Dsset_{\mathrm{Seg}}$ then implies that $p$ and $q$ are fibrations in $\Dsset_{\mathrm{Seg}}$ between fibrant objects, i.e., Reedy fibrations between Segal spaces.
    It thus remains to show that they have the right lifting property against $F[0]\hookrightarrow \NI$. 
        
    For the map $p$, this is equivalent to showing that $X\to Y$ has the right lifting property against 
    \[ \NI\amalg_{ F[0]\amalg F[0]} (\NI\amalg \NI)\hookrightarrow \NI\times \NI. \]
    Using \cref{lemma:undgpd} (i) and (ii), this is equivalent to showing that $X^\simeq\to Y^\simeq$ has the right lifting property against the above map. This follows from the fact that $(X^\simeq)_{-,0}\to (Y^\simeq)_{-,0}$ is fibration in $\sset$ by \cref{lemma:undgpd}~(iv).

    The right lifting property of the map $q$ against $F[0]\hookrightarrow \NI$ is equivalent to the right lifting property of $X\to Y$ against the composite map
     \[ \NI\amalg_{F[0]} \NI\hookrightarrow\NI\amalg_{ F[0]\amalg F[0]} (\NI\amalg \NI)\hookrightarrow \NI\times \NI. \]
     The map $X\to Y$ has the right lifting property against the first map, as this is a pushout of the map $F[0]\hookrightarrow \NI$ and $X\to Y$ is an isofibration, and against the second map by the above. 
\end{proof}

\section{Construction of the model structure} \label{sec:existence}

In this section, we aim to prove the existence of the following model structure. 

\begin{theorem} \label{thm:MS}
    There is a cofibrantly generated model structure on $\Dsset$, which we refer to as the \emph{categorical model structure} and denote by $\Dsset_\mathrm{Cat}$, in which 
    \begin{rome}
        \item the cofibrations are the monomorphisms $f\colon X\hookrightarrow Y$ such that there is a set $R$ and a weak equivalence $X_0\amalg R\xrightarrow{\simeq} Y_0$ in $\sset$ whose restriction to $X_0$ is $f_0$,
        \item the fibrant objects are the Segal spaces,
    \item the weak equivalences between Segal spaces are the Dwyer--Kan equivalences,
    \item the fibrations between Segal spaces are the isofibrations.
    \end{rome}
\end{theorem}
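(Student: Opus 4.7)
The plan is to construct $\Dsset_\mathrm{Cat}$ by applying Jeff Smith's recognition theorem for combinatorial model structures. We define $W$ globally as the class of maps that become Dwyer--Kan equivalences between Segal spaces after passing to Segal fibrant replacements in $\Dsset_\mathrm{Seg}$, and specify explicit sets $I$ and $J$ of generating cofibrations and trivial cofibrations. For $I$, take the union of $\emptyset\hookrightarrow F[0]$ (freely adjoining an object), the Reedy-style pushout--products $\partial F[m]\times\Delta[n]\cup F[m]\times\partial\Delta[n]\hookrightarrow F[m]\times\Delta[n]$ for $m\geq 1$ and $n\geq 0$ (which are identities on level~$0$ since $\partial F[m]_0=F[m]_0$), and the level-$0$ horn inclusions $F[0]\times\Lambda^k[n]\hookrightarrow F[0]\times\Delta[n]$ for $n\geq 1$ (adding contractible blobs to the space of objects). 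For $J$, take those generating trivial cofibrations of $\Dsset_\mathrm{Seg}$ that lie in $\operatorname{cof}(I)$, together with the inclusion $F[0]\hookrightarrow \NI$ and its pushout--products with boundary inclusions $\partial\Delta[n]\hookrightarrow\Delta[n]$, which will enforce the isofibration condition via \cref{lem:isofib}.

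The first task is to verify that $\operatorname{cof}(I)$ matches the description in~(i): given a cofibration $f\colon X\hookrightarrow Y$ as in~(i), one factors the construction of $f$ as (a) adjoining the extra components $R$ via pushouts of $\emptyset\hookrightarrow F[0]$, (b) building the resulting level-$0$ trivial cofibration $X_0\amalg R\xrightarrow{\simeq} Y_0$ out of level-$0$ horn inclusions, and (c) filling in non-degenerate cells in simplicial degrees $m\geq 1$ using the Reedy generators. Verifying Smith's hypotheses then requires: that $W$ is accessible, closed under 2-out-of-3 and retracts (which follows from the corresponding properties in $\Dsset_\mathrm{Seg}$ combined with the Dwyer--Kan equivalence machinery of \cref{sec:DKeq}, in particular \cref{charDK} and \cref{levelwiseDK}); that every $I$-injective lies in $W$, which requires analyzing how Segal replacement acts on maps that are Reedy trivial fibrations above level~$0$ and Kan fibrations on level~$0$; and that $\operatorname{cof}(I)\cap W$ is the weakly saturated class generated by $J$.

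The main obstacle is this last requirement, specifically showing that pushouts of $F[0]\hookrightarrow\NI$ and its thickenings yield weak equivalences. Here the isofibration technology of \cref{sec:isofib} becomes essential: invoking \cref{lemma:undgpd} and \cref{technicalisofib}, one shows that such pushouts factor through the underlying Segal groupoid, yielding homotopy pushout squares of Segal spaces in which the Dwyer--Kan equivalence class is preserved. Once this is established, the characterizations (ii)--(iv) fall out by unpacking $J$-injectivity: a fibrant object is in particular $J$-injective, hence Reedy fibrant and Segal, so a Segal space; a fibration between Segal spaces is $J$-injective, hence Reedy fibrant with the right lifting property against $F[0]\hookrightarrow\NI$, which by \cref{lem:isofib} is precisely an isofibration; and a weak equivalence between Segal spaces is, by definition, a Dwyer--Kan equivalence, since each is already its own Segal replacement.
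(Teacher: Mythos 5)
Your overall strategy---Smith's recognition theorem applied to the global class $W$ of maps whose Segal fibrant replacement is a Dwyer--Kan equivalence, with essentially the paper's generating set $\cI$---is not unreasonable: by \cref{thm:MSCSS} (iii) your $W$ is precisely the class of weak equivalences of $\Dsset_\mathrm{CSS}$, so the $2$-out-of-$3$ property, accessibility, and the closure of $\mathrm{cof}(\cI)\cap W$ under pushout and transfinite composition all come for free from the existence of that model structure. But you have misplaced the difficulty. What you call the main obstacle---that pushouts of $F[0]\hookrightarrow\NI$ and its thickenings are weak equivalences---is immediate: these maps are monomorphisms lying in $W$, hence trivial cofibrations in $\Dsset_\mathrm{CSS}$, and trivial cofibrations are stable under pushout. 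Moreover, the argument you sketch for it does not parse: \cref{lemma:undgpd} and \cref{technicalisofib} are about lifting properties against isofibrations between Segal spaces, and say nothing about pushouts of $F[0]\hookrightarrow\NI$ along arbitrary maps of arbitrary simplicial spaces.

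The genuine gap is in your last step. In Smith's theorem the fibrations are the maps with the right lifting property against all of $\mathrm{cof}(\cI)\cap W$, not against your set $\cJ$, so statements (ii) and (iv) do not ``fall out by unpacking $\cJ$-injectivity''. You must prove that every Segal space is fibrant and that every isofibration between Segal spaces is a fibration, i.e.\ in effect that $\mathrm{cof}(\cI)\cap W$ is contained in the saturation of $\cJ$; the usual retract argument for this fails here because the needed implication ``$\cJ$-injective and in $W$ implies $\cI$-injective'' is only available between Segal spaces (\cref{proofof4}), not for the intermediate objects produced by factoring an arbitrary trivial cofibration. This is exactly the content the paper extracts from conditions \ref{path}--\ref{ax:trivfib} of \cref{thm:main}: the path object $X^{\NI}$ of \cref{prop:path} (which is where \cref{technicalisofib} is actually needed), \cref{proofof4}, and---hardest of all---the coskeletal analysis of \cref{sec:trivfibarewe} showing that every $\cI$-fibration admits a fibrant replacement that is a Dwyer--Kan equivalence. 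That last item is also precisely your unproved Smith hypothesis that every map with the right lifting property against $\cI$ lies in $W$, which you flag in a single clause but do not address; it is a substantial portion of the paper's proof. (A smaller point: your cell-by-cell argument that every monomorphism satisfying (i) lies in $\mathrm{cof}(\cI)$ does not work as stated, since a trivial cofibration of simplicial sets is in general only a retract of a relative horn complex and attaching constant horns modifies all simplicial levels at once; the paper's \cref{Icof} instead combines \cref{lem:charcofib0} with the small object argument and a retract argument.)
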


For this, we will use \cite[Theorem 2.8]{GMSV}. To recall the statement, we first introduce the following terminology for a locally presentable category $\cC$ and a set $\cI$ of morphisms.

\begin{defn}
     We say that a map $X\to Y$ in $\cC$ is
    \begin{rome}
        \item an \emph{$\cI$-fibration} if it has the right lifting property against every morphism in $\cI$; we denote by $\cI\text{-}\mathrm{fib}$ the class of all $\cI$-fibrations, 
        \item an \emph{$\cI$-cofibration} if it has the left lifting property against every $\cI$-fibration; we denote by $\cI\text{-}\mathrm{cof}$ the class of all $\cI$-cofibrations. 
    \end{rome}
    By the small object argument, the pair $(\cI\text{-}\mathrm{cof},\cI\text{-}\mathrm{fib})$ forms a weak factorization system on $\cC$. 
\end{defn}

Given a weak factorization system as above, we can introduce the following notions of fibrant objects and fibrant replacements. 

\begin{defn}
We introduce the following terminology. 
    \begin{rome}
        \item An object $X\in \cC$ is \emph{$\cI$-fibrant} if the unique morphism $X\to *$ to the terminal object is an $\cI$-fibration. 
        \item An \emph{$\cI$-fibrant replacement} of an object $X\in \cC$ is an $\cI$-fibrant object $\widetilde{X}$ together with an $\cI$-cofibration $X\to \widetilde{X}$. 
        \item An \emph{$\cI$-fibrant replacement} of a morphism $X\to Y$ is a morphism $\widetilde{X}\to \widetilde{Y}$ between $\cI$-fibrant objects fitting into a commutative square 
\begin{tz}
        \node[](1) {$X$}; 
        \node[right of=1](2) {$Y$}; 
        \node[below of=1](3) {$\widetilde{X}$}; 
        \node[below of=2](4) {$\widetilde{Y}$}; 

        \draw[->](1) to node[above,la]{} (2); 
        \draw[->](3) to node[below,la]{} (4); 
        \draw[->](1) to node[left,la]{$\cI\text{-}\mathrm{cof}\ni$} (3); 
        \draw[->](2) to node[right,la]{$\in\cI\text{-}\mathrm{cof}$} (4); 
    \end{tz}
    \end{rome}
     Since $(\cI\text{-}\mathrm{cof},\cI\text{-}\mathrm{fib})$ is a weak factorization system, such $\cI$-fibrant replacements always exist.
\end{defn}

The following theorem is a combination of \cite[Theorem 2.8 and Proposition 2.21]{GMSV}. 

    \begin{theorem}\label{thm:main}
Let $\cC$ be a locally presentable category, and let $\cI$ and $\cJ$ be sets of morphisms in $\cC$ such that $\cJ\subseteq \cI\text{-}\mathrm{cof}$. Suppose in addition that we have a class $\cW_f$ of morphisms in $\cC$ between $\cJ$-fibrant objects. Suppose that the following conditions are satisfied:
\begin{enumerate}[label=(\Roman*)]
    \item\label{2of6Wf} $\cW_f$ satisfies the 2-out-of-6 property, 
    \item \label{accessibility} there exists a class $\overline{\cW}$ of morphisms in $\cC$ such that $\cW_f$ is the restriction of $\overline{\cW}$ to the morphisms between $\cJ$-fibrant objects and such that $\overline{\cW}$ considered as a full subcategory of $\cC^{[1]}$ is accessible, 
    \item\label{path} for every $\cJ$-fibrant object $X$, there is a factorization of the diagonal morphism 
    \[ X\xrightarrow{w} \mathrm{Path} X\xrightarrow{p} X\times X \]
    such that $w\in \cW_f$ and $p\in \cJ\text{-}\mathrm{fib}$,
    \item\label{fibwe} $\cJ\text{-}\mathrm{fib}\cap\cW_f\subseteq\cI\text{-}\mathrm{fib}$,
    \item\label{ax:trivfib} $\cI\text{-}\mathrm{fib}\subseteq\cW$, where $\cW$ is the class of morphisms in $\cC$ which admit a $\cJ$-fibrant replacement that is in $\cW_f$.
\end{enumerate}
Then there is a cofibrantly generated model structure on $\cC$, in which 
\begin{rome}
    \item the cofibrations are the $\cI$-cofibrations, 
    \item the fibrant objects are the $\cJ$-fibrant objects,
    \item the weak equivalences between fibrant objects are the morphisms in $\cW_f$, 
    \item the fibrations between fibrant objects are the $\cJ$-fibrations.
\end{rome}
\end{theorem}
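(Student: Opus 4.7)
The plan is to apply Smith's recognition theorem for combinatorial model structures, taking the cofibrations to be $\cI\text{-}\mathrm{cof}$ and the weak equivalences to be the class $\cW$ defined in condition \ref{ax:trivfib}. Smith's theorem reduces the problem to three checks: (a) $\cW$ is an accessible subcategory of $\cC^{[1]}$ closed under retracts and satisfying 2-out-of-3; (b) $\cI\text{-}\mathrm{fib}\subseteq\cW$; and (c) the class $\cI\text{-}\mathrm{cof}\cap\cW$ is closed under pushouts and transfinite composition. Point (b) is hypothesis \ref{ax:trivfib} verbatim, and the accessibility half of (a) follows from \ref{accessibility} together with the fact that $\cJ$-fibrant replacement via the small object argument is an accessible construction on the arrow category.

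The first technical result I would prove is a \emph{bridging lemma}: on maps between $\cJ$-fibrant objects, $\cW$ coincides with $\cW_f$. Suppose $f\colon X\to Y$ between $\cJ$-fibrant objects admits a $\cJ$-fibrant replacement $\tilde f\in\cW_f$ via $\cJ$-cofibrations $j_X\colon X\to\tilde X$ and $j_Y\colon Y\to\tilde Y$. Lifting $\id_X$ against the $\cJ$-fibration $\tilde X\to\ast$ produces a retraction $r_X\colon \tilde X\to X$. Using the path object $\mathrm{Path}\,\tilde X$ from \ref{path} (itself $\cJ$-fibrant since $\cJ\text{-}\mathrm{fib}$ is closed under composition and products), a lifting problem of $j_X$ against its projection to $\tilde X\times\tilde X$ produces a homotopy between $j_Xr_X$ and $\id_{\tilde X}$. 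The two projections $\mathrm{Path}\,\tilde X\to\tilde X$ lie in $\cW_f$ by 2-out-of-3 applied to the map $w\in\cW_f$ of \ref{path}, so the homotopy forces $j_Xr_X\in\cW_f$. Applying 2-out-of-6 from \ref{2of6Wf} to the chain $\tilde X\xrightarrow{r_X} X\xrightarrow{j_X}\tilde X\xrightarrow{r_X} X$, whose middle composites $j_Xr_X$ and $r_Xj_X=\id_X$ both lie in $\cW_f$, yields $j_X\in\cW_f$. Symmetrically $j_Y\in\cW_f$, and 2-out-of-3 in the fibrant-replacement square then gives $f\in\cW_f$.

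The remaining half of (a) is handled via the bridging lemma: 2-out-of-3 for $\cW$ reduces to 2-out-of-3 for $\cW_f$ by taking a compatible $\cJ$-fibrant replacement of any composable pair, and closure under retracts follows from 2-out-of-6 by a similar argument. The central remaining step is (c), which I would establish via a lifting characterization: a morphism in $\cI\text{-}\mathrm{cof}$ lies in $\cW$ if and only if it has the left lifting property against every $\cJ$-fibration between $\cJ$-fibrant objects that belongs to $\cW_f$. One direction is immediate: any such $\cJ$-fibration lies in $\cI\text{-}\mathrm{fib}$ by \ref{fibwe}, and $\cI\text{-}\mathrm{cof}$ lifts against $\cI\text{-}\mathrm{fib}$ by definition. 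The reverse direction uses the path object of \ref{path}: given $f\in\cI\text{-}\mathrm{cof}$ with the lifting property, take a $\cJ$-fibrant replacement $\tilde f$, and use lifting problems against path-object projections to manufacture $\cW_f$-witness homotopies that, combined with 2-out-of-6, force $\tilde f\in\cW_f$. This characterization makes $\cI\text{-}\mathrm{cof}\cap\cW$ manifestly closed under pushouts and transfinite composition, completing the hypotheses of Smith's theorem.

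Once Smith's theorem applies, the conclusions follow directly. Cofibrations are $\cI\text{-}\mathrm{cof}$ by construction; weak equivalences between fibrant objects are $\cW_f$ by the bridging lemma; an object is fibrant if and only if its terminal map has the right lifting property against all trivial cofibrations, which by the lifting characterization of (c) is exactly $\cJ$-fibrancy; and fibrations between fibrant objects coincide with $\cJ$-fibrations, where one inclusion is again the lifting characterization, while the other combines \ref{fibwe} and \ref{ax:trivfib} to show any fibration between fibrant objects that lies in $\cW$ is already in $\cI\text{-}\mathrm{fib}$, hence in $\cJ\text{-}\mathrm{fib}$. The main obstacle throughout is the reverse direction of the lifting characterization in paragraph three, where the path object must be deployed with care to extract genuine membership in $\cW_f$ from abstract lifting data; everything else is comparatively formal, resting on 2-out-of-6 and routine small-object-argument considerations.
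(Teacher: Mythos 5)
First, a remark on the comparison itself: the paper does not prove this statement---it is quoted verbatim from \cite[Theorem~2.8]{GMSV}---so there is no internal proof to measure you against. Your overall architecture (Smith's recognition theorem applied to $\cI\text{-}\mathrm{cof}$ and the class $\cW$ of condition (V), plus a bridging lemma identifying $\cW$ with $\cW_f$ on maps between fibrant objects) is the standard and plausible route. But the central step, your lifting characterization of $\cI\text{-}\mathrm{cof}\cap\cW$, is broken. You claim that a map in $\cI\text{-}\mathrm{cof}$ lies in $\cW$ if and only if it has the left lifting property against every $\cJ$-fibration between $\cJ$-fibrant objects that belongs to $\cW_f$. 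By condition (IV) that right-hand class is contained in $\cI\text{-}\mathrm{fib}$, so \emph{every} $\cI$-cofibration has this lifting property---your own justification of the ``immediate'' direction proves exactly this, making no use of membership in $\cW$. The criterion is therefore vacuous, and the biconditional would force $\cI\text{-}\mathrm{cof}\subseteq\cW$, which is false (in the paper's application, $\emptyset\hookrightarrow F[1]$ is an $\cI$-cofibration whose fibrant replacement is not a Dwyer--Kan equivalence). Consequently your verification of Smith's condition (c)---closure of $\cI\text{-}\mathrm{cof}\cap\cW$ under pushout and transfinite composition, which is the heart of the theorem---is not established, and the same defect propagates to your identification of the fibrant objects and of the fibrations between them, both of which you derive from this characterization. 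A usable criterion must lift against a strictly larger class (for instance all $\cJ$-fibrations between $\cJ$-fibrant objects, with no $\cW_f$ restriction), and then the hard implication is the one you call immediate: showing that a cofibration in $\cW$ lifts against every such fibration, which is where the path object of (III) and condition (IV) have to do genuine work.

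A secondary, fixable issue: you repeatedly invoke $2$-out-of-$3$ for $\cW_f$ and tacitly assume $\id_X\in\cW_f$ (for example, to place the path-object projections in $\cW_f$), but hypothesis (I) only grants $2$-out-of-$6$. You must first show that $\cW_f$ contains the identities of fibrant objects---e.g.\ by applying $2$-out-of-$6$ to the composable triple $X\xrightarrow{\id}X\xrightarrow{w}\mathrm{Path}X\xrightarrow{\id}\mathrm{Path}X$, whose two consecutive composites both equal $w\in\cW_f$---after which $2$-out-of-$6$ together with identities yields $2$-out-of-$3$. With that supplied, your bridging lemma is essentially sound; the fatal problem is the one in the previous paragraph.
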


In the remainder of this section, we prove \cref{thm:MS} using \cref{thm:main}. First, in \cref{sec:cof,sec:fib}, we describe the generating sets $\cI$ and $\cJ$, respectively, and study the weak factorization systems they generate. In particular, we show that $\cI$-cofibrations and $\cJ$-fibrations between $\cJ$-fibrant objects align with the descriptions provided in (i), (ii), and (iv) of \cref{thm:MS}. Next, in \cref{sec:we}, we define $\cW_f$ as the class of Dwyer–Kan equivalences between Segal spaces, obtaining the description given in (iii) of \cref{thm:MS}. We also verify that, with this definition, Conditions \ref{2of6Wf} and \ref{accessibility} of \cref{thm:main} are satisfied. Finally, in \cref{sec:pathobj,sec:fibwearetrivfib,sec:trivfibarewe}, we establish that Conditions \ref{path}, \ref{fibwe}, and~\ref{ax:trivfib} of \cref{thm:main} hold in our setting, thereby completing the proof of \cref{thm:MS}.

\subsection{Cofibrations and trivial fibrations} \label{sec:cof}

Let us start by introducing the set $\cI$ of generating cofibrations.

\begin{notation}\label{cI}
    Let $\cI$ denote the set of maps in $\Dsset$ consisting of
    \begin{rome}
        \item for all $m\geq 1$ and $n\geq 0$ and $m=n=0$, 
        the monomorphism
        \[ \partial F[m]\times \Delta[n]\amalg_{\partial F[m]\times \partial\Delta[n]} F[m]\times \partial\Delta [n]\hookrightarrow F[m]\times \Delta[n], \]
        \item for all $n\geq 0$ and $0\leq k\leq n$, the monomorphism $\Lambda^k[n]\hookrightarrow \Delta[n]$.
    \end{rome}
\end{notation}

The following characterization of $\cI$-fibrations is straightforward from unpacking their lifting properties against maps in $\cI$.

\begin{prop} \label{Ifibration}
    A map $X\to Y$ in $\Dsset$ is an $\cI$-fibration if and only if the following conditions hold:
    \begin{enumerate}
        \item for all $m\geq 1$, the induced map $X_m \to Y_m\times_{\Map(\partial F[m],Y)} \Map(\partial F[m],X)$ is a trivial fibration in $\sset$, 
        \item the induced map $X_0\to Y_0$ is a fibration in $\sset$, 
        \item the induced map $X_{0,0}\to Y_{0,0}$ is surjective. 
    \end{enumerate}
\end{prop}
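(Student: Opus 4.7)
The strategy is to split $\cI$ into three families---the pushout-products of $\partial F[m]\hookrightarrow F[m]$ with $\partial \Delta[n]\hookrightarrow \Delta[n]$ for $m\geq 1$, the singleton generator corresponding to $m=n=0$, and the horn inclusions $\Lambda^k[n]\hookrightarrow \Delta[n]$---and unpack the lifting property of $X\to Y$ against each, using the tensoring of $\Dsset$ over $\sset$. The key input is that the defining formula $\Map(X,Y)_n\cong \Dsset(X\times \Delta[n],Y)$ extends by Yoneda to a natural isomorphism $\sset(K,\Map(X,Y))\cong \Dsset(X\times K,Y)$ for every simplicial set $K$ regarded as a constant simplicial space. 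This produces a Leibniz two-variable adjunction relating pushout-products of maps in $\Dsset$ and $\sset$ with relative mapping-space maps.

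Applied to the first family ($m\geq 1$, $n\geq 0$), the Leibniz adjunction shows that lifting $X\to Y$ against the pushout-product of $\partial F[m]\hookrightarrow F[m]$ and $\partial\Delta[n]\hookrightarrow \Delta[n]$ is equivalent to the $m$-th relative matching map $X_m\to Y_m\times_{\Map(\partial F[m],Y)}\Map(\partial F[m],X)$ lifting against $\partial\Delta[n]\hookrightarrow \Delta[n]$ in $\sset$. Letting $n$ range over all $n\geq 0$ gives exactly that this relative matching map is a trivial fibration in $\sset$, which is condition (1). The degenerate case $m=n=0$ simplifies (since both boundaries are empty) to $\emptyset \hookrightarrow F[0]$, and since $F[0]$ is the constant simplicial space at the point so that $\Dsset(F[0],Z)\cong Z_{0,0}$, lifting against it precisely expresses the surjectivity of $X_{0,0}\to Y_{0,0}$, which is condition (3).

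For the horn generators, note that both $\Lambda^k[n]$ and $\Delta[n]$ are constant simplicial spaces, and since $[0]$ is initial in $\Dop$, the limit of any simplicial space $Z$ is $Z_0$. Consequently, natural transformations from any constant simplicial space $K$ into $Z$ are in natural bijection with maps $K\to Z_0$ in $\sset$. Therefore lifting $X\to Y$ against $\Lambda^k[n]\hookrightarrow \Delta[n]$ in $\Dsset$ is equivalent to lifting $X_0\to Y_0$ against the same map in $\sset$; varying $n$ and $k$ this becomes exactly the statement that $X_0\to Y_0$ is a Kan fibration, i.e., condition (2). The whole argument is routine adjunction-unpacking; the only step requiring care is keeping track of the two roles of $\Delta[n]$ both as a simplicial set and as a constant simplicial space when invoking the Leibniz adjunction.
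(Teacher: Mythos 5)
Your proposal is correct and is precisely the adjunction-unpacking that the paper itself leaves implicit (its proof of \cref{Ifibration} is just the remark that the characterization follows from unpacking lifting properties against $\cI$): the Leibniz correspondence identifies lifting against the generators with $m\geq 1$ with the relative matching maps being trivial fibrations, the $m=n=0$ generator $\emptyset\hookrightarrow F[0]$ with surjectivity of $X_{0,0}\to Y_{0,0}$, and lifting against the constant horn inclusions with $X_0\to Y_0$ being a Kan fibration, via $\Dsset(K,Z)\cong\sset(K,Z_0)$ for $K$ constant. No gaps; this matches the intended argument.
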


In addition, we show that the $\cI$-cofibrations coincide with the cofibrations described in (i) of \cref{thm:MS}.

\begin{lemma} \label{lem:charcofib0}
    The following conditions are equivalent for a monomorphism $f\colon X\hookrightarrow Y$ in $\Dsset$:
    \begin{rome}
        \item there is a set $R$ and a weak equivalence $X_0\amalg R\xrightarrow{\simeq} Y_0$ whose restriction to $X_0$ is~$f_0$,
        \item for each connected component $S\subseteq Y_0$, either the induced map $f_0^{-1}(S)\to S$ is a weak equivalence in $\sset$ or we have $f_0^{-1}(S)=\emptyset$ and $S\simeq \Delta[0]$.
    \end{rome}
\end{lemma}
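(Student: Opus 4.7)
The plan is to reduce everything to the decomposition of $Y_0$ into its connected components and argue on each component separately. Since $f_0\colon X_0\hookrightarrow Y_0$ is a monomorphism of simplicial sets, for any connected component $S\subseteq Y_0$ the preimage $f_0^{-1}(S)$ is a (possibly empty) union of connected components of $X_0$, and we have a decomposition $f_0 = \coprod_{S} \bigl(f_0^{-1}(S)\hookrightarrow S\bigr)$ indexed by $\pi_0(Y_0)$.

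For the implication (i) $\Rightarrow$ (ii), suppose we are given a set $R$ and a weak equivalence $w\colon X_0\amalg R\xrightarrow{\simeq} Y_0$ extending $f_0$. For each connected component $S\subseteq Y_0$, let $R_S\subseteq R$ be the subset of points mapped by $w$ into $S$. Then $w$ restricts to a map $f_0^{-1}(S)\amalg R_S\to S$ which is again a weak equivalence (coproducts of simplicial sets preserve and reflect levelwise weak equivalences between components). Applying $\pi_0$, we need $|\pi_0(f_0^{-1}(S))|+|R_S|=1$. If $f_0^{-1}(S)\neq\emptyset$, this forces $R_S=\emptyset$ and $f_0^{-1}(S)$ to be connected, so $f_0^{-1}(S)\to S$ is itself the weak equivalence. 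If $f_0^{-1}(S)=\emptyset$, then $R_S$ is a single point and the restriction $R_S\xrightarrow{\simeq}S$ exhibits $S\simeq \Delta[0]$.

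For the converse (ii) $\Rightarrow$ (i), set
\[ R\coloneqq \{\, S\in \pi_0(Y_0)\;:\; f_0^{-1}(S)=\emptyset\,\}. \]
For each $S\in R$, choose a vertex $r_S\in S_0$ (which exists since $S\simeq \Delta[0]$ is in particular nonempty). Define $g\colon X_0\amalg R\to Y_0$ by $f_0$ on $X_0$ and by $S\mapsto r_S$ on $R$; by construction $g|_{X_0}=f_0$. To check that $g$ is a weak equivalence, decompose it component-wise: on components $S$ with $f_0^{-1}(S)\neq\emptyset$, it restricts to $f_0^{-1}(S)\to S$, which is a weak equivalence by hypothesis; on components $S\in R$, it restricts to $\{r_S\}\to S$, which is a weak equivalence since $S\simeq \Delta[0]$. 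A coproduct of weak equivalences is a weak equivalence, so $g$ is the required weak equivalence.

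There is no real obstacle; the argument is purely formal once one decomposes along $\pi_0(Y_0)$. The only points requiring a moment of care are that (a) connected components of the domain of a monomorphism map into unique components of the codomain, so the component-wise decomposition of $f_0$ is well-defined, and (b) the $\pi_0$-counting argument in the forward direction forces both $R_S=\emptyset$ and the connectedness of $f_0^{-1}(S)$ when $f_0^{-1}(S)\neq \emptyset$.
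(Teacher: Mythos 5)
Your proof is correct and follows essentially the same route as the paper: both directions reduce to the decomposition of $Y_0$ into path components, with (ii)~$\Rightarrow$~(i) handled by taking $R$ to consist of one point in each component with empty preimage and checking the resulting map is an equivalence over each component of the target. Your (i)~$\Rightarrow$~(ii) direction just spells out, via the $\pi_0$-count, what the paper compresses into transporting the conditions along the equivalence $X_0\amalg R\xrightarrow{\simeq} Y_0$.
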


\begin{proof}
  We see that (i) immediately implies (ii), by verifying the desired conditions on the equivalent map $X_0\hookrightarrow X_0\amalg R$ to $f_0$. For the converse, we take $R$ to be the set consisting of one point in each connected component $S\subseteq Y_0$ such that $f_0^{-1}(S)=\emptyset$. Then the map $X_0\amalg R\to Y_0$ is a weak equivalence in $\sset$, as it induces a weak equivalence over each path component of the target.
\end{proof}

\begin{prop} \label{Icof}
    A map $f\colon X\to Y$ in $\Dsset$ is an $\cI$-cofibration if and only if the following conditions hold:
    \begin{enumerate}
        \item it is a monomorphism, 
        \item there exist a set $R$ and a weak equivalence $X_0\amalg R\xrightarrow{\simeq} Y_0$ in $\sset$ whose restriction to $X_0$ is $f_0$.
    \end{enumerate}
    In particular, a simplicial space $X$ is $\cI$-cofibrant if and only if $X_0$ is weakly equivalent to a set.
\end{prop}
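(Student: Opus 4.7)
The plan is to prove the two implications and then specialise to $\emptyset\to X$ for the final assertion.

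For the forward direction, I show that the class $\cC$ of maps satisfying (1) and (2) is closed under pushouts, transfinite compositions, and retracts, and contains $\cI$. Monomorphisms form a saturated class, which handles (1) automatically. For (2) I use the equivalent reformulation from \cref{lem:charcofib0}(ii). The key observation is that the generators in $\cI$ act on level~$0$ in only three ways: the type (i) generators with $m\geq 1$ fix level~$0$ (because $\partial F[m]_0=F[m]_0$ for $m\geq 1$), the unique type (i) generator with $m=n=0$ is the map $\emptyset\hookrightarrow F[0]$, which simply adjoins a disjoint point, and the type (ii) generators restrict to the horn inclusions $\Lambda^k[n]\hookrightarrow\Delta[n]$ in $\sset$. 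Since trivial cofibrations in $\sset$ are preserved under pushout and new isolated points can be absorbed into $R$, pushouts of generators preserve (2); transfinite compositions are handled by concatenating the successive sets $R_\alpha$; and closure under retracts follows by passing to connected components and invoking the component-wise description in \cref{lem:charcofib0}(ii).

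For the reverse direction, suppose $f\colon X\hookrightarrow Y$ satisfies (1) and (2); I show it has the left lifting property against any $\cI$-fibration $p\colon W\to Z$. Given a lifting square with top map $g\colon X\to W$ and bottom map $h\colon Y\to Z$, I first use the surjectivity of $W_{0,0}\to Z_{0,0}$ from condition (3) of \cref{Ifibration} to choose, for each $r\in R$, a preimage of $h(r)$ in $W_{0,0}$. This extends $g$ to a map $X\amalg R\to W$ lying over $h$, and reduces the problem to lifting the monomorphism $X\amalg R\hookrightarrow Y$, whose level-$0$ component is now a trivial cofibration in $\sset$, against~$p$.

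I construct the remaining lift $\ell\colon Y\to W$ by Reedy induction on the simplicial level $m$. At $m=0$, the trivial cofibration $(X\amalg R)_0\hookrightarrow Y_0$ lifts against the fibration $W_0\to Z_0$ from condition (2) of \cref{Ifibration}. At $m\geq 1$, the inductive step is phrased as a lifting problem whose left vertical is the relative latching map of $X\amalg R\hookrightarrow Y$, a monomorphism (since $X\amalg R\hookrightarrow Y$ is a monomorphism in $\Dsset$), and whose right vertical is the relative matching map $W_m\to Z_m\times_{\Map(\partial F[m], Z)}\Map(\partial F[m], W)$, a trivial fibration in $\sset$ by condition (1) of \cref{Ifibration}; the required compatible extension $\ell_m$ therefore exists. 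The main obstacle I anticipate is organising this Reedy induction cleanly so that the level-wise lifts assemble into a genuine morphism of simplicial spaces, but this is standard once phrased as a relative latching/matching lifting problem. The final assertion is obtained by applying the main characterisation to $\emptyset\to X$: condition (1) is vacuous, and condition (2) becomes the statement that $X_0$ is weakly equivalent to the set $R$.
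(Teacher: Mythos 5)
Your proof is correct, and while the forward direction is essentially the paper's argument, the converse takes a genuinely different route. For the forward direction both you and the paper show that the class of maps satisfying (1) and (2) contains the generators and is closed under the relevant saturation operations, checking (2) componentwise via \cref{lem:charcofib0}(ii); your analysis of how each type of generator acts in level $0$ is just a more explicit version of the same argument. For the converse, the paper factors $f$ by the small object argument as an $\cI$-cell map $i$ followed by an $\cI$-fibration $p$, shows by a path-component analysis in level $0$ that $p$ is in fact a Reedy trivial fibration, and then exhibits the monomorphism $f$ as a retract of $i$. You instead verify the left lifting property against an arbitrary $\cI$-fibration $p\colon W\to Z$ directly: first extend the top map over the adjoined set $R$ using the surjectivity of $W_{0,0}\to Z_{0,0}$ from \cref{Ifibration}(3), then build the lift by Reedy induction, using that the relative latching maps of the monomorphism $X\amalg R\hookrightarrow Y$ are monomorphisms, that the relative matching maps of $p$ are trivial fibrations for $m\geq 1$, and that in level $0$ one lifts a trivial cofibration against a fibration. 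This is a valid alternative: it makes transparent why the three conditions of \cref{Ifibration} are exactly what is needed and avoids the somewhat delicate verification that $p_0$ is a weak equivalence, on which the paper's shorter retract argument relies (and which itself reuses the first implication applied to $i$). One small point to make explicit: condition (2) only provides some weak equivalence $X_0\amalg R\to Y_0$, so before asserting that $X\amalg R\hookrightarrow Y$ is a monomorphism whose level-$0$ component is a trivial cofibration, you should normalize $R$ via \cref{lem:charcofib0} to consist of one chosen vertex in each component of $Y_0$ missed by $X_0$.
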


\begin{proof}
    Consider the class $\cA$ of all monomorphisms satisfying (2). We show that $\cA$ is equal to the class of all $\cI$-cofibrations.
    
    First note that $\cA$ contains all the monomorphisms in $\cI$. We show that $\cA$ is saturated. The fact that $\cA$ is closed under pushouts follows from the fact that taking pushout along a map in $\sset$ induces a left Quillen functor between slices. To show that $\cA$ is closed under transfinite compositions and retracts, we use the characterization~(ii) from \cref{lem:charcofib0}. If $f$ arises as a transfinite composition of maps in $\cA$, then each $f_0^{-1}(S)\to S$ is either a transfinite composition of trivial cofibrations, hence a weak equivalence, or $f_0^{-1}(S)=\emptyset$ and $S$ is a filtered colimit of weakly contractible complexes, hence weakly contractible. It follows that $f\in \cA$ as well. Finally, suppose that we have a retract diagram in $\Dsset$ as below left, where $f\in \cA$. Given a connected component $S \subseteq W_0$, we get an induced retract diagram in $\sset$ as below right.
    \begin{tz}
         \node[](1) {$Z$}; 
        \node[below of=1](2) {$W$}; 
        \node[right of=1](3) {$X$}; 
        \node[below of=3](4) {$Y$}; 
        \node[right of=3](5) {$Z$}; 
        \node[below of=5](6) {$W$}; 
        \draw[->](1) to (3);
        \draw[->](2) to node[below, la]{$i$} (4); 
        \draw[->](3) to (5);
        \draw[->](4) to node[below,la]{$r$} (6); 
        \draw[->](1) to node[left,la]{$g$} (2);
        \draw[->](3) to node[right,la]{$f$} (4);
        \draw[->](5) to node[right,la]{$g$} (6);

         \node[right of=5,xshift=2cm](1) {$g_0^{-1}(S)$}; 
        \node[below of=1](2) {$S$}; 
        \node[right of=1,xshift=1cm](3) {$f_0^{-1}(T)$}; 
        \node[below of=3](4) {$T$}; 
        \node[right of=3,xshift=1cm](5) {$g_0^{-1}(S)$}; 
        \node[below of=5](6) {$S$}; 
        \draw[->](1) to (3);
        \draw[->](2) to (4); 
        \draw[->](3) to (5);
        \draw[->](4) to (6); 
        \draw[->](1) to node[left,la]{$g_0$} (2);
        \draw[->](3) to node[right,la]{$f_0$} (4);
        \draw[->](5) to node[right,la]{$g_0$} (6);
    \end{tz}
    Here we write $T\subseteq r_0^{-1}(S)\subseteq Y_0$ for the connected component of $Y_0$ containing the image $i_0(S)$ of the connected component $S\subseteq W_0$. By assumption, we have either that $f_0^{-1}(T)\to T$ is a weak equivalence in $\sset$ or that $f_0^{-1}(T)=\emptyset$ and $T\simeq \Delta[0]$. The result then follows from the fact that both of these conditions are closed under retracts. As the class of $\cI$-cofibrations is the smallest saturated class containing~$\cI$, we get that $\cI\text{-}\mathrm{cof}\subseteq \cA$.
    
Conversely, let $f\colon X\hookrightarrow Y$ be a monomorphism in $\cA$. Let $R$ be the set of path components of $Y_0$ that are not in the image of $f_0$, and choose a map $R\to Y_0$ selecting exactly one vertex in each such path component. The map $f$ then factors as an $\cI$-cofibration $X\hookrightarrow X\amalg R$ followed by a monomorphism $f'\colon X\amalg R\hookrightarrow Y$ such that $f'_0$ is a weak equivalence; in particular, $f'$ is also in~$\cA$, and it suffices to verify that it is contained in $\cI\text{-}\mathrm{cof}$. 

We now apply the small object argument with respect to the set of maps $\cI'=\cI\setminus\{\emptyset\to F[0]\}$, in order to factor the map $f'$ into an $\cI'$-cofibration $i\colon X\amalg R\hookrightarrow Z$ followed by an $\cI'$-fibration $p\colon Z\to Y$. We claim that $p$ is a Reedy trivial fibration. Using this and the fact that $f'$ is a monomorphism, we get that $f'$ is a retract of $i$. It follows that $f'$ is an $\cI'$-cofibration and hence an $\cI$-cofibration.

To see that $p$ is a Reedy trivial fibration, note that the right lifting property against the maps in $\cI'$ implies that $p$ satisfies properties (1) and (2) of \cref{Ifibration}. It therefore remains to check that $p_0\colon Z_0\to Y_0$ is a weak equivalence in $\sset$, or equivalently, by the 2-out-of-3 property, that $i_0\colon X_0\amalg R\to Z_0$ is a weak equivalence. This follows from that fact that every map in $\cI'$ induces a trivial cofibration in $\sset$ in degree $0$, so that $i_0\colon X_0\amalg R\to Z_0$ is a trivial cofibration in $\sset$.
\end{proof}

\subsection{Anodyne extensions and fibrations between fibrant objects} \label{sec:fib}

We now introduce the set $\cJ$ of generating anodyne extensions.

\begin{notation} \label{notn:J}
    Let $\cJ$ denote the set of maps in $\Dsset$ containing
    \begin{rome}
        \item for all $m\geq 0$, $n\geq 1$, and $0\leq k\leq n$, the monomorphism 
        \[ \partial F[m]\times \Delta[n]\amalg_{\partial F[m]\times \Lambda^k[n]} F[m]\times \Lambda^k[n]\hookrightarrow F[m]\times \Delta[n], \]
        \item for all $m\geq 2$ and $n\geq 0$, the monomorphism
        \[ \Sp[m]\times \Delta[n]\amalg_{\Sp[m]\times \partial\Delta[n]} F[m]\times \partial\Delta [n]\hookrightarrow F[m]\times \Delta[n], \]
        \item either inclusion $F[0]\hookrightarrow \NI$.
    \end{rome}
    We write $\cJ_0\subseteq \cJ$ for the subset of monomorphisms of the form (i) and (ii).
\end{notation}
\begin{rem} \label{rem:J0fib}
Every $\cJ_0$-cofibration is in particular a trivial cofibration in $\Dsset_{\mathrm{Seg}}$. In fact, the $\cJ_0$-cofibrations are precisely the maps in $\Dsset_{\mathrm{Seg}}$ that determine the fibrant objects and the fibrations between fibrant objects. In particular, a $\cJ_0$-fibrant object is a Segal space and a $\cJ_0$-fibration between $\cJ_0$-fibrant objects is a Reedy fibration. 
\end{rem}

We now aim to show that $\cJ$-fibrant objects and $\cJ$-fibrations between them align with the descriptions given in (ii) and (iv) of \cref{thm:MS}. For this, we first prove that $\cJ$-fibrant replacement can be computed as $\cJ_0$-fibrant replacements. 

\begin{lemma}\label{lem:cJ0}
Let $X$ be a simplicial space. Then the following assertions hold:
\begin{rome}
    \item the simplicial space $X$ is $\cJ$-fibrant if and only if it is $\cJ_0$-fibrant,
    \item there is a $\cJ$-fibrant replacement $j_X\colon X\to \widetilde{X}$, where $j_X$ is a $\cJ_0$-cofibration.
\end{rome}
\end{lemma}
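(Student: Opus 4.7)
\smallskip

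\textbf{Proof proposal.} The plan is that part (i) is the substantive assertion and part (ii) is then a formal consequence of the small object argument applied to $\cJ_0$.

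For part (i), the forward direction is immediate since $\cJ_0\subseteq \cJ$. For the converse, suppose $X$ is $\cJ_0$-fibrant. By \cref{rem:J0fib}, $X$ is then a Segal space and the unique map $X\to *$ is a Reedy fibration. The induced functor $\ho X\to \ho *=*$ is trivially an isofibration of categories, so that $X\to *$ is an isofibration in the sense of \cref{sec:isofib}. Applying \cref{lem:isofib}, we conclude that $X\to *$ has the right lifting property against either inclusion $F[0]\hookrightarrow \NI$. Combined with $\cJ_0$-fibrancy, this shows $X$ is $\cJ$-fibrant.

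For part (ii), I would apply the small object argument to the set $\cJ_0$ to factor the map $X\to *$ as
\[ X\xrightarrow{j_X} \widetilde{X}\to *, \]
where $j_X$ is a $\cJ_0$-cofibration and $\widetilde{X}\to *$ is a $\cJ_0$-fibration. Then $\widetilde{X}$ is $\cJ_0$-fibrant and hence $\cJ$-fibrant by part (i). Since $\cJ_0\subseteq \cJ$ we have $\cJ_0\text{-}\mathrm{cof}\subseteq \cJ\text{-}\mathrm{cof}$, so $j_X$ is in particular a $\cJ$-cofibration, yielding the desired $\cJ$-fibrant replacement.

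The only nontrivial step is part (i), and it is not an obstacle but rather a direct appeal to \cref{lem:isofib}: once one observes that any Segal space is ``isofibrant over the terminal object'' in a trivial way, the anodyne extensions of type (iii) in \cref{notn:J} are automatically satisfied, so the set $\cJ$ contributes nothing beyond $\cJ_0$ for fibrancy of objects (though of course it does affect fibrations between them).
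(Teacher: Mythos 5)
Your proof is correct, and part (ii) coincides with the paper's argument (small object argument applied to $\cJ_0$, plus the observation that $\cJ_0\text{-}\mathrm{cof}\subseteq \cJ\text{-}\mathrm{cof}$). For part (i), however, you take a noticeably heavier route than the paper. You argue that a $\cJ_0$-fibrant object is a Segal space (\cref{rem:J0fib}), that $X\to \ast$ is then an isofibration since $\ho X\to \ho\ast$ is trivially one, and you invoke \cref{lem:isofib} to convert this into the right lifting property against $F[0]\hookrightarrow \NI$; this is valid, but \cref{lem:isofib} rests on the underlying quasi-category machinery and Joyal's lifting theorem. The paper instead uses a one-line observation: the inclusion $F[0]\hookrightarrow \NI$ admits a retraction $\NI\to F[0]$, so \emph{every} simplicial space, fibrant or not, has the extension property against it, and hence the maps of type (iii) in \cref{notn:J} impose no condition on objects at all. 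Besides being more elementary, this formulation gives slightly more: the equivalence $\cJ$-fibrant $\Leftrightarrow$ $\cJ_0$-fibrant needs no Segal or Reedy fibrancy hypothesis as input, whereas your argument only establishes the (sufficient, but less transparent) implication for $\cJ_0$-fibrant $X$. Both proofs are sound; yours simply spends a nontrivial result where a split monomorphism suffices.
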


\begin{proof}
Since the map $F[0]\hookrightarrow \NI$ has a retraction, every simplicial space has the extension property against $F[0]\hookrightarrow \NI$. This shows (i). For (ii), we can apply the small object argument with respect to $\cJ_0$ to provide the desired $\cJ$-fibrant replacement.
\end{proof}

\begin{prop} \label{Jfibration}
A map $f\colon X\to Y$ in $\Dsset$ between $\cJ$-fibrant objects is a $\cJ$-fibration if and only if the following conditions hold:
\begin{enumerate}
    \item the simplicial spaces $X,Y$ are Segal spaces,
    \item the map $f$ is an isofibration. 
\end{enumerate}
In particular, a simplicial space $X$ is $\cJ$-fibrant if and only if it is a Segal space.
\end{prop}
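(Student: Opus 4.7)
The proof is essentially a direct unpacking of definitions, using the results already established. The plan is to combine \cref{lem:cJ0} (which identifies $\cJ$-fibrant objects with $\cJ_0$-fibrant objects), \cref{rem:J0fib} (which identifies $\cJ_0$-fibrations between $\cJ_0$-fibrant objects with Reedy fibrations between Segal spaces), and \cref{lem:isofib} (which characterizes isofibrations via the lifting property against $F[0]\hookrightarrow \NI$).

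For the ``in particular'' clause, I would first note that a simplicial space $X$ is $\cJ$-fibrant if and only if the map $X\to *$ is a $\cJ$-fibration. By \cref{lem:cJ0}(i), this holds if and only if $X$ is $\cJ_0$-fibrant, which by \cref{rem:J0fib} is equivalent to $X$ being a Segal space. This gives (1) for free whenever $f$ is a $\cJ$-fibration between $\cJ$-fibrant objects, and verifies the final statement of the proposition.

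For the main equivalence, assume first that $f$ is a $\cJ$-fibration between $\cJ$-fibrant objects. Since $\cJ_0\subseteq \cJ$, the map $f$ in particular has the right lifting property against $\cJ_0$, so by \cref{rem:J0fib} it is a Reedy fibration between Segal spaces. Moreover, $f$ has the right lifting property against the inclusion $F[0]\hookrightarrow \NI\in \cJ$, so by \cref{lem:isofib} it is an isofibration. Conversely, assume $X,Y$ are Segal spaces and $f$ is an isofibration. By \cref{rem:J0fib}, the Reedy fibration $f$ has the right lifting property against every map in $\cJ_0$; by \cref{lem:isofib}, it has the right lifting property against $F[0]\hookrightarrow \NI$. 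Thus $f$ has the right lifting property against every map in $\cJ=\cJ_0\cup \{F[0]\hookrightarrow \NI\}$, so it is a $\cJ$-fibration.

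There is no substantial obstacle here: all the work has been done in \cref{sec:isofib} and in the preceding lemma and remark. The only thing to be careful about is to invoke \cref{lem:isofib} only for Reedy fibrations between Segal spaces, which is precisely the setting in which we apply it.
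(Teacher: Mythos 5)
Your proposal is correct and follows exactly the paper's own argument: the paper likewise deduces the statement by combining \cref{lem:cJ0}~(i), \cref{rem:J0fib}, and \cref{lem:isofib}, using that a $\cJ$-fibration is by definition a $\cJ_0$-fibration with the right lifting property against $F[0]\hookrightarrow \NI$. Your write-up simply spells out the same steps in more detail.
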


\begin{proof}
As a $\cJ$-fibration is by definition a $\cJ_0$-fibration with the right lifting property against $F[0]\hookrightarrow \NI$, the result follows from  \cref{lem:isofib}, \cref{lem:cJ0} (i), and \cref{rem:J0fib}.
\end{proof}

We now prove a useful property of $\cJ_0$-cofibrations, which can be applied in particular to the $\cJ$-fibrant replacements from \cref{lem:cJ0}.

\begin{lemma} \label{J0coflevel01}
    If $X\to Y$ in $\Dsset$ is a $\cJ_0$-cofibration, then the induced map $X_0\to Y_0$ is a trivial cofibration in $\sset$.
\end{lemma}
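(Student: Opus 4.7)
The plan is to apply the level-$0$ functor $(-)_0\colon \Dsset\to \sset$ from \cref{not:cosk} and show that it sends every map in $\cJ_0$ to a trivial cofibration in $\sset$. Since $(-)_0$ is a left adjoint (to $\cosk_0$), it preserves all colimits, in particular pushouts and transfinite compositions. Combined with the fact that the class of trivial cofibrations in $\sset$ is saturated, this will imply that any $\cJ_0$-cofibration, being a retract of a transfinite composition of pushouts of maps in $\cJ_0$, is sent by $(-)_0$ to a trivial cofibration in $\sset$.

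The key computational inputs would be the following identifications at level $0$: the space $F[m]_0$ is the discrete simplicial set on $m+1$ vertices; one has $(\partial F[m])_0 = F[m]_0$ for $m\geq 1$, since every map $[0]\to [m]$ is non-surjective in that range; $(\partial F[0])_0 = \emptyset$; and $(\Sp[m])_0 = F[m]_0$ for $m\geq 1$, since the spine already contains all vertices.

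From these identifications, I would inspect the generating maps of $\cJ_0$ case by case. For a map of type (i) with $m\geq 1$, the inclusion $(\partial F[m])_0\times \Lambda^k[n]\to F[m]_0\times \Lambda^k[n]$ becomes an isomorphism, so the pushout at level $0$ collapses to $F[m]_0\times \Delta[n]$ and the resulting map is the identity, which is trivially a trivial cofibration. For a map of type (i) with $m=0$, the pushout at level $0$ reduces to $F[0]_0\times \Lambda^k[n]\cong \Lambda^k[n]$ mapping to $\Delta[n]$, i.e., the horn inclusion $\Lambda^k[n]\hookrightarrow \Delta[n]$, which is a trivial cofibration in $\sset$. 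For a map of type (ii) (necessarily with $m\geq 2$), the same argument as the first case, now using $(\Sp[m])_0 = F[m]_0$, shows that the map becomes the identity at level $0$.

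There is no real obstacle here beyond carefully unpacking the pushouts at level $0$; the essential observation is that the only generating maps in $\cJ_0$ contributing non-identity data at level $0$ are those of family (i) with $m=0$, which yield exactly the horn inclusions that generate trivial cofibrations in $\sset$.
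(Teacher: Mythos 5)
Your proof is correct and follows essentially the same route as the paper: both arguments reduce to the facts that the level-$0$ functor preserves colimits and retracts (colimits in $\Dsset$ being computed levelwise) and that trivial cofibrations in $\sset$ form a saturated class, so it suffices to check the generators of $\cJ_0$ at level $0$. Your explicit identification of these level-$0$ maps (identities for type (i) with $m\geq 1$ and for type (ii), horn inclusions for type (i) with $m=0$) is exactly the verification the paper leaves implicit when asserting $\cJ_0\subseteq\cA$.
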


\begin{proof}
    Consider the class $\cA$ of all monomorphisms $X\hookrightarrow Y$ in $\Dsset$ such that the induced map $X_0\hookrightarrow Y_0$ is a trivial cofibration in $\sset$. We show that every $\cJ_0$-cofibration is in $\cA$. Note that $\cJ_0\subseteq \cA$ and that $\cA$ is saturated since colimits and retracts in $\Dsset$ can be computed levelwise in $\sset$ and the class of trivial cofibrations in $\sset$ is saturated. As the class of $\cJ_0$-cofibrations is the smallest saturated class containing~$\cJ_0$, we get that $\cJ_0\text{-}\mathrm{cof}\subseteq \cA$, as desired. 
\end{proof}

\subsection{Weak equivalences} \label{sec:we}

Finally, we introduce the class of weak equivalences. To get the desired description of the weak equivalences between fibrant objects stated in (iii) of \cref{thm:MS}, we define $\cW_f$ to be the class of Dwyer--Kan equivalences between Segal spaces.

With this definition, we now show that Conditions \ref{2of6Wf} and \ref{accessibility} of \cref{thm:main} hold.

\begin{prop}
    The class $\cW_f$ of Dwyer--Kan equivalences between Segal spaces satisfies the $2$-out-of-$6$ property. 
\end{prop}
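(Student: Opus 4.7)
The plan is to verify the 2-out-of-6 property separately for the two defining conditions of a Dwyer--Kan equivalence: homotopical full faithfulness and essential surjectivity on homotopy categories. Given composable maps $A \xrightarrow{f} B \xrightarrow{g} C \xrightarrow{h} D$ between Segal spaces such that $gf$ and $hg$ are Dwyer--Kan equivalences, I want to deduce that $f$, $g$, $h$, and $hgf$ are all Dwyer--Kan equivalences.

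For the essential surjectivity condition, I will pass to the homotopy categories and invoke the 2-out-of-6 property for equivalences of categories. The key observation is that $\ho(gf) = \ho(g)\ho(f)$ and $\ho(hg) = \ho(h)\ho(g)$ are already full equivalences of categories, not merely essentially surjective functors: fully faithfulness on hom sets follows by applying $\pi_0$ to the weak equivalences of mapping spaces guaranteed by $gf$ and $hg$ being homotopically fully faithful. Since equivalences of categories are exactly the isomorphisms in the localization of $\cat$ at the equivalences, and isomorphisms in any category satisfy 2-out-of-6, I conclude that $\ho(f)$, $\ho(g)$, $\ho(h)$, and $\ho(hgf)$ are all equivalences of categories, and in particular essentially surjective.

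For homotopical full faithfulness at objects of $A$, I fix $x, y \in A_{0,0}$ and consider the induced sequence
\[\map_A(x,y) \xrightarrow{f_*} \map_B(fx, fy) \xrightarrow{g_*} \map_C(gfx, gfy) \xrightarrow{h_*} \map_D(hgfx, hgfy)\]
of maps in $\sset$. By hypothesis, the consecutive pair composites $g_*f_*$ and $h_*g_*$ are weak equivalences, so applying 2-out-of-6 in $\sset$ (which holds because $\sset$ is a model category) shows that all three individual maps and the total composite are weak equivalences. This immediately proves that $f$ and $hgf$ are homotopically fully faithful on pairs of objects of $A$, and also establishes the homotopically fully faithful condition for $g$ at pairs of the form $(fx, fy)$ and for $h$ at pairs of the form $(gfx, gfy)$.

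The main obstacle is extending the homotopically fully faithful condition of $g$ and $h$ to arbitrary pairs of objects. Here I will use the essential surjectivity of $\ho(f)$, respectively $\ho(gf)$, obtained in the second paragraph. Given $u, v \in B_{0,0}$, I can find $x, y \in A_{0,0}$ together with equivalences $fx \to u$ and $fy \to v$ in $B_{\mathrm{hoeq}}$. Composition with these equivalences, which admit homotopy inverses, induces weak equivalences $\map_B(fx, fy) \simeq \map_B(u, v)$ and $\map_C(gfx, gfy) \simeq \map_C(gu, gv)$, a standard consequence of the Segal condition together with the existence of two-sided inverses for elements of $B_{\mathrm{hoeq}}$. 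Combined with the weak equivalence $g_*$ at $(fx, fy)$ from the previous paragraph, this yields the desired weak equivalence $\map_B(u, v) \simeq \map_C(gu, gv)$. The analogous argument, now using essential surjectivity of $\ho(gf)$, extends the homotopically fully faithful condition of $h$ to all of $C$, completing the proof.
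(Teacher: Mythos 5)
Your proof is correct and takes essentially the same route as the paper, whose entire proof is the one-line observation that the claim follows from the definition together with the 2-out-of-6 property for equivalences of categories and for weak equivalences in $\sset$. You additionally make explicit the nontrivial point that this one-liner hides, namely extending homotopical full faithfulness of $g$ and $h$ from pairs of objects in the image of $f$ (resp.\ $gf$) to arbitrary pairs, using essential surjectivity on homotopy categories and the standard fact that composition with homotopy equivalences induces weak equivalences on mapping spaces.
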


\begin{proof}
    This follows from the definition of a Dwyer--Kan equivalence, using that equivalences of categories and weak equivalences in $\sset$ satisfy the 2-out-of-6 property. 
\end{proof}

\begin{prop}
    There is a class $\overline{\cW}$ of maps in $\Dsset$ such that $\cW_f$ is the restriction of $\overline{\cW}$ to the maps between Segal spaces and $\overline{\cW}$ considered as a full subcategory of $(\Dsset)^{[1]}$ is accessible.
\end{prop}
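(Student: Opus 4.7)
The plan is to take $\overline{\cW}$ to be the class of weak equivalences of the complete Segal space model structure $\Dsset_\mathrm{CSS}$ constructed by Rezk.

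For the first condition, that $\cW_f$ is the restriction of $\overline{\cW}$ to maps between Segal spaces, I would invoke Rezk's theorem, recalled in the introduction of this paper: the weak equivalences in $\Dsset_\mathrm{CSS}$ between Segal spaces coincide with the Dwyer--Kan equivalences. Concretely, given Segal spaces $X, Y$ and a map $f\colon X\to Y$, one can functorially complete them to complete Segal spaces $\widehat{X},\widehat{Y}$ via Dwyer--Kan equivalences $X\to\widehat{X}$ and $Y\to\widehat{Y}$. Then $f$ lies in $\overline{\cW}$ iff the induced map $\widehat{f}\colon\widehat{X}\to\widehat{Y}$ is a levelwise weak equivalence between complete Segal spaces, iff $\widehat{f}$ is a Dwyer--Kan equivalence (by \cref{levelwiseDK}), iff $f$ is a Dwyer--Kan equivalence (by the 2-out-of-3 property for DK-equivalences between Segal spaces, established in the preceding proposition).

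For the second condition, note that $\Dsset_\mathrm{CSS}$ is obtained as a left Bousfield localization of the combinatorial Reedy model structure at a small set of maps, and is therefore itself a combinatorial model category. The class of weak equivalences of any combinatorial model category forms an accessible subcategory of the arrow category; this is a standard theorem of J.\,H.\ Smith (see, e.g., Proposition A.2.6.6 of \cite{Lurie}). Applied to $\Dsset_\mathrm{CSS}$, this yields the required accessibility of $\overline{\cW}$ as a full subcategory of $(\Dsset)^{[1]}$.

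The main obstacle is simply ensuring that these two black-box inputs---Rezk's identification of DK-equivalences between Segal spaces and Smith's general accessibility result---can be cleanly cited. If one prefers an argument that avoids the general accessibility result, an alternative is to define $\overline{\cW}$ as the preimage of $\cW_f$ under a functorial Segal-fibrant replacement $L\colon\Dsset\to\Dsset$ built from the small object argument applied to $\cJ_0$ (as in \cref{lem:cJ0}), and then combine the accessibility of $L$ with a hands-on accessibility verification for DK-equivalences between Segal spaces based on the characterization of \cref{charDK}(iii). The delicate point in that alternative route is showing that essential surjectivity on homotopy categories defines an accessible condition, which follows from the fact that the functor $\ho$ extends to an accessible functor from Segal spaces to $\cat$.
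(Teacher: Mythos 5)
Your proposal is correct, but it takes a genuinely different route from the paper. The paper does not use the complete Segal space model structure at all here: it defines $\overline{\cW}$ as the preimage, under the functor $\Phi\colon (\Dsset)^{[1]}\to \cat^{[1]}\times\sset^{[1]}$ sending $X\to Y$ to the pair $(cRX\to cRY,\; X_1\to Y_1\times_{Y_0^{\times 2}}X_0^{\times 2})$, of the classes of equivalences of categories and weak equivalences of simplicial sets; accessibility then follows because $\Phi$ preserves filtered colimits and these two classes are weak equivalences of combinatorial model structures, and the identification of the restriction with $\cW_f$ follows from \cref{lem:charhtpycat} and \cref{charDK}. Your choice of $\overline{\cW}$ as the CSS weak equivalences works just as well and is not circular, since Rezk's results are independent of the model structure being constructed: the restriction property is exactly Rezk's theorem that CSS weak equivalences between Segal spaces are the Dwyer--Kan equivalences (recorded later in the paper as \cref{thm:MSCSS}(iii)), and accessibility follows from the standard fact that the weak equivalences of any combinatorial model category form an accessible subcategory of the arrow category, applied to the Bousfield localization $\Dsset_{\mathrm{CSS}}$. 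The trade-off is that your argument outsources more: it leans on Rezk's completion theorem (a substantially heavier input than the elementary mapping-space and homotopy-category lemmas the paper uses) and applies the general accessibility theorem to the localized model structure, whereas the paper only needs it for $\cat$ and $\sset$; in exchange, your proof is shorter and avoids introducing $\Phi$. Two small points: in your ``concrete'' chain of equivalences, \cref{levelwiseDK} only gives that levelwise equivalences between Segal spaces are Dwyer--Kan; the converse direction for complete Segal spaces is itself part of Rezk's theorem, so that paragraph is redundant once you cite Rezk wholesale. Also, double-check the numbering of the accessibility statement in Lurie's appendix; the fact itself (due to Smith) is not in question.
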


\begin{proof}
    Consider the functor $\Phi\colon (\Dsset)^{[1]}\to \cat^{[1]}\times \sset^{[1]}$ sending a map of simplicial spaces $X\to Y$ to the tuple of $cRX\to cRY$ and $X_1\to Y_1\times_{Y_0^{\times2}} X_0^{\times 2}$. We define $\overline{\cW}$ to be the inverse image under $\Phi$ of the full subcategory $\cW_{\mathrm{Cat}}\times \cW_{\sset}$ of tuples of an equivalence of categories and a weak equivalence in $\sset$. Both of these are accessible subcategories since they form the weak equivalences of a combinatorial model structure. Since $\Phi$ preserves filtered colimits, it follows that $\overline{\cW}$ is an accessible subcategory as well by \cite[Corollary A.2.6.5]{Lurie}. Finally, \cref{lem:charhtpycat,charDK} imply that a map between Segal spaces $f\colon X\to Y$ is contained in $\overline{\cW}$ if and only if it is a Dwyer--Kan equivalence.
\end{proof}

\subsection{Path objects} \label{sec:pathobj}

We now show that Condition \ref{path} of \Cref{thm:main} holds. This follows from the following result, using \cref{Jfibration}.

\begin{prop} \label{prop:path}
    Let $X$ be a Segal space. The factorization of the diagonal morphism 
    \[ X \to X^{\NI} \to X\times X\]
    induced by $F[0]\amalg F[0]\hookrightarrow \NI\rightarrow F[0]$ is such that $X\to X^{\NI}$ is a Dwyer--Kan equivalence and $X^{\NI}\to X\times X$ is an isofibration. 
\end{prop}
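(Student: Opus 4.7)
The plan is to establish the two claims separately.

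For the isofibration claim, the map $X \to F[0]$ is itself an isofibration between Segal spaces: indeed, $X$ is Reedy fibrant because it is a Segal space, and the induced functor $\ho X \to \ast$ is trivially an isofibration of categories. Applying Lemma \ref{technicalisofib} to this isofibration and using that $F[0]^{\NI} \cong F[0]$, we conclude that
\[
X^{\NI} \to F[0]^{\NI} \times_{F[0] \times F[0]} (X \times X) \cong X \times X
\]
is an isofibration, as required.

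For the Dwyer--Kan equivalence claim, the strategy is to reduce, via the 2-out-of-3 property for Dwyer--Kan equivalences, to showing that the evaluation map $X^{\NI} \to X$ induced by one of the inclusions $F[0] \hookrightarrow \NI$ is a Dwyer--Kan equivalence. Such an evaluation is a Reedy fibration between Segal spaces by cartesian closedness of $\Dsset_\mathrm{Seg}$ applied to this cofibration and the fibrant object $X$; moreover, since each inclusion $F[0] \hookrightarrow \NI$ is a section of $\NI \to F[0]$, the composite $X \to X^{\NI} \to X$ is the identity, so that $X \to X^{\NI}$ is a section of this evaluation.

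To show that $X^{\NI} \to X$ is a Dwyer--Kan equivalence, essential surjectivity on homotopy categories is immediate: every vertex $x \in X_{0, 0}$ lifts to the constant map $\NI \to X$ at $x$ in $(X^{\NI})_{0, 0}$. For homotopical full faithfulness, I would apply the characterization in Lemma \ref{charDK}(iv), which (by adjunction) reduces the problem to showing that for each $m \geq 1$, the pushout-product inclusion
\[
F[m] \sqcup_{\partial F[m]} (\partial F[m] \times \NI) \hookrightarrow F[m] \times \NI
\]
induces a weak equivalence after applying $\Map(-, X)$.

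The main obstacle is verifying this matching-map condition: although $F[0] \hookrightarrow \NI$ is a cofibration in $\Dsset_\mathrm{Seg}$, it fails to be a trivial cofibration there (its level-$0$ part is $\Delta[0] \hookrightarrow \{0, 1\}$), so cartesian closedness of $\Dsset_\mathrm{Seg}$ alone does not suffice. The key input will be Lemma \ref{lemma:undgpd}, which lets us factor maps from $\NI$ (and from $\NI \times \NI$) into the Segal space $X^{F[m]}$ through its underlying Segal groupoid $(X^{F[m]})^{\simeq}$, whose level-$0$ simplicial set is a Kan complex. Combined with the observation that $\NI_{-,0}$ is the weakly contractible nerve of the groupoid $I[1]$, one may then recognize the matching map as a weak equivalence of Kan complexes.
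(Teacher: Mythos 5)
Your isofibration half is exactly the paper's argument: apply \cref{technicalisofib} with target the terminal simplicial space $F[0]$. For the Dwyer--Kan half the paper simply invokes \cite[Lemma 13.9]{Rezk}, whereas you attempt a direct proof. Your reductions are sound as far as they go: $X^{\NI}$ is a Segal space, the candidate map is a section of the evaluation $X^{\NI}\to X$, so by $2$-out-of-$3$ for Dwyer--Kan equivalences it suffices to treat the evaluation; essential surjectivity is clear; and by \cref{charDK} homotopical full faithfulness is equivalent to the relative matching maps of the evaluation being weak equivalences for $m\geq 1$, i.e.\ to the pushout-product inclusions $F[m]\amalg_{\partial F[m]}(\partial F[m]\times \NI)\hookrightarrow F[m]\times\NI$ inducing trivial fibrations on $\Map(-,X)$ (note that by \cref{charDK} the single case $m=1$ would already suffice).

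The genuine gap is that this last step is only asserted, and the ingredients you name do not prove it. Factoring through underlying Segal groupoids via \cref{lemma:undgpd}(i) and invoking the contractibility of the simplicial set $\NI_{-,0}$ are equally available in the case $m=0$, where the corresponding assertion --- that $\Map(\NI,X)\to X_0$ is a weak equivalence --- is precisely the completeness condition and fails for general Segal spaces: for $X=\NI$ itself, $\Map(\NI,\NI)\cong\cat(I[1],I[1])$ is a discrete four-point space mapping onto the two-point space $\NI_0$. (Relatedly, the contractibility of $\NI_{-,0}$ lives in the space direction, while $F[0]\hookrightarrow\NI$ is far from a levelwise equivalence of simplicial spaces; this asymmetry is the whole point of completeness.) So any correct completion must use the relative structure for $m\geq 1$ in an essential way, and after passing to $(X^{F[m]})^{\simeq}\to (X^{\partial F[m]})^{\simeq}$ you are left with exactly the same pullback-hom problem, not an easier one. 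What is really needed is the nontrivial fact that a homotopy equivalence in a Segal space extends, essentially uniquely, to a map out of $\NI$ --- for instance in the form that the restriction $\Map(\NI,X)\to X_{\mathrm{hoeq}}$ along $F[1]\subseteq\NI$ is a trivial fibration. That is the content of the Rezk results the paper cites, and it does not follow from \cref{lemma:undgpd} together with contractibility of $\NI_{-,0}$; either quote it, as the paper does, or supply an argument of comparable depth at this point.
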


\begin{proof}
The map $X^{\NI}\to X\times X$ is an isofibration by \cref{technicalisofib}, taking $Y=\Delta[0]$. The map $X \to X^{\NI}$ is a Dwyer--Kan equivalence by \cite[Lemma 13.9]{Rezk}.
\end{proof}

\subsection{Isofibrations that are Dwyer--Kan equivalences are trivial fibrations} \label{sec:fibwearetrivfib}

We now show that Condition \ref{fibwe} of \cref{thm:main} holds. In light of \cref{Jfibration}, this condition is precisely the implication ``(ii) implies (i)'' in the following statement.

\begin{prop} \label{proofof4}
   The following are equivalent for a map $f\colon X\to Y$ between Segal spaces:
    \begin{rome}
    \item the map $f$ is an $\cI$-fibration,
    \item the map $f$ is an isofibration and a Dwyer--Kan equivalence,
    \item the map $f$ is a Reedy fibration, it is homotopically fully faithful, and the induced map $X_{0, 0}\to Y_{0, 0}$ is surjective.
    \end{rome} 
\end{prop}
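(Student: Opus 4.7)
The strategy is to prove the two equivalences $(i) \Leftrightarrow (iii)$ and $(ii) \Leftrightarrow (iii)$ separately, since both are more or less direct unpackings combined with one small categorical observation.

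For $(i) \Leftrightarrow (iii)$, I plan to compare the explicit description of $\cI$-fibrations from \cref{Ifibration} with the definition of a Reedy fibration. Recall that $f$ is a Reedy fibration iff the matching maps $X_m \to Y_m\times_{\Map(\partial F[m],Y)} \Map(\partial F[m],X)$ are fibrations in $\sset$ for all $m\geq 0$. Condition (iv) of \cref{charDK} says precisely that these matching maps are also weak equivalences for $m\geq 1$ iff $f$ is homotopically fully faithful. Combining, being a Reedy fibration and homotopically fully faithful is equivalent to: $X_0\to Y_0$ is a fibration and, for $m\geq 1$, the $m$-th matching map is a trivial fibration. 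Adding the surjectivity condition $X_{0,0}\twoheadrightarrow Y_{0,0}$ yields exactly conditions (1)--(3) of \cref{Ifibration}.

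For $(ii) \Leftrightarrow (iii)$, the key point is a small piece of $1$-categorical reasoning about the functor $\ho f\colon \ho X\to \ho Y$. By definition, (ii) says $f$ is a Reedy fibration, $\ho f$ is an isofibration, $f$ is homotopically fully faithful, and $\ho f$ is essentially surjective. The Reedy fibration and homotopically fully faithful parts match (iii) directly. The observation is that under the assumption that $\ho f$ is fully faithful (which follows from $f$ being homotopically fully faithful), the two conditions ``$\ho f$ is an isofibration and essentially surjective on objects'' and ``$\ho f$ is surjective on objects'' are equivalent. Indeed, if $\ho f$ is an isofibration and essentially surjective, then for each $y \in \ho Y$ we may pick $x\in \ho X$ with an isomorphism $\phi\colon fx\xrightarrow{\cong} y$ and lift $\phi$ along $\ho f$ to obtain $x'\in \ho X$ with $\ho f(x')=y$. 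Conversely, if $\ho f$ is surjective on objects and fully faithful, then given any isomorphism $\phi\colon y'\xrightarrow{\cong} \ho f(x)$ in $\ho Y$, choose $x'\in \ho X$ with $\ho f(x')=y'$ and use full faithfulness to lift $\phi$ to a unique morphism $\psi\colon x'\to x$ in $\ho X$, which is an isomorphism because $\ho f$ is fully faithful and $\phi$ is. This shows $\ho f$ is an isofibration; essential surjectivity is automatic. Finally, $\ho f$ is surjective on objects iff $X_{0,0}\to Y_{0,0}$ is surjective, since the object sets of $\ho X$ and $\ho Y$ are $X_{0,0}$ and $Y_{0,0}$.

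I do not anticipate any real obstacle: the main content is the equivalence between ``isofibration $+$ essentially surjective'' and ``surjective on objects'' in the presence of full faithfulness, and everything else is a matter of assembling already-available characterizations (\cref{Ifibration,charDK} and the definitions of isofibration and Dwyer--Kan equivalence).
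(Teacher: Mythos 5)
Your proposal is correct and follows essentially the same route as the paper: the equivalence (i) $\Leftrightarrow$ (iii) is obtained by matching \cref{Ifibration} against the Reedy-fibration condition together with \cref{charDK}~(iv), and (ii) $\Leftrightarrow$ (iii) reduces to the elementary categorical facts that an isofibration which is essentially surjective is surjective on objects, and that a fully faithful functor surjective on objects is an isofibration. The paper's proof is just a more compressed version of exactly these observations, so there is nothing to add.
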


\begin{proof}
In all three cases, the map $f$ is in particular a Reedy fibration. 
The equivalence between (i) and (iii) then follows from \Cref{charDK} and \cref{Ifibration}.  If $f$ satisfies condition (iii), then $\ho X\to \ho Y$ is fully faithful and surjective on objects, so in particular an isofibration. Hence $f$ is an isofibration and a Dwyer--Kan equivalence. Conversely, suppose that $f$ is an isofibration and a Dwyer--Kan equivalence. To see that~$f$ satisfies (iii), the only nontrivial condition to check is that $X_{0, 0}\to Y_{0, 0}$ is surjective. This follows from the fact that the induced functor $\ho f$ is both an isofibration and an equivalence of categories, so it is surjective on objects.
\end{proof}

\subsection{Trivial fibrations are weak equivalences} \label{sec:trivfibarewe}

Finally, we show that Condition \ref{ax:trivfib} of \cref{thm:main} holds. For this, recall the coskeleton functor from \cref{not:cosk}.

\begin{prop} \label{pullbackalongIfib}
Let $p\colon K\to L$ be a fibration in $\sset$ between Kan complexes such that the induced map $p_0\colon K_0\to L_0$ is surjective. Then the functor 
\[
\cosk_0(p)^*\colon \Dsset_{\mathrm{Seg}\  /\cosk_0(L)}\to \Dsset_{\mathrm{Seg}\  /\cosk_0(K)}, \]
    obtained by taking pullback along the map $\cosk_0(p)\colon \cosk_0(K)\to \cosk_0(L)$, is left Quillen.
\end{prop}
\begin{proof}
 Recall that $\cosk_0(p)^*$ is a left adjoint. Moreover, it preserves monomorphisms, as it is also a right adjoint. In each degree, the map $\cosk_0(p)$ is given by the fibration $p^{\times m}\colon K^{\times m}\to L^{\times m}$ in $\sset$. Using that $\sset$ is right proper, we deduce that $\cosk_0(p)^*$ preserves levelwise weak equivalences of spaces. This shows that  
    \[ \cosk_0(p)^*\colon \Dsset_{\mathrm{Reedy}\ /\cosk_0(L)}\to \Dsset_{\mathrm{Seg}\ /\cosk_0(K)}\]
    is left Quillen. 
    To deduce that $\cosk_0(p)^*\colon \Dsset_{\mathrm{Seg}\ /\cosk_0(L)}\to \Dsset_{\mathrm{Seg}\ /\cosk_0(K)}$ is left Quillen, it suffices to show that it sends the inner horn inclusions $L^k[m]\hookrightarrow F[m]\to \cosk_0(L)$ for $0<k<m$ to weak equivalences in $\Dsset_{\mathrm{Seg}\ /\cosk_0(K)}$, by \cite[Theorem 3.3.19]{Hirschhorn} and \cite[Lemma 3.5]{JT}. Given $0<k<m$, consider the following pullback squares in $\Dsset$
\begin{tz}
        \node[](1) {$Q$}; 
        \node[right of=1,xshift=1cm](2) {$P$};
        \node[right of=2,xshift=1cm](2') {$\cosk_0(K)$};
        \node[below of=1](3) {$L^k[m]$}; 
        \node[below of=2](4) {$F[m]$}; 
        \node[below of=2'](4') {$\cosk_0(L)$}; 
        \pullback{2}; 
        \pullback{1};

        \draw[right hook->](1) to node[above, la]{$i$} (2); 
        \draw[right hook->](3) to (4); 
        \draw[->](2) to (2'); 
        \draw[->](4) to node[above, la]{$\alpha$} (4'); 
        \draw[->>](1) to (3); 
        \draw[->>](2) to (4);  
        \draw[->>](2') to node[right,la]{$\cosk_0(p)$} (4');
    \end{tz}
    Let us first show that $i$ is a trivial cofibration in $ \Dsset_{\mathrm{Seg}}$ when $K$ and $L$ are sets, so that all objects in the above diagram are contained in the subcategory $\set^{\Delta^{\op}}\subseteq \Dsset$. Let us write $K(i)=p^{-1}(\alpha(i))$ for the fibre of $p$ over the image of the $i$-th vertex of $F[m]$ in $\cosk_0(L)_0 = L$. Then $P=N\cC$ is the nerve of the category with:
    \begin{itemize}
        \item objects given by tuples $(i, x)$ with $0\leq i\leq m$ and $x\in K(i)$.
        \item exactly one morphism $(i, x)\to (j, y)$ whenever $i\leq j$, and no morphism otherwise.
    \end{itemize}
    Let $\sigma\colon (i_0, x_0)\to \dots \to (i_n, x_n)$ be an $n$-simplex in $P$. Then $\sigma$ is \emph{not} contained in $Q$ if and only if $\{0, \dots, \hat{k}, \dots, m\}\subseteq \{i_0, \dots, i_n\}$. Given such a non-degenerate chain that is not contained in $Q$, let us define its \emph{pivot} to be the first element $(i_a, x_a)$ in the chain with $i_a=k$; note that $\sigma$ may not have a pivot.
    
    Let us now fix an element $z\in K(k)$ in the (non-empty) fibre $K(k)$ and say that a non-degenerate chain $\sigma$ not contained in $Q$ is:
    \begin{itemize}
        \item \emph{bounding} if it has a pivot of the form $(k, x_a=z)$, and
        \item \emph{bounded} if not. In particular, a chain without any term of the form $(k, x)$ is bounded.
    \end{itemize}
    Every bounded chain $\sigma$ is an inner face of exactly one bounding chain $\tau$, obtained from $\sigma$ by adding a pivot $(k, z)$ as follows, where $i_a=k$ or $i_a=k+1$.
    \begin{tz}
        \node[](1) {$\sigma\colon (0, x_0)$}; 
        \node[right of=1,xshift=.2cm](2) {$\dots$};
        \node[right of=2, xshift=.5cm](3) {$(k-1, x_{a-1})$};
        \node[right of=3, xshift=.7cm](4) {};
        \node[right of=4, xshift=0.2cm](5) {$(i_a, x_{a})$};
        \node[right of=5](6) {$\dots$};
        \node[right of=6](7) {$(m, x_n)$};
        
        \node[below of=1, yshift=1cm](1') {$\tau\colon (0, x_0)$}; 
        \node[right of=1', xshift=.2cm](2') {$\dots$};
        \node[right of=2', xshift=.5cm](3') {$(k-1, x_{a-1})$};
        \node[right of=3', xshift=.7cm](4') {$(k, z)$};
        \node[right of=4', xshift=0.2cm](5') {$(i_a, x_{a})$};
        \node[right of=5'](6') {$\dots$};
        \node[right of=6'](7') {$(m, x_n)$};
               
        \draw[->](1) to (2); 
        \draw[->](2) to (3); 
        \draw[->](3) to (5); 
        \draw[->](5) to (6);
        \draw[->](6) to (7); 
        \draw[->](1') to (2'); 
        \draw[->](2') to (3'); 
        \draw[->](3') to (4'); 
        \draw[->](4') to (5'); 
        \draw[->](5') to (6');
        \draw[->](6') to (7'); 
    \end{tz}
    Conversely, if $\tau$ is a (by definition non-degenerate) bounding chain with pivot $(i_a, x_a)=(k, z)$, then $d_j(\tau)$ is a lower-dimensional bounding chain or contained in $P$ when $j\neq a$, while the inner face $d_a(\tau)$ is a bounded chain of lower dimension.
    It follows from this that $Q\hookrightarrow P$ is an iterated pushout of inner horn inclusions: proceeding by induction on the dimension, we can take pushouts along $L^a[n]\to F[n]$ to add a bounding chain $\tau$ of length $n$, together with its (unique) bounded inner face $d_a(\tau)$. Since every iterated pushout of inner horn inclusions is a trivial cofibration in $ \Dsset_{\mathrm{Seg}}$ by \cite[Lemma 3.5]{JT}, it follows that $Q\hookrightarrow P$ is a trivial cofibration.
    
    Finally, let us treat the case where $p\colon K\to L$ is a map of simplicial sets. Since $p$ is a Kan fibration and surjective on vertices, each map $p_n\colon K_n\to L_n$ is surjective. For $n\geq 0$, we regard $P_{-,n}$ and $Q_{-,n}$ as objects of $\sset^{\Dop}$ through the canonical inclusion $\set^{\Dop}\hookrightarrow \sset^{\Dop}$. Applying the previous argument, the induced map
    \[ Q_{-,n}=L^k[m]\times_{\cosk_0(L_n)} \cosk_0(K_n)\to F[m]\times_{\cosk_0(L_n)} \cosk_0(K_n)=P_{-,n} \]
    is a weak equivalence in $\Dsset_{\mathrm{Seg}}$. Moreover, these maps assemble into a natural transformation of functors $\Dop\to \sset^{\Dop}$ and the induced map $\mathrm{hocolim}_{[n]\in \Dop} Q_{-,n} \to \mathrm{hocolim}_{[n]\in \Dop} P_{-,n}$ between homotopy colimits is also a weak equivalence in $\Dsset_{\mathrm{Seg}}$. Since $\Dsset_{\mathrm{Seg}}$ is a simplicial model structure for the levelwise tensoring over $\sset$, these homotopy colimits can be computed by the diagonal by \cite[Theorem 15.11.6 and Theorem 18.7.6]{Hirschhorn}. Using that $P_{-,n}$ and $Q_{-,n}$ are constant in the space direction, we conclude that $i\colon Q\hookrightarrow P$ is a weak equivalence, as desired.
\end{proof}

\begin{lemma} \label{coskisIinj}
    The following assertions hold.
    \begin{rome}
        \item Let $p\colon K\to L$ be a fibration in $\sset$ between Kan complexes such that the induced map $p_{0}\colon K_{0}\to L_{0}$ is surjective. Then the induced map 
    \[ \cosk_0(p)\colon \cosk_0(K)\to \cosk_0(L) \]
    is an $\cI$-fibration between Segal spaces. 
        \item Let $f\colon X\to Y$ be an $\cI$-fibration. Then the map $X\to Y\times_{\cosk_0(Y_0)} \cosk_0(X_0)$ is a Reedy trivial fibration.
    \end{rome}
    
\end{lemma}

\begin{proof}
    This is straightforward from unpacking the $m$-th relative matching maps and Segal maps, and using \cref{Ifibration}. The $m$-th relative matching map of $X\to Y\times_{\cosk_0(Y_0)} \cosk_0(X_0)$ is isomorphic to the $m$-th matching map of $X\to Y$ if $m\geq 1$, and an isomorphism when $m=0$.
\end{proof}

We are now ready to show that Condition \ref{ax:trivfib} of \cref{thm:main} holds. 

\begin{prop}
    Let $f\colon X\to Y$ be an $\cI$-fibration. Then there is a $\cJ$-fibrant replacement of $f$ that is a Dwyer--Kan equivalence.
\end{prop}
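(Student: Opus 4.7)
The plan is to construct a $\cJ$-fibrant replacement of $f$ via a pullback of Segal spaces, using \cref{coskisIinj} and \cref{pullbackalongIfib} so that the resulting map between Segal spaces becomes an $\cI$-fibration, hence a Dwyer--Kan equivalence by \cref{proofof4}.

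First, I would use \cref{lem:cJ0} to pick a $\cJ_0$-cofibration $j_Y\colon Y\to \widetilde Y$ into a Segal space $\widetilde Y$; this is in particular a $\cJ$-cofibration, and by \cref{J0coflevel01} the induced map $(j_Y)_0\colon Y_0\to \widetilde Y_0$ is a trivial cofibration in $\sset$. Combined with the surjectivity of $f_{0,0}$ from \cref{Ifibration}, this yields that the induced map $\pi_0 X_0\to \pi_0 \widetilde Y_0$ is surjective. I would then factor the composite $X_0\to \widetilde Y_0$ in $\sset$ as a trivial cofibration $X_0\xhookrightarrow{\sim} K$ followed by a Kan fibration $K\twoheadrightarrow \widetilde Y_0$ that is additionally surjective on $0$-simplices; this latter surjectivity can be arranged by first freely attaching a copy of $\Delta[1]$ to $X_0$ at each $0$-simplex of $\widetilde Y_0$ outside the image (using $\pi_0$-surjectivity to ensure this remains a trivial cofibration on $X_0$), and then applying a standard Kan fibration factorization.

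Setting $\widetilde X := \widetilde Y\times_{\cosk_0(\widetilde Y_0)} \cosk_0(K)$, \cref{coskisIinj} identifies the map $\cosk_0(K)\to \cosk_0(\widetilde Y_0)$ as an $\cI$-fibration between Segal spaces; since $\cI$-fibrations are closed under pullback, the projection $\widetilde f\colon \widetilde X\to \widetilde Y$ is itself an $\cI$-fibration, and $\widetilde X$ is a Segal space as a pullback of Segal spaces along a Reedy fibration. Hence \cref{proofof4} identifies $\widetilde f$ as a Dwyer--Kan equivalence. The pullback universal property provides a natural map $X\to \widetilde X$, which I would then factor using the small object argument for $\cJ$ as $X\xrightarrow{j_X}\widetilde X''\xrightarrow{q}\widetilde X$; the target $\widetilde X''$ is automatically $\cJ$-fibrant (hence a Segal space), and the composite $\widetilde X''\xrightarrow{q}\widetilde X\xrightarrow{\widetilde f}\widetilde Y$ together with $j_X$ and $j_Y$ provides the desired $\cJ$-fibrant replacement of $f$.

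The main obstacle will be verifying that this composite $\widetilde f\circ q$ remains a Dwyer--Kan equivalence. By $2$-out-of-$3$ with the already-established $\widetilde f$, it suffices to show that the $\cJ$-fibration $q\colon \widetilde X''\to \widetilde X$ between Segal spaces is itself a Dwyer--Kan equivalence. The key input here is that the natural map $X\to \widetilde X$ is by construction a trivial cofibration at level $0$, while its behaviour at higher simplicial levels is controlled by the matching-map trivial fibration condition of $f$ from \cref{Ifibration}; together these should force $q$ to be a levelwise weak equivalence between Segal spaces, which is then a Dwyer--Kan equivalence by \cref{levelwiseDK}. Carefully propagating this compatibility through the pullback, probably by invoking \cref{pullbackalongIfib} applied to $K\to \widetilde Y_0$ in order to transport the essentially-trivial cofibration structure of $X\to \widetilde X$ into the $\cJ$-factorization, will be the technical heart of the argument.
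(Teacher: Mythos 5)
Your construction mirrors the paper's own proof quite closely (the paper takes $K=\widetilde X_0$ from a $\cJ_0$-factorization of $j_Y\circ f$ rather than building $K$ by hand, but the pullback $\widetilde Y\times_{\cosk_0(\widetilde Y_0)}\cosk_0(K)$, \cref{coskisIinj}, and \cref{proofof4} play exactly the roles you assign them). The genuine gap is that you never prove the one claim on which everything rests: that $q$ (equivalently, that the comparison map $X\to\widetilde X$) is a weak equivalence. You write that the level-$0$ behaviour and the matching-map conditions of $f$ ``should force'' this and defer it as the technical heart, but this step is precisely where the paper does its real work, and it is not a formal consequence of what you have set up. The missing argument is a pasting of levelwise homotopy pullbacks: (a) $X\to Y\times_{\cosk_0(Y_0)}\cosk_0(X_0)$ is a levelwise weak equivalence because the relative matching maps of the $\cI$-fibration $f$ are trivial fibrations (\cref{Ifibration}, in the form of \cref{charDK}~(iii)); (b) the square comparing $\cosk_0(f_0)$ with $\cosk_0(K\to\widetilde Y_0)$ is levelwise a homotopy pullback since $(j_Y)_0$ (by \cref{J0coflevel01}) and $X_0\hookrightarrow K$ are weak equivalences, $K\to \widetilde Y_0$ is levelwise a fibration after applying $\cosk_0$, and $\sset$ is right proper; pasting gives that $X\to Y\times_{\cosk_0(\widetilde Y_0)}\cosk_0(K)$ is a levelwise weak equivalence; (c) the remaining map $Y\times_{\cosk_0(\widetilde Y_0)}\cosk_0(K)\to\widetilde X$ is a weak equivalence in $\Dsset_\mathrm{Seg}$ by \cref{pullbackalongIfib}, being the pullback of the $\cJ_0$-cofibration $j_Y$ along $\cosk_0(K\to\widetilde Y_0)$. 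Without (a)--(c), or an equivalent argument, the proof is incomplete.

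There is also a fixable misstep in how you finish: you factor $X\to\widetilde X$ by the small object argument for $\cJ$, but a general $\cJ$-cofibration contains cells $F[0]\hookrightarrow\NI$ and is therefore not a weak equivalence in $\Dsset_\mathrm{Seg}$, so the $2$-out-of-$3$ argument you intend to run there does not yield that $q$ is a levelwise weak equivalence. You should instead factor using $\cJ_0$ only, as \cref{lem:cJ0} is designed to allow: $\cJ_0$-cofibrations are trivial cofibrations in $\Dsset_\mathrm{Seg}$ by \cref{rem:J0fib}, the middle object is still $\cJ$-fibrant by \cref{lem:cJ0}~(i), and then $2$-out-of-$3$ in $\Dsset_\mathrm{Seg}$ together with \cref{levelwiseDK} shows $q$ is a Dwyer--Kan equivalence, whence so is $\widetilde f\circ q$. (One could instead argue through $\Dsset_\mathrm{CSS}$ via \cref{thm:MSCSS}~(iii), but the paper deliberately stays within $\cJ_0$ and $\Dsset_\mathrm{Seg}$ in this section.) Two minor remarks: your whisker-attachment to make $K\to\widetilde Y_0$ surjective on vertices is unnecessary, since a Kan fibration that is surjective on $\pi_0$ is automatically surjective on $0$-simplices --- this is exactly how the paper argues that $\widetilde f_{0,0}$ is surjective --- and your claims that $\widetilde X$ is a Segal space and that $\widetilde f$ is an $\cI$-fibration, hence a Dwyer--Kan equivalence by \cref{proofof4}, are correct.
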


\begin{proof}
    By \cref{lem:cJ0}, we can choose the $\cJ$-fibrant replacements to be $\cJ_0$-fibrant replacements. Let $j_Y\colon Y\to \widetilde{Y}$ be a $\cJ_0$-fibrant replacement and factor the composite $j_Y\circ f$ as a $\cJ_0$-cofibration $j_X$ followed by a $\cJ_0$-fibration~$\widetilde{f}$, as below left. Then, consider the below right commutative diagram in $\Dsset$.
    \begin{tz}
        \node[](1) {$X$}; 
        \node[right of=1](2) {$Y$}; 
        \node[below of=1](3) {$\widetilde{X}$}; 
        \node[below of=2](4) {$\widetilde{Y}$}; 

        \draw[->](1) to node[above,la]{$f$} (2); 
        \draw[->](3) to node[below,la]{$\widetilde{f}$} (4); 
        \draw[->](1) to node[left,la]{$\cJ_0\text{-}\mathrm{cof}\ni j_X$} (3); 
        \draw[->](2) to node[right,la]{$j_Y\in \cJ_0\text{-}\mathrm{cof}$} (4); 
        
        \node[right of=2,xshift=2cm](1) {$X$}; 
        \node[below of=1](2) {$Y$}; 
        \node[right of=1,xshift=1cm](1') {$\cosk_0(X_0)$}; 
        \node[below of=1'](2') {$\cosk_0(Y_0)$}; 
        \node[right of=1',xshift=2cm](1'') {$\cosk_0(\widetilde{X}_0)$};
        \node[below of=1''](2'') {$\cosk_0(\widetilde{Y}_0)$};

        \draw[->>](1) to node[left,la]{$f$} (2); 
        \draw[->>](1') to node[left,la]{$\cosk_0(f_0)$} (2'); 
        \draw[->>](1'') to node[right,la]{$\cosk_0(\widetilde{f}_0)$} (2'');
        \draw[->](1) to (1'); 
        \draw[->](1') to node[above,la]{$\cosk_0((j_X)_0)$} (1''); 
        \draw[->](2) to (2'); 
        \draw[->](2') to node[below,la]{$\cosk_0((j_Y)_0)$} (2''); 
    \end{tz}
    Since $j_X$ and $j_Y$ are $\cJ_0$-cofibrations, the induced maps $(j_X)_0\colon X_0\to \widetilde{X}_0$ and $(j_Y)_0\colon Y_0\to \widetilde{Y}_0$ are weak equivalences in $\sset$ by \cref{J0coflevel01}. We therefore obtain levelwise weak equivalences $\cosk_0((j_X)_0)$ and $\cosk_0((j_Y)_0)$. In the right diagram above, the right-hand square is therefore levelwise a homotopy pullback in $\sset$. The left-hand square is also levelwise a homotopy pullback in $\sset$: the right vertical map is levelwise a fibration in $\sset$ (which is right proper) and $X\to Y\times_{\cosk_0(Y_0)} \cosk_0(X_0)$ is a levelwise weak equivalence in $\sset$ by Lemma \ref{coskisIinj} (ii). It follows that the composite of the two squares is levelwise a homotopy pullback in $\sset$, i.e., the map 
    \[ X\to Y\times_{\cosk_0(\widetilde{Y}_0)} \cosk_0(\widetilde{X}_0) = P\]
    is a levelwise weak equivalence. Now consider the induced diagram in $\Dsset$
    \begin{tz}
    \node[](0) {$X$}; 
    \node[right of =0,xshift=1cm](0') {$\widetilde{X}$};
        \node[below of=0,yshift=.5cm,xshift=1cm](1) {$P$}; 
        \node[below of=1](2) {$Y$}; 
        \node[below of=0',yshift=.5cm,xshift=1cm](1') {$Q$}; 
        \node[below of=1'](2') {$\widetilde{Y}$}; 
        \node[right of=1',xshift=1cm](1'') {$\cosk_0(\widetilde{X}_0)$};
        \node[below of=1''](2'') {$\cosk_0(\widetilde{Y}_0)$};
        \pullback{1}; 
        \pullback{1'};

        \draw[->](0) to (1); 
        \draw[->](0') to (1'); 
        \draw[->>](1) to (2); 
        \draw[->>](1') to (2'); 
        \draw[->>](1'') to node[right,la]{$\mathrm{cosk}(\widetilde{f}_0)$} (2'');
        \draw[->](0) to node[above,la]{$j_X$} (0');  
        \draw[->](1') to (1''); 
        \draw[->](2) to node[below,la]{$j_Y$} (2'); 
        \draw[->](2') to (2''); 
        \draw[->,bend right=20](0) to node[left,la,yshift=-8pt]{$f$} (2);
        \draw[->,bend right=20](0') to node[left,la,yshift=-8pt,xshift=-1pt]{$\widetilde{f}$} (2');
        
        \draw[-,line width=2mm,white](1) to (1');
        \draw[->](1) to (1');
    \end{tz}
    where $P$ and $Q$ are defined by taking pullbacks. Because $\widetilde{f}\colon \widetilde{X}\to \widetilde{Y}$ is a $\cJ_0$-fibration between $\cJ_0$-fibrant objects, \cref{rem:J0fib,J0coflevel01} show that the map $\widetilde{f}_0\colon \widetilde{X}_0\to \widetilde{Y}_0$ is a (Kan) fibrant replacement of the fibration $f_0$ in $\sset$. Since $f_{0,0}$ is surjective, so are the isomorphic maps $\pi_0(f_0)$ and $\pi_0(\widetilde{f}_0)$. Since $\widetilde{f}_0$ a fibration in $\sset$, the map $\widetilde{f}_{0,0}$ is then surjective as well. \Cref{coskisIinj} (i) therefore implies that $\cosk_0(\widetilde{f}_0)$ is an $\cI$-fibration, so that its pullback $Q\to \widetilde{Y}$ is an $\cI$-fibration between Segal spaces. Consequently, it is a Dwyer--Kan equivalence by \cref{proofof4}.    
    
    In addition, \cref{rem:J0fib,pullbackalongIfib} imply that $P\to Q$ is a weak equivalence in $\Dsset_\mathrm{Seg}$, as it is the pullback of the $\cJ_0$-cofibration $j_Y\colon Y\to \widetilde{Y}$ along $\cosk_0(\widetilde{f}_0)$. The map $X\to P$ is a levelwise weak equivalence by the above argument and the map $j_X\colon X\to \widetilde{X}$ is a $\cJ_0$-cofibration, and so a weak equivalence in $\Dsset_\mathrm{Seg}$ by \cref{rem:J0fib}. It follows from $2$-out-of-$3$ that $\widetilde{X}\to Q$ is a weak equivalence between Segal spaces in $\Dsset_\mathrm{Seg}$. By \cref{levelwiseDK}, the map $\widetilde{X}\to Q$ is then a Dwyer--Kan equivalence, so that the composite $\widetilde{f}\colon \widetilde{X}\to \widetilde{Y}$ is a Dwyer--Kan equivalence. 
\end{proof}

\section{Quillen equivalences with models of \texorpdfstring{$(\infty,1)$}{(infinity,1)}-categories} \label{sec:QE}

In this section, we show that the categorical model structure $\Dsset_{\mathrm{Cat}}$ constructed in \cref{thm:MS} is a model of $(\infty,1)$-categories. Specifically, in \cref{sec:QECSS}, we prove that it is Quillen equivalent to the complete Segal space model structure on 
$\Dsset$ via the identity adjunction. Additionally, in \cref{sec:QEPcat}, we compare it with the model of $(\infty,1)$-categories given by the \emph{Segal categories}. Denoting by $\cP\cat(\sset)$ the full subcategory of $\Dsset$ consisting of the simplicial spaces $X$ such that $X_0$ is a set, we show that the inclusion $\cP\cat(\sset) \hookrightarrow \Dsset_\mathrm{Cat}$ is both a left and a right Quillen equivalence when $\cP\cat(\sset)$ is endowed with the Segal category model structure. As a consequence, we get that the Segal category model structure is both left- and right-induced from the categorical model structure $\Dsset_{\mathrm{Cat}}$.

\subsection{Quillen equivalence with complete Segal spaces} \label{sec:QECSS}

We begin by showing that the categorical model structure that we constructed in \cref{thm:MS} is Quillen equivalent to Rezk's model for $(\infty,1)$-categories, given by the \emph{complete} Segal spaces. To this end, we first review their definition, as well as their associated model structure.

\begin{defn}
    A simplicial space $X\colon \Dop \to \sset$ is a \emph{complete Segal space} if it is a Segal space, and the completeness map $\Map(\NI,X)\to \Map(F[0],X)\cong X_0$ is a weak equivalence in $\sset$. 
\end{defn}

The following is a combination of \cite[Theorems 7.2 and 7.7]{Rezk}.

\begin{theorem} \label{thm:MSCSS}
    There is a cofibrantly generated model structure on the category $\Dsset$, which we denote by $\Dsset_{\mathrm{CSS}}$, in which 
    \begin{rome}
        \item the cofibrations are the monomorphisms, 
        \item the fibrant objects are the complete Segal spaces,
        \item the weak equivalences between Segal spaces are the Dwyer--Kan equivalences.
    \end{rome}
\end{theorem}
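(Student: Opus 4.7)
The plan is to obtain $\Dsset_{\mathrm{CSS}}$ as a left Bousfield localization of the Segal space model structure $\Dsset_{\mathrm{Seg}}$ of \cref{thm:MSSegal}. The Segal space model structure is itself a left Bousfield localization of the Reedy model structure (which is combinatorial and left proper) at the Segal maps, hence it inherits combinatoriality and left properness; Smith's theorem then guarantees the existence of a further left Bousfield localization at any set of morphisms.

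First I would localize $\Dsset_{\mathrm{Seg}}$ at the map $F[0]\hookrightarrow \NI$. The resulting cofibrantly generated model structure has the same cofibrations as $\Dsset_{\mathrm{Seg}}$, namely the monomorphisms, giving (i). Its fibrant objects are exactly the fibrant objects of $\Dsset_{\mathrm{Seg}}$ that are local with respect to $F[0]\hookrightarrow \NI$, i.e., the Segal spaces $X$ for which $\Map(\NI,X)\to \Map(F[0],X)\cong X_0$ is a weak equivalence in $\sset$. These are precisely the complete Segal spaces, establishing (ii).

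For (iii), given a map $f\colon X\to Y$ between Segal spaces, I would choose fibrant replacements $\eta_X\colon X\to \widehat{X}$ and $\eta_Y\colon Y\to \widehat{Y}$ in $\Dsset_{\mathrm{CSS}}$ together with an induced map $\widehat{f}\colon \widehat{X}\to \widehat{Y}$. By general properties of left Bousfield localization, $f$ is a weak equivalence in $\Dsset_{\mathrm{CSS}}$ if and only if $\widehat{f}$ is a levelwise weak equivalence, which by \cref{levelwiseDK} is equivalent to $\widehat{f}$ being a Dwyer--Kan equivalence. Combined with the 2-out-of-3 property for Dwyer--Kan equivalences, this reduces the statement to showing that the completion maps $\eta_X$ and $\eta_Y$ are themselves Dwyer--Kan equivalences.

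The hard part will be this last claim: every Segal space is Dwyer--Kan equivalent to its completion. A concrete strategy is to construct an explicit completion functor, for example by iteratively attaching copies of $\NI$ along maps out of the space of homotopy equivalences of a Segal space, and to verify directly that each such attachment induces weak equivalences on the mapping spaces $\map(x,y)$ and a surjection on isomorphism classes in $\ho$. This mapping-space analysis is essentially the content of Rezk's Theorem~7.7; once it is established, combining it with the Bousfield localization setup above completes the proof of all three clauses.
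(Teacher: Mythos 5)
Your localization setup for (i) and (ii) is fine and is the standard route (the paper itself does not reprove this theorem: it simply cites Rezk's Theorems 7.2 and 7.7, of which your Bousfield-localization argument is essentially a reconstruction of 7.2). The problem is in your treatment of (iii). You assert that $\widehat{f}$ being a levelwise weak equivalence is, ``by \cref{levelwiseDK}, equivalent to $\widehat{f}$ being a Dwyer--Kan equivalence.'' \cref{levelwiseDK} gives only one implication: a weak equivalence in $\Dsset_\mathrm{Seg}$ between Segal spaces is a Dwyer--Kan equivalence. The converse for the fibrant replacements---that a Dwyer--Kan equivalence between \emph{complete} Segal spaces is a levelwise weak equivalence---is exactly the direction you need to conclude that a Dwyer--Kan equivalence $f$ between Segal spaces is a weak equivalence in $\Dsset_\mathrm{CSS}$, and it does not follow from knowing that the completion maps $\eta_X,\eta_Y$ are Dwyer--Kan equivalences. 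It is a genuinely separate statement, and it is precisely where completeness is used (via the identification of $X_0$ with the space of homotopy equivalences, one upgrades homotopical fully faithfulness plus essential surjectivity to an equivalence $X_0\to Y_0$ and then to all levels). This step is a core part of Rezk's Theorem 7.7 and is simply absent from your outline.

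A second, lesser issue is the sketch of the completion. A fibrant replacement in the localized model structure is not obtained by attaching copies of $\NI$ along $F[0]\hookrightarrow \NI$ alone: the generating trivial cofibrations of the localization also include the pushout-products of this map with boundary inclusions, together with the Segal-type anodynes, and after each attachment one must re-fibrantly replace in $\Dsset_\mathrm{Seg}$ before the Dwyer--Kan condition even makes sense. Verifying that these attachments induce equivalences on mapping spaces is not a routine check---it is the substance of Rezk's completion theorem (his Theorem 14.1), which is why Rezk works with the explicit model $\widehat{X}$ built from $X^{E(\bullet)}$ rather than a cellular one. Since your plan ultimately defers both this and the missing direction above to ``the content of Rezk's Theorem 7.7,'' which is the very clause (iii) being proved, the argument as written is either circular or amounts to citing Rezk, which is what the paper does; if the intent is to give an independent proof, both the completion theorem and the statement that Dwyer--Kan equivalences between complete Segal spaces are levelwise equivalences must be proved, not assumed.
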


We now prove the desired result, using the following lemma.

\begin{lemma} \label{lem:weinCSStoSegal}
    If $f\colon X\to Y$ is a map in $\Dsset$ such that $f$ is a weak equivalence in $\Dsset_\mathrm{CSS}$, then $f$ is a weak equivalence in $\Dsset_\mathrm{Cat}$.
\end{lemma}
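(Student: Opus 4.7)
The plan is to reduce the statement to a comparison between maps of Segal spaces, where both model structures agree via Dwyer--Kan equivalences. Applying the functorial $\cJ_0$-fibrant replacement from \cref{lem:cJ0} to the map $f$ produces a commutative square with top row $f\colon X\to Y$, bottom row a map $\tilde{f}\colon \tilde{X}\to \tilde{Y}$ between Segal spaces, and vertical $\cJ_0$-cofibrations $j_X\colon X\to \tilde{X}$ and $j_Y\colon Y\to \tilde{Y}$.

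The crucial observation is that $j_X$ and $j_Y$ are weak equivalences simultaneously in $\Dsset_\mathrm{CSS}$ and in $\Dsset_\mathrm{Cat}$. By \cref{rem:J0fib}, every $\cJ_0$-cofibration is a trivial cofibration in $\Dsset_\mathrm{Seg}$, and hence also a trivial cofibration in $\Dsset_\mathrm{CSS}$, since the latter is Rezk's left Bousfield localization of $\Dsset_\mathrm{Seg}$ at the same class of monomorphism cofibrations. On the other hand, since $\cJ_0 \subseteq \cJ$, both $j_X$ and $j_Y$ are $\cJ$-cofibrations, i.e., trivial cofibrations in $\Dsset_\mathrm{Cat}$.

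The argument then concludes by two applications of the 2-out-of-3 property. Since $f$ is a weak equivalence in $\Dsset_\mathrm{CSS}$ by hypothesis, and $j_X, j_Y$ are weak equivalences in $\Dsset_\mathrm{CSS}$ by the above, so is $\tilde{f}$. Being a weak equivalence in $\Dsset_\mathrm{CSS}$ between Segal spaces, $\tilde{f}$ is a Dwyer--Kan equivalence by \cref{thm:MSCSS}(iii), and hence a weak equivalence in $\Dsset_\mathrm{Cat}$ by \cref{thm:MS}(iii). Combining this with the fact that $j_X, j_Y$ are weak equivalences in $\Dsset_\mathrm{Cat}$, a final 2-out-of-3 in $\Dsset_\mathrm{Cat}$ yields that $f$ is a weak equivalence in $\Dsset_\mathrm{Cat}$, as desired.

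The only subtlety is the need to coordinate the two model structures through a common fibrant replacement landing in Segal spaces; the set $\cJ_0$ is precisely the right vehicle for this coordination, as it lies in the generating trivial cofibrations of both $\Dsset_\mathrm{Seg}$ (and thus $\Dsset_\mathrm{CSS}$) and $\Dsset_\mathrm{Cat}$.
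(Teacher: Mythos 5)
Your proof is correct and follows essentially the same route as the paper: replace $f$ by a map $\tilde{f}$ between Segal spaces via a fibrant replacement whose legs are trivial cofibrations in both $\Dsset_\mathrm{CSS}$ and $\Dsset_\mathrm{Cat}$, identify $\tilde{f}$ as a Dwyer--Kan equivalence via \cref{thm:MSCSS}(iii), and conclude by 2-out-of-3 in $\Dsset_\mathrm{Cat}$. The only cosmetic difference is that you use the functorial $\cJ_0$-replacement (which cleanly avoids having to know that $F[0]\hookrightarrow \NI$ is a weak equivalence in $\Dsset_\mathrm{CSS}$), whereas the paper takes a $\cJ$-fibrant replacement $j_Y$ of $Y$ and factors $j_Y f$ into a $\cJ$-cofibration followed by a $\cJ$-fibration.
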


\begin{proof}
    Let $j_Y\colon Y\to \widetilde{Y}$ be a $\cJ$-fibrant replacement and factor the composite $j_Yf$ into a $\cJ$-cofibration $j_X\colon X\to \widetilde{X}$ followed by a $\cJ$-fibration $\tilde{f}\colon \widetilde{X}\to \widetilde{Y}$. Since $\cJ$-cofibrations are trivial cofibrations in $\Dsset_\mathrm{CSS}$, it follows by $2$-out-of-$3$ that $\tilde{f}$ is a weak equivalence in $\Dsset_\mathrm{CSS}$ between Segal spaces. By \cref{thm:MSCSS} (iii), the map $\tilde{f}$ is a Dwyer--Kan equivalence between Segal spaces, and hence a weak equivalence in  $\Dsset_\mathrm{Cat}$. As $\cJ$-cofibrations are in particular weak equivalences in  $\Dsset_\mathrm{Cat}$, the map $f$ is a weak equivalence in  $\Dsset_\mathrm{Cat}$ by the $2$-out-of-$3$ property.
\end{proof}

\begin{theorem} \label{thm:idQE}
    The identity adjunction
    \[ \id\colon \Dsset_{\mathrm{Cat}} \rightleftarrows \Dsset_{\mathrm{CSS}}\cocolon \id \]
    is a Quillen equivalence. 
\end{theorem}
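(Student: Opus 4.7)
My plan is to first verify the Quillen adjunction, then apply a standard criterion for Quillen equivalence.

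To show that the left identity $\id\colon \Dsset_\mathrm{Cat}\to \Dsset_\mathrm{CSS}$ is left Quillen, I need to check that it preserves cofibrations and trivial cofibrations. The former is immediate since the Cat cofibrations are monomorphisms by \cref{thm:MS}(i), and monomorphisms are cofibrations in $\Dsset_\mathrm{CSS}$ by \cref{thm:MSCSS}(i). For trivial cofibrations, by cofibrant generation of $\Dsset_\mathrm{Cat}$ it suffices to verify that each map in the generating set $\cJ$ from \cref{notn:J} is a trivial cofibration in $\Dsset_\mathrm{CSS}$. The maps of type~(i) are the generating trivial cofibrations of the Reedy model structure, hence also trivial cofibrations in its localization $\Dsset_\mathrm{CSS}$. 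The maps of type~(ii) are pushout-products of Segal spine inclusions $\Sp[m]\hookrightarrow F[m]$ with boundary inclusions $\partial\Delta[n]\hookrightarrow \Delta[n]$, hence trivial cofibrations in $\Dsset_\mathrm{Seg}$ by its cartesian closedness, and therefore also in $\Dsset_\mathrm{CSS}$. Finally, the map $F[0]\hookrightarrow \NI$ is a monomorphism that is a weak equivalence in $\Dsset_\mathrm{CSS}$, this being the essential content of the completeness condition built into $\Dsset_\mathrm{CSS}$.

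Next, I would invoke the standard criterion that a Quillen adjunction $F\dashv G$ is a Quillen equivalence if and only if $G$ reflects weak equivalences between fibrant objects and the derived unit is a weak equivalence at every cofibrant object. For the reflection condition, if $f\colon Y_1\to Y_2$ is a Cat weak equivalence between complete Segal spaces, then since $Y_1,Y_2$ are in particular Segal spaces, \cref{thm:MS}(iii) identifies $f$ as a Dwyer--Kan equivalence, and \cref{thm:MSCSS}(iii) then identifies it as a weak equivalence in $\Dsset_\mathrm{CSS}$. For the derived unit condition, any Cat-cofibrant object $X$ admits a CSS fibrant replacement $X\to RX$, which is a CSS weak equivalence and therefore a Cat weak equivalence by \cref{lem:weinCSStoSegal}.

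The main work is contained in the first step, specifically in recognizing the inclusion $F[0]\hookrightarrow \NI$ as a weak equivalence in $\Dsset_\mathrm{CSS}$; the rest is a formal assembly of \cref{thm:MS}(iii), \cref{thm:MSCSS}(iii), and \cref{lem:weinCSStoSegal}. Note that \cref{lem:weinCSStoSegal} provides only the inclusion of CSS weak equivalences into Cat weak equivalences, which is precisely what is needed on the derived unit, while the reverse inclusion between Segal spaces, needed for reflection, comes for free from the matching characterizations of weak equivalences in the two model structures.
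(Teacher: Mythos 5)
There is a genuine gap in your verification that $\id\colon \Dsset_\mathrm{Cat}\to \Dsset_\mathrm{CSS}$ is left Quillen. You argue that, ``by cofibrant generation of $\Dsset_\mathrm{Cat}$, it suffices to verify that each map in the generating set $\cJ$ of \cref{notn:J} is a trivial cofibration in $\Dsset_\mathrm{CSS}$.'' But $\cJ$ is \emph{not} a set of generating trivial cofibrations for $\Dsset_\mathrm{Cat}$: the existence theorem used to build the model structure (\cref{thm:main}) only guarantees that $\cJ$ detects the fibrant objects and the fibrations \emph{between fibrant objects}, and while the resulting model structure is cofibrantly generated, no explicit generating set of trivial cofibrations is provided, and in general the class of trivial cofibrations is strictly larger than $\cJ\text{-}\mathrm{cof}$ (exactly as, for the Joyal model structure, inner horns together with $\Delta[0]\hookrightarrow J$ do not generate the trivial cofibrations). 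So checking the maps of $\cJ$ --- which you do correctly --- does not by itself show that the left adjoint preserves trivial cofibrations.

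The conclusion is nonetheless true, and there are two standard repairs. The paper's route avoids trivial cofibrations altogether: by \cite[Proposition E.2.14]{JoyalVolumeII}, an adjunction is a Quillen pair as soon as the left adjoint preserves cofibrations (clear from \cref{Icof} and \cref{thm:MSCSS}(i)) and the right adjoint preserves fibrations between fibrant objects, which follows from \cref{Jfibration} since a Reedy fibration between complete Segal spaces lifts against $F[0]\hookrightarrow\NI$. Alternatively, you could show directly that every weak equivalence of $\Dsset_\mathrm{Cat}$ is a weak equivalence of $\Dsset_\mathrm{CSS}$: replace $f$ by a $\cJ_0$-fibrant replacement as in \cref{lem:cJ0}; the replacement maps are trivial cofibrations in $\Dsset_\mathrm{Seg}$ (\cref{rem:J0fib}), hence weak equivalences in $\Dsset_\mathrm{CSS}$, and the replaced map is a Dwyer--Kan equivalence between Segal spaces, hence a CSS weak equivalence by \cref{thm:MSCSS}(iii); conclude by 2-out-of-3. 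Combined with preservation of cofibrations this gives preservation of trivial cofibrations. The second half of your argument is fine and is essentially the paper's: your reflection-of-weak-equivalences condition is a repackaging of the paper's derived counit check via \cref{thm:MS}(iii) and \cref{thm:MSCSS}(iii), and the derived unit step via \cref{lem:weinCSStoSegal} is identical.
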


\begin{proof}
    By \cref{Icof}, the functor $\id\colon \Dsset_{\mathrm{Cat}} \to \Dsset_{\mathrm{CSS}}$ preserves cofibrations. By \cref{Jfibration}, the functor $\id\colon \Dsset_{\mathrm{CSS}}\to \Dsset_{\mathrm{Cat}}$ preserves fibrations between fibrant objects. Hence the identity adjunction is a Quillen pair by \cite[Proposition E.2.14]{JoyalVolumeII}.  

    Next, we show that the derived unit is a weak equivalence. Let $X$ be a cofibrant object in $\Dsset_{\mathrm{Cat}}$ and consider a fibrant replacement $X\to \widehat{X}$ in $\Dsset_{\mathrm{CSS}}$. Then by \cref{lem:weinCSStoSegal} the component of the derived unit $X\to \widehat{X}$ is a weak equivalence in $\Dsset_{\mathrm{Cat}}$.

    Finally, we show that the derived counit is a weak equivalence. Let $X$ be a fibrant object in $\Dsset_{\mathrm{CSS}}$ and consider a cofibrant replacement $\overline{X}\to X$ in $\Dsset_{\mathrm{Cat}}$ given by an $\cI$-fibration. We want to prove that the component of the derived counit $\overline{X}\to X$ is a weak equivalence in $\Dsset_{\mathrm{CSS}}$. Since the complete Segal space $X$ is in particular a Segal space, then so is $\overline{X}$. Hence, by \cref{proofof4}, the map $\overline{X}\to X$ is a Dwyer--Kan equivalence between Segal spaces, and so it is a weak equivalence in $\Dsset_{\mathrm{CSS}}$ by \cref{thm:MSCSS} (iii). 
\end{proof}

\subsection{Quillen equivalences with Segal categories} \label{sec:QEPcat}

We now compare the categorical model structure with another model of $(\infty,1)$-categories, given by the \emph{Segal categories}. Let us first review the main features of the model structure for Segal categories, whose existence is established in \cite[Theorem 5.1]{Bergner}. The characterization of the fibrant objects follows from \cite[Corollary 5.13]{Bergner} and \cite[Theorem 3.2]{Bergnerfibrant}.

\begin{theorem}
    The category $\cP\cat(\sset)$ admits a cofibrantly generated model structure, in which
    \begin{rome}
        \item the cofibrations are the monomorphisms, 
        \item the fibrant objects are the Segal categories, i.e., Segal spaces with a discrete space of objects,
        \item the weak equivalences between Segal spaces are the Dwyer--Kan equivalences. 
    \end{rome}
\end{theorem}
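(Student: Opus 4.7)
The existence and characterization of this model structure are classically due to Pellissier and Bergner, so the most direct proof is to cite their work. If one wishes a self-contained argument within the framework of this paper, the cleanest approach is to obtain the model structure on $\cP\cat(\sset)$ by simultaneous left- and right-induction from $\Dsset_\mathrm{Cat}$ along the inclusion $\cP\cat(\sset)\hookrightarrow \Dsset$, as the paper itself pursues in \cref{sec:QEPcat}. This strategy reuses all the work already done in \cref{sec:existence} and yields the Quillen equivalence with $\Dsset_\mathrm{Cat}$ as an immediate bonus.

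An alternative is to apply \cref{thm:main} directly to $\cP\cat(\sset)$, following the same pattern as in \cref{sec:existence} but with generating sets adapted to the discreteness of level~$0$. The generating cofibrations $\cI'$ would be the boundary inclusions $\partial F[m]\hookrightarrow F[m]$ for $m\geq 0$, which suffice to generate all monomorphisms in $\cP\cat(\sset)$; the generating anodyne extensions $\cJ'$ would consist of the spine inclusions $\Sp[m]\hookrightarrow F[m]$ for $m\geq 2$, the inner horn inclusions $L^k[m]\hookrightarrow F[m]$ in the categorical direction, and the generator $F[0]\hookrightarrow \NI$; and $\cW_f'$ would be the class of Dwyer--Kan equivalences between Segal categories. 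Crucially, the mixed generators used in $\Dsset_\mathrm{Cat}$ that involve products with $\Delta[n]$ for $n\geq 1$ are unavailable here, since their targets have non-discrete level~$0$.

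Conditions (I), (II), and (IV) of \cref{thm:main} would then go through essentially verbatim from the arguments in \cref{sec:we,sec:fibwearetrivfib}. For the path object in condition (III), the naive construction $X^{\NI}$ does not lie in $\cP\cat(\sset)$, since $(X^{\NI})_0\cong \Map(\NI, X)$ need not be discrete; one can instead take $R(X^{\NI})$, where $R$ is the reflection from \cref{rem:reduction}, and verify using \cref{prop:path} that the resulting factorization still provides a Dwyer--Kan equivalence followed by an isofibration. The main obstacle is condition (V): every $\cI'$-fibration must admit a Dwyer--Kan equivalent $\cJ'$-fibrant replacement. The coskeleton-based argument of \cref{sec:trivfibarewe} produces a replacement whose level~$0$ is an arbitrary Kan complex, which is not permitted in $\cP\cat(\sset)$; one must either apply $R$ after each step or substitute $\pi_0(X_0)$ for the Kan-fibrant replacement of $X_0$ throughout, and then verify that the resulting discretised replacement remains a Segal space Dwyer--Kan equivalent to the original. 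Since $R$ is a right adjoint and need not preserve weak equivalences on the nose, this verification is the genuinely delicate step, and is essentially what the Quillen equivalence between $\cP\cat(\sset)$ and $\Dsset_\mathrm{Cat}$ will capture.
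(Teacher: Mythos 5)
Your primary route—simply citing Pellissier and Bergner—is exactly what the paper does: the theorem is recalled from Bergner's Theorem 5.1, with the characterization of the fibrant objects taken from Bergner's Corollary 5.13 and her earlier paper on fibrant objects, and no independent proof is given. The self-contained alternatives you sketch are not pursued in the paper (and note that the left/right-induction along $\cP\cat(\sset)\hookrightarrow \Dsset$ appears there only as a corollary of the Quillen equivalences, which presuppose this theorem, so it cannot serve as its proof without a separate argument), but they are not needed for this statement.
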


\begin{rem} \label{fibreplinpcat}
    By \cite[\textsection 5]{Bergner}, a fibrant replacement of an object $X\in \cP\cat(\sset)$ can be computed in $\Dsset$ using the weak factorization system generated by the maps of the form
    \[ \partial F[m]\times \Delta[n]\amalg_{\partial F[m]\times \Lambda^k[n]} F[m]\times \Lambda^k[n]\hookrightarrow F[m]\times \Delta[n], \]
    for $m\geq 1$ and $0\leq k\leq n$, and the maps of the form 
    \[ \Sp[m]\times \Delta[n]\amalg_{\Sp[m]\times \partial\Delta[n]} F[m]\times \partial\Delta [n]\hookrightarrow F[m]\times \Delta[n], \]
    for $m\geq 2$ and $n\geq 0$. Then a map $f\colon X\to Y$ in $\cP\cat(\sset)$ is defined to be a weak equivalence if its fibrant replacement is a Dwyer--Kan equivalence.
\end{rem}

The Segal category model structure is also a model of $(\infty,1)$-categories, as it is Quillen equivalent to the complete Segal space model structure by \cite[Theorem 6.3]{Bergner}. This Quillen equivalence is induced by the inclusion $I\colon \cP\cat(\sset)\hookrightarrow \Dsset$, which we recall admits both a left adjoint $L\colon \Dsset\to \cP\cat(\sset)$ and a right adjoint $R\colon \Dsset\to \cP\cat(\sset)$, see \cref{rem:reduction}. 

\begin{theorem} \label{IRQE}
    The adjunction $I\colon \cP\cat(\sset)\rightleftarrows \Dsset_{\mathrm{CSS}}\cocolon R$
    is a Quillen equivalence.
\end{theorem}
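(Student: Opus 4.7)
The plan is to reduce the statement to a Quillen equivalence between $\cP\cat(\sset)$ and the categorical model structure $\Dsset_{\mathrm{Cat}}$ from \cref{thm:MS}. The adjunction $I\colon \cP\cat(\sset) \rightleftarrows \Dsset_{\mathrm{CSS}} \cocolon R$ factors as the composite of $I\dashv R$ (between $\cP\cat(\sset)$ and $\Dsset_{\mathrm{Cat}}$) and $\id\dashv \id$ (between $\Dsset_{\mathrm{Cat}}$ and $\Dsset_{\mathrm{CSS}}$), the second being a Quillen equivalence by \cref{thm:idQE}. It therefore suffices to prove that $I\colon \cP\cat(\sset)\rightleftarrows\Dsset_{\mathrm{Cat}}\cocolon R$ is itself a Quillen equivalence.

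For the Quillen adjunction, any cofibration $f\colon X\hookrightarrow Y$ in $\cP\cat(\sset)$ is a monomorphism with $X_0,Y_0\in \set$; taking $S=Y_0\setminus X_0$ gives $X_0\amalg S\cong Y_0$, so $I(f)$ is an $\cI$-cofibration by \cref{Icof}. For the trivial cofibrations, the maps generating fibrant replacements in $\cP\cat(\sset)$ per \cref{fibreplinpcat} all lie in $\cJ_0\subseteq\cJ$ and induce identities at degree $0$; hence any fibrant replacement $X\to\widetilde X$ in $\cP\cat(\sset)$ is automatically a $\cJ$-cofibration when viewed in $\Dsset$, and therefore a trivial cofibration in $\Dsset_{\mathrm{Cat}}$. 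Combined with the coincidence of Dwyer--Kan equivalences as weak equivalences between Segal spaces in both model structures, a 2-out-of-3 argument with fibrant replacements yields that $I$ preserves all weak equivalences.

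For the Quillen equivalence itself, the key observation is that $RI(X)\cong X$ whenever $X_0\in\set$, since the defining pullback of $R$ collapses. So for $X\in\cP\cat(\sset)$, taking a fibrant replacement $X\to\widetilde X$ in $\cP\cat(\sset)$, the Segal space $I(\widetilde X)$ serves as a fibrant replacement of $I(X)$ in $\Dsset_{\mathrm{Cat}}$, and the derived unit becomes $X\to\widetilde X \cong RI(\widetilde X)$, a weak equivalence in $\cP\cat(\sset)$. For the derived counit at a Segal space $X\in\Dsset_{\mathrm{Cat}}$, the object $R(X)$ is already cofibrant in $\cP\cat(\sset)$ (as $R(X)_0=X_{0,0}\in\set$), so the counit is the inclusion $R(X)\hookrightarrow X$. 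Using the pullback definition of $R$, one has $\map_{R(X)}(x,y)\cong\map_X(x,y)$ for $x,y\in X_{0,0}$, giving homotopical full faithfulness by \cref{charDK}; essential surjectivity is immediate since $\ho R(X)$ and $\ho X$ share the object set $X_{0,0}$. Thus $R(X)\hookrightarrow X$ is a Dwyer--Kan equivalence.

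The main technical point is the compatibility of fibrant replacements: the explicit cell-complex construction of a fibrant replacement in $\cP\cat(\sset)$ must simultaneously stay inside $\cP\cat(\sset)$ (because the generating cells preserve discreteness at degree $0$) and be recognizable as a trivial cofibration of $\Dsset_{\mathrm{Cat}}$ (because those same cells are $\cJ$-cofibrations in $\Dsset$). This dual role of the generating maps is precisely what enables the factorization through $\Dsset_{\mathrm{Cat}}$ to transfer the Quillen equivalence across.
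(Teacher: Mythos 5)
Your proposal is correct, but it takes a genuinely different route from the paper: the paper does not prove \cref{IRQE} at all, it imports it from the literature (Bergner's comparison of Segal categories and complete Segal spaces, cited as [Bergner, Theorem 6.3]), and then uses it together with \cref{thm:idQE} and 2-out-of-6/2-out-of-3 for Quillen equivalences to \emph{deduce} \cref{LIQE}. You reverse this logic: you prove directly that $I\colon \cP\cat(\sset)\rightleftarrows\Dsset_\mathrm{Cat}\cocolon R$ is a Quillen equivalence (Quillen pair as in the paper's later propositions; derived unit via the $\cJ_0$-cell fibrant replacements of \cref{fibreplinpcat}, which are isomorphisms in level $0$ and hence stay in $\cP\cat(\sset)$ while being trivial cofibrations in $\Dsset_\mathrm{Cat}$, together with $RI\cong\id$; derived counit via the identification $\map_{RX}(x,y)\cong\map_X(x,y)$ and \cref{charDK}, exactly as in the paper's proposition on the counit $\varepsilon$), and then compose with \cref{thm:idQE}. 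This is not circular, since \cref{thm:idQE} and the Quillen-pair arguments never invoke \cref{IRQE}; what your route buys is a self-contained proof of Bergner's theorem from the categorical model structure, at the price of having to verify the derived unit/counit by hand rather than getting \cref{LIQE} for free by 2-out-of-3. The only step you leave implicit is that $RX$ is again a Segal space (Reedy fibrancy of the pullback defining $RX$ and the Segal maps over the discrete object space), which is needed so that ``Dwyer--Kan equivalence'' really certifies $\varepsilon_X\colon IRX\to X$ as a weak equivalence between \emph{fibrant} objects of $\Dsset_\mathrm{Cat}$; this is true (the relative matching maps of $RX\to X$ are isomorphisms in positive levels and $(RX)_0$ is discrete), and the paper itself asserts it without proof in its counit proposition, so it is a minor omission rather than a gap.
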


\begin{rem}
    The adjunction $L\dashv I$ is however \emph{not} a Quillen equivalence between $\Dsset_\mathrm{CSS}$ and $\cP\cat(\sset)$, as $L$ does not preserves monomorphisms, and so is not left Quillen.
\end{rem}

In the categorical model structure that we constructed in \cref{thm:MS}, we removed all the problematic monomorphisms whose image under $L$ is not a monomorphism. As a consequence, the adjunction $L\dashv I$ does become a Quillen equivalence between $\Dsset_\mathrm{Cat}$ and $\cP\cat(\sset)$. We prove the following.

\begin{theorem} \label{LIQE}
    The adjunctions 
    \[ L\colon\Dsset_{\mathrm{Cat}} \rightleftarrows \cP\cat(\sset)\cocolon I \quad \text{and} \quad I\colon\cP\cat(\sset) \rightleftarrows \Dsset_{\mathrm{Cat}}\cocolon R \]
    are Quillen equivalences.
\end{theorem}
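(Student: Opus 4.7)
The plan is to prove the theorem via the intermediate claim highlighted in the introduction: the Segal category model structure on $\cP\cat(\sset)$ is both left- and right-induced from $\Dsset_{\mathrm{Cat}}$ along the inclusion $I$. This exhibits $I$ simultaneously as a left Quillen functor (giving the Quillen adjunction $I \dashv R$) and as a right Quillen functor (giving $L \dashv I$). Both Quillen equivalence statements will then follow by two-out-of-three for Quillen equivalences, combined with \cref{thm:idQE,IRQE}.

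First I would handle the adjunction $I \dashv R$. To show $I\colon \cP\cat(\sset) \to \Dsset_{\mathrm{Cat}}$ is left Quillen: it preserves cofibrations, since a monomorphism $f\colon X\hookrightarrow Y$ in $\cP\cat(\sset)$ has $X_0, Y_0 \in \set$, so $R = Y_0 \setminus X_0$ witnesses the cofibration condition of \cref{thm:MS}; it preserves weak equivalences, since $I \dashv R\colon \cP\cat(\sset) \to \Dsset_{\mathrm{CSS}}$ is a Quillen equivalence by \cref{IRQE} with every object cofibrant, and weak equivalences in $\Dsset_{\mathrm{CSS}}$ are weak equivalences in $\Dsset_{\mathrm{Cat}}$ by \cref{lem:weinCSStoSegal}. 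Then two-out-of-three for Quillen equivalences applied to the factorization
\[
\cP\cat(\sset) \xrightarrow{I} \Dsset_{\mathrm{Cat}} \xrightarrow{\mathrm{id}} \Dsset_{\mathrm{CSS}},
\]
whose composite is a Quillen equivalence by \cref{IRQE} and whose second factor is one by \cref{thm:idQE}, gives that $I\colon \cP\cat(\sset) \to \Dsset_{\mathrm{Cat}}$ is a left Quillen equivalence, settling this adjunction.

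For the adjunction $L \dashv I$, I would verify that $L$ is left Quillen by showing it sends the generating sets $\cI$ and $\cJ$ into cofibrations and trivial cofibrations of $\cP\cat(\sset)$, respectively, so that saturation gives the result. For $\cI$: the horn inclusions $\Lambda^k[n] \hookrightarrow \Delta[n]$ map to identities of $F[0]$, the boundary inclusions $\partial F[m] \hookrightarrow F[m]$ (type~(i) with $n=0$) already lie in $\cP\cat(\sset)$, and the remaining mixed generators of type~(i) are handled by a direct computation with the reflector. For $\cJ$: the maps in $\cJ_0$ are precisely those used in \cref{fibreplinpcat} to compute fibrant replacements in $\cP\cat(\sset)$, so their $L$-images are trivial cofibrations; and $L(F[0]\hookrightarrow \NI) = F[0]\hookrightarrow \NI$, a monomorphism and Dwyer--Kan equivalence between Segal categories. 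Once $L$ is left Quillen, the Quillen equivalence follows: the derived counit is an isomorphism by fully faithfulness of $I$ (so $LI = \mathrm{id}$), and for the derived unit at cofibrant $X \in \Dsset_{\mathrm{Cat}}$, it suffices to show the unit $\eta_X\colon X \to LX$ is a weak equivalence in $\Dsset_{\mathrm{Cat}}$, since $L$ being left Quillen will then extend this to a weak equivalence $X \to I\widetilde{LX}$ after fibrant replacement in $\cP\cat(\sset)$.

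The main obstacle I expect is the concrete verification that $L$ preserves the mixed generators in $\cI$ and the analysis of the unit $X \to LX$ at cofibrant $X$, both of which require a handle on the reflector $L$ beyond its level-$0$ behavior; the cofibration hypothesis, guaranteeing $X_0$ is already weakly equivalent to $\pi_0(X_0) = (LX)_0$, is the key input for the latter, and propagating this equivalence into higher simplicial degree is where the argument is most delicate.
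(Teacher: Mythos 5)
Your handling of the adjunction $I\dashv R$ is essentially correct and amounts to a mirror image of the paper's argument: the paper also concludes by $2$-out-of-$3$ against \cref{thm:idQE} and \cref{IRQE}, but it works on the right-adjoint side, where the triangle only commutes up to the counit $IRX\to X$ and one must first check that this counit is a Dwyer--Kan equivalence for every complete Segal space $X$; your left-adjoint triangle commutes strictly (the composite of $I\colon \cP\cat(\sset)\to\Dsset_\mathrm{Cat}$ with $\id\colon\Dsset_\mathrm{Cat}\to\Dsset_\mathrm{CSS}$ is literally the left Quillen functor of \cref{IRQE}), which is a perfectly valid and arguably cleaner route. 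Two small remarks: the ``left- and right-induced'' statement you announce as your intermediate claim is in the paper a \emph{corollary} of the theorem rather than an input (you never actually use it, so this is only a framing issue), and your treatment of the mixed generators of $\cI$ under $L$ is a placeholder --- the paper disposes of this by quoting Bergner's description of the generating cofibrations of $\cP\cat(\sset)$ as exactly the $L$-images of those maps.

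The genuine gap is in the Quillen-equivalence part of $L\dashv I$. You propose to verify the derived unit and counit directly, and you explicitly leave open the key point, namely that the unit $X\to ILX$ is a weak equivalence in $\Dsset_\mathrm{Cat}$ for cofibrant $X$; this is precisely the step the paper's proof is engineered to avoid (the paper never analyzes the unit of $L\dashv I$, deducing instead that $I$ is a right Quillen equivalence by $2$-out-of-$3$ from $R$ and $\id$). The gap can be closed in two ways. Cheapest: you have already shown that $I$ preserves \emph{all} weak equivalences and that $I\dashv R$ into $\Dsset_\mathrm{Cat}$ is a Quillen equivalence; hence the functor induced by $I$ on homotopy categories is an equivalence, and this same functor is the total right derived functor of $I$ in $L\dashv I$, so $L\dashv I$ is a Quillen equivalence with no further computation. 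Alternatively, the direct route does go through: $LX$ is the pushout of $X\leftarrow \mathrm{const}(X_0)\to \mathrm{const}(\pi_0 X_0)$, so $(LX)_m\cong X_m\amalg_{X_0}\pi_0(X_0)$, where $X_0\to X_m$ is the (split monic) total degeneracy; if $X$ is cofibrant then $X_0\to\pi_0(X_0)$ is a weak equivalence, so by left properness of $\sset$ the unit $X\to ILX$ is a levelwise weak equivalence, hence a weak equivalence in $\Dsset_\mathrm{CSS}$ and therefore in $\Dsset_\mathrm{Cat}$ by \cref{lem:weinCSStoSegal}. As written, however, the proposal defers exactly the point at issue, so it is incomplete rather than wrong.
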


To prove this result, we first show that the adjunctions $L\dashv I$ and $I\dashv R$ are Quillen pairs.

\begin{prop}
    The adjunction $L\colon \Dsset_{\mathrm{Cat}}\rightleftarrows \cP\cat(\sset)\cocolon I$ is a Quillen pair. 
\end{prop}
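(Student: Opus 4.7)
The plan is to apply the criterion of \cite[Proposition E.2.14]{JoyalVolumeII} used earlier in the paper, reducing to showing that $L$ preserves cofibrations and that $I$ preserves fibrations between fibrant objects. I would start by unpacking $L$ explicitly: its universal property forces $(LX)_0 = \pi_0 X_0$, and one verifies that, at each categorical level $m\geq 0$, the simplicial set $(LX)_m$ is computed as the pushout in $\sset$
\[ (LX)_m = X_m \amalg_{X_0} \pi_0 X_0, \]
where $X_0 \to X_m$ is the iterated degeneracy and $X_0 \to \pi_0 X_0$ is the canonical projection. Equivalently, $(LX)_m$ is the quotient of $X_m$ that identifies the constant $m$-simplices $s_0^m x$ and $s_0^m x'$ whenever $x, x' \in X_{0,0}$ lie in the same path component of $X_0$.

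To show $L$ preserves cofibrations, let $f\colon X\hookrightarrow Y$ be a cofibration in $\Dsset_{\mathrm{Cat}}$ with presenting set $R$ and weak equivalence $X_0\amalg R\xrightarrow{\simeq} Y_0$ restricting to $f_0$. Applying $\pi_0$ gives $\pi_0 Y_0 \cong \pi_0 X_0 \amalg R$, so $\pi_0 f_0$ is the inclusion of a summand, hence injective. I would verify by a direct case analysis that each $Lf_m\colon (LX)_m \to (LY)_m$ is injective: given $a, b \in X_m$ with $[f_m(a)] = [f_m(b)]$ in $(LY)_m$, either $f_m(a) = f_m(b)$ and injectivity of $f_m$ gives $a = b$; or else $f_m(a) = s_0^m y$ and $f_m(b) = s_0^m y'$ with $y, y'\in Y_0$ in the same path component, in which case injectivity of $f_m$ forces $a, b$ themselves to be constant simplices $s_0^m x, s_0^m x'$ over some $x, x' \in X_0$ with $f_0(x) = y, f_0(x') = y'$, and injectivity of $\pi_0 f_0$ then forces $x, x'$ into the same path component of $X_0$, giving $[a] = [b]$. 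Thus $Lf$ is a monomorphism, hence a cofibration in $\cP\cat(\sset)$.

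For the claim that $I$ preserves fibrations between fibrant objects, a fibrant object of $\cP\cat(\sset)$ is a Segal space with discrete space of objects, which is in particular a Segal space and so fibrant in $\Dsset_{\mathrm{Cat}}$. By \cite{Bergner}, a fibration between such objects in $\cP\cat(\sset)$ is a Reedy fibration whose induced functor on homotopy categories is an isofibration of categories, i.e., exactly an isofibration in the sense of \cref{sec:isofib}. Since $I$ is fully faithful, its image is again an isofibration between Segal spaces in $\Dsset$, which is a fibration between fibrant objects in $\Dsset_{\mathrm{Cat}}$ by \cref{Jfibration}. The main technical obstacle is the injectivity verification of the second paragraph: the $R$-set condition on cofibrations in $\Dsset_{\mathrm{Cat}}$ is exactly strong enough to control $\pi_0 f_0$ and prevent distinct components of $X_0$ from being collapsed together by $L$.
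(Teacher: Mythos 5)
Your first half is essentially correct, though it takes a more hands-on route than the paper. You compute $L$ explicitly as $LX=X\amalg_{cX_0}c\,\pi_0X_0$ (so $(LX)_m=X_m\amalg_{X_0}\pi_0X_0$) and check directly that $Lf$ is a monomorphism using injectivity of $\pi_0 f_0$; two small points to tighten are that the identifications $s_0^mz\sim s_0^mz'$ occur for simplices $z,z'$ of $X_0$ in every simplicial degree (not only vertices), and that the step ``$f_m(a)$ totally degenerate forces $a$ totally degenerate'' needs the retraction $X_m\to X_0$ given by a vertex map (write $a=s_0^m(d a)$ and use injectivity). The paper instead gets this half for free from Bergner's description of the generating cofibrations of $\cP\cat(\sset)$ as the $L$-images of the maps in $\cI$ of type (i), together with $L(\Lambda^k[n]\hookrightarrow\Delta[n])=\id_{F[0]}$; your argument is a reasonable substitute, as by \cref{Icof} it covers all cofibrations at once.

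The second half has a genuine gap. You cite ``by Bergner, a fibration between fibrant objects of $\cP\cat(\sset)$ is a Reedy fibration inducing an isofibration on homotopy categories.'' That characterization is not an available input: in this paper it is exactly the corollary extracted after \cref{cor:rightind}, which rests on \cref{LIQE} and hence on the very proposition you are proving, so as written the argument is circular in this framework (Bergner defines fibrations in the Segal category model structure by a lifting property and does not identify the fibrations between fibrant objects; note also that Reedy fibrancy cannot be tested by lifting inside $\cP\cat(\sset)$, since the relevant pushout-product domains for $m\geq 1$ have non-discrete zeroth spaces). The way the paper closes this half is via the adjunction itself: by \cite[Proposition E.2.14]{JoyalVolumeII} it suffices, after the cofibration step, to show that $L$ sends the generating anodyne maps $\cJ$ of \cref{notn:J} to weak equivalences of $\cP\cat(\sset)$. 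For the maps in $\cJ_0$ this follows from the fibrant replacement description of \cref{fibreplinpcat}, which applied to $LX\hookrightarrow LY$ can be taken to be the identity; and $L(F[0]\hookrightarrow\NI)=F[0]\hookrightarrow\NI$ is a Dwyer--Kan equivalence between Segal spaces, hence a weak equivalence in $\cP\cat(\sset)$. By adjunction this gives that $I$ preserves fibrations between fibrant objects (using \cref{Jfibration}) without needing to know what the fibrations of $\cP\cat(\sset)$ are; replacing your citation by this verification repairs the proof.
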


\begin{proof}
    Recall from \cite[\textsection 4]{Bergner} that a set of generating cofibrations for the model structure on $\cP\cat(\sset)$ is given by the set containing the maps
    \[ L\left(\partial F[m]\times \Delta[n]\amalg_{\partial F[m]\times \partial\Delta[n]} F[m]\times \partial\Delta [n]\hookrightarrow F[m]\times \Delta[n]\right), \]
    for $m=n=0$ and for $m\geq 1$ and $n\geq 0$. Further using that $L(\Lambda^k[n]\hookrightarrow \Delta[n])$ is the identity at $F[0]$, we see that the functor $L\colon \Dsset_\mathrm{Cat}\to \cP\cat(\sset)$ preserves cofibrations. 
    
    By \cite[Proposition E.2.14]{JoyalVolumeII}, it remains to show that $L\colon \Dsset_\mathrm{Cat}\to \cP\cat(\sset)$ sends maps in $\cJ$ to weak equivalences in $\cP\cat(\sset)$.
    First, consider a map $X\hookrightarrow Y$ in $\cJ_0$, i.e., a map of the form (i) or (ii) in \cref{notn:J}. Using \cref{fibreplinpcat}, we see that the fibrant replacement of the induced map $LX\hookrightarrow LY$ can be computed to be the identity, and so the map $LX\hookrightarrow LY$ is a weak equivalence in $\cP\cat(\sset)$. Finally, if $X\hookrightarrow Y$ is the map $F[0]\hookrightarrow \NI$, then it is a Dwyer--Kan equivalence between Segal spaces and so it is also a weak equivalence in~$\cP\cat(\sset)$. 
\end{proof}

\begin{prop}
    The adjunction $I\colon\cP\cat(\sset) \rightleftarrows \Dsset_{\mathrm{Cat}}\cocolon R$ is a Quillen pair. 
\end{prop}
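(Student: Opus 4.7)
The plan is to invoke \cite[Proposition E.2.14]{JoyalVolumeII} by verifying that the left adjoint $I$ preserves cofibrations and sends weak equivalences in $\cP\cat(\sset)$ to weak equivalences in $\Dsset_\mathrm{Cat}$; the latter automatically implies preservation of trivial cofibrations. One could alternatively try to verify that $R$ preserves fibrations between fibrant objects, but the direct route is more economical since it lets us exploit the explicit description of fibrant replacements in $\cP\cat(\sset)$ recorded in \cref{fibreplinpcat}.

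For cofibrations, the argument is immediate: if $f\colon X\hookrightarrow Y$ is a monomorphism in $\cP\cat(\sset)$, then the induced map $f_0\colon X_0\hookrightarrow Y_0$ is an injection between sets, so taking $R\coloneqq Y_0\setminus X_0$ yields an isomorphism $X_0\amalg R \cong Y_0$ restricting to $f_0$. By \cref{thm:MS}~(i), $If$ is a cofibration in $\Dsset_\mathrm{Cat}$. For weak equivalences, the key observation is that each map generating the weak factorization system used to compute fibrant replacements in $\cP\cat(\sset)$, as listed in \cref{fibreplinpcat}, also lies in the set $\cJ_0\subseteq\cJ$ from \cref{notn:J}. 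As a consequence, any fibrant replacement $X\to\widetilde X$ constructed by applying the small object argument with these cells in $\Dsset$ is in particular a $\cJ_0$-cofibration, and therefore a trivial cofibration in $\Dsset_\mathrm{Cat}$.

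To conclude, given a weak equivalence $f\colon X\to Y$ in $\cP\cat(\sset)$, I would construct a commutative square whose vertical maps $r_X\colon X\to \widetilde X$ and $r_Y\colon Y\to\widetilde Y$ are fibrant replacements computed using these cells, and whose bottom map $\widetilde f\colon \widetilde X\to \widetilde Y$ is the induced map between Segal categories. By the definition of weak equivalences in $\cP\cat(\sset)$ recalled in \cref{fibreplinpcat}, the map $\widetilde f$ is a Dwyer--Kan equivalence between Segal spaces, hence a weak equivalence in $\Dsset_\mathrm{Cat}$ by \cref{thm:MS}~(iii); both verticals are weak equivalences in $\Dsset_\mathrm{Cat}$ by the preceding paragraph. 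Two applications of the 2-out-of-3 property---first to the triangle $X\to \widetilde X\to \widetilde Y$ to deduce that the composite is a weak equivalence, then to the triangle $X\to Y\to\widetilde Y$ to deduce that $f$ is one---force $f$ to be a weak equivalence in $\Dsset_\mathrm{Cat}$, as desired. The main obstacle is the comparison of index ranges needed to verify that the cells from \cref{fibreplinpcat} are contained in $\cJ_0$; once this bookkeeping is settled, the remainder is a routine 2-out-of-3 argument.
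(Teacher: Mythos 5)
Your proposal is correct and follows essentially the same route as the paper: cofibrations are preserved because $X_0$ and $Y_0$ are sets, and weak equivalences are preserved because the cells of \cref{fibreplinpcat} lie in $\cJ_0\subseteq\cJ$, so $I$ preserves fibrant replacements and one reduces to Dwyer--Kan equivalences between Segal spaces, which are weak equivalences in both model structures. Your explicit square and double application of 2-out-of-3 merely spell out what the paper compresses into ``$I$ preserves fibrant replacements, hence it suffices to check weak equivalences between fibrant objects.''
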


\begin{proof}
    Suppose that $X\hookrightarrow Y$ is a monomorphism in $\cP\cat(\sset)$. Since $I$ is a right adjoint and $X_0$ and $Y_0$ are sets, the monomorphism $IX\hookrightarrow IY$ satisfies condition (2) of \cref{Icof}, hence it is a cofibration. This shows that $I\colon \cP\cat(\sset)\to \Dsset_{\mathrm{Cat}}$ preserves cofibrations. 

    We now prove that $I\colon \cP\cat(\sset)\to \Dsset_{\mathrm{Cat}}$ preserves weak equivalences. By \cref{fibreplinpcat}, we see that $I$ preserves fibrant replacements. Hence, it suffices to show that $I$ preserves weak equivalences between fibrant objects, but this is clear as in both model structures these are given by the Dwyer--Kan equivalences between Segal spaces. 
\end{proof}

To prove that the above are further Quillen equivalences, we show that the following diagram of right Quillen functors induces a commutative diagram at the level of homotopy categories. We then deduce from \cref{thm:idQE,IRQE} and the $2$-out-of-$3$ property for Quillen equivalences that $I$ is a Quillen equivalence, thereby concluding the proof of \cref{LIQE}. 

\begin{prop}
    The counit of the adjunction $I\dashv R$ induces a diagram of right Quillen functors which commutes at fibrant objects up to a weak equivalence in $\Dsset_\mathrm{Cat}$.
    \begin{tz}
    \node[](1) {$\Dsset_\mathrm{CSS}$}; 
    \node[above of=1,xshift=2.5cm,yshift=-.3cm](2) {$\cP\cat(\sset)$}; 
    \node[below of=2,xshift=2.5cm,yshift=.3cm](3) {$\Dsset_\mathrm{Cat}$}; 

    \draw[right hook->](2) to node[above,la,xshift=3pt]{$I$} (3);
    \draw[->](1) to node(a)[below,la]{$\id$} (3);
    \draw[->](1) to node[above,la,xshift=-3pt]{$R$} (2); 

    \node at ($(2)!0.5!(a)$) {\rotatebox{270}{$\Rightarrow$}};
    \node[la] at ($(2)!0.5!(a)-(7pt,0)$) {$\varepsilon$};
\end{tz}
\end{prop}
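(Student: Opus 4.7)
The plan is to show that for every Segal space $X$, the component $\varepsilon_X\colon IRX \to X$ of the counit is a Dwyer--Kan equivalence between Segal spaces, and hence a weak equivalence in $\Dsset_{\mathrm{Cat}}$ by \cref{thm:MS}~(iii). Since the fibrant objects of $\Dsset_{\mathrm{CSS}}$ are complete Segal spaces, which are in particular Segal spaces, this establishes the required 2-commutativity at fibrant objects. Recall from \cref{rem:reduction} that $RX = X\times_{\cosk_0(X_0)} \cosk_0(X_{0,0})$, so $\varepsilon_X$ is the canonical projection from this pullback.

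The first step is to verify that $RX$ is itself a Segal space whenever $X$ is. Since $\Map(\partial F[m], -)$ preserves pullbacks and $\cosk_0$ is right adjoint to $(-)_0$, the matching object of $RX$ at level $m$ is $\Map(\partial F[m], X)\times_{X_0^{m+1}} X_{0,0}^{m+1}$. A pullback computation then identifies the relative matching map of $RX$ as a base change of the relative matching map of $X$, hence a fibration since $X$ is Reedy fibrant. A similar identification shows that the Segal map of $RX$ at level $m$ is a base change of the Segal map of $X$, which is a trivial fibration (being a weak equivalence since $X$ is a Segal space, and a fibration because $\Sp[m]\hookrightarrow F[m]$ is a cofibration and $X$ is Reedy fibrant); its base change is thus a weak equivalence.

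The second step is to check that $\varepsilon_X$ is a Dwyer--Kan equivalence. Since $(RX)_{0, 0} = X_{0, 0}$ and $\varepsilon_X$ is the identity on vertices at bisimplicial level $(0, 0)$, the induced functor $\ho(RX)\to \ho(X)$ is a bijection on objects, in particular essentially surjective. For homotopical full faithfulness, $RX_0 = X_{0,0}$ is a discrete simplicial set, so the homotopy fiber $\map_{RX}(x, y)$ agrees with the strict fiber of the fibration $RX_1\to X_{0,0}^{\times 2}$ at $(x, y)$, which by construction coincides with the strict fiber of $X_1\to X_0^{\times 2}$ at $(x, y)$, namely $\map_X(x, y)$.

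The main technical step is the verification that $RX$ is a Segal space; once the matching objects are identified by pullback, the remainder reduces to stability of (trivial) fibrations in $\sset$ under pullback. Notably, the argument never uses completeness of $X$, showing that $\varepsilon_X$ is a weak equivalence in $\Dsset_{\mathrm{Cat}}$ whenever $X$ is a Segal space.
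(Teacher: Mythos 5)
Your proof is correct and follows essentially the same route as the paper: the counit $\varepsilon_X\colon IRX\to X$ is the projection from the pullback defining $RX$, it is bijective on objects and homotopically fully faithful, hence a Dwyer--Kan equivalence between Segal spaces and so a weak equivalence in $\Dsset_\mathrm{Cat}$. The paper gets the fully faithfulness by citing \cref{charDK} and simply asserts that $IRX$ is a Segal space, whereas you verify both by hand via base change of matching and Segal maps; this is a harmless expansion of the same argument (the only point you leave implicit is the paper's remark that all objects of $\cP\cat(\sset)$ are cofibrant, so the counit computes the derived counit, which is what makes the statement usable at the level of homotopy categories).
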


\begin{proof}
    Let $X$ be a fibrant object in $\Dsset_\mathrm{CSS}$, i.e., a complete Segal space. Since all objects in $\cP\cat(\sset)$ are cofibrant, the component of the counit $\varepsilon_X\colon IRX\to X$ coincides with the component of the derived counit at $X$. As $IRX$ and $X$ are both Segal spaces and $\varepsilon_X$ induces a bijection $(IRX)_{0, 0}\cong X_{0, 0}$, it follows directly from \cref{rem:reduction,charDK} that $\varepsilon_X\colon IRX\to X$ is a Dwyer--Kan equivalence, and so a weak equivalence in~$\Dsset_\mathrm{Cat}$. 
\end{proof}

As a consequence, we get the following result, from which we directly extract a characterization of the fibrations between fibrant objects in the Segal category model structure. 

\begin{cor} \label{cor:rightind}
    The model structure $\cP\cat(\sset)$ is left- and right-induced from the model structure $\Dsset_\mathrm{Cat}$ along the inclusion $I\colon \cP\cat(\sset)\to \Dsset$.
\end{cor}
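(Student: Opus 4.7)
\textbf{Proof plan for \cref{cor:rightind}.} My strategy is to verify directly that for each of the three classes of maps in the Segal category model structure on $\cP\cat(\sset)$, a map $f$ belongs to the class if and only if $If$ belongs to the corresponding class in $\Dsset_\mathrm{Cat}$. Since cofibrations and fibrations in a model structure are determined by the other two classes via lifting, establishing preservation and reflection for all three classes yields both left-induction and right-induction in one stroke.

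For cofibrations, the Segal category model structure has monomorphisms as its cofibrations, while $\Dsset_\mathrm{Cat}$ by \cref{Icof} requires in addition that $f_0$ extends to a weak equivalence $X_0 \amalg R \xrightarrow{\simeq} Y_0$. Since every object of $\cP\cat(\sset)$ has a set as its space of objects, any injection $f_0 \colon X_0 \hookrightarrow Y_0$ trivially satisfies this condition by taking $R = Y_0 \setminus f_0(X_0)$. Full faithfulness of $I$ identifies monomorphisms in $\cP\cat(\sset)$ with monomorphisms in $\Dsset$, which finishes this case.

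For weak equivalences, preservation is immediate from the two Quillen pairs $L \dashv I$ and $I \dashv R$ established earlier: the functor $I$ is simultaneously left and right Quillen, hence preserves trivial cofibrations and trivial fibrations, and therefore all weak equivalences by factorization. For reflection, given $f \colon X \to Y$ in $\cP\cat(\sset)$ with $If$ a weak equivalence in $\Dsset_\mathrm{Cat}$, I would apply the small object argument with the generators of \cref{fibreplinpcat}, which are exactly the elements of $\cJ_0$ from \cref{notn:J}, to produce fibrant replacements $X \to \widetilde X$ and $Y \to \widetilde Y$ in $\cP\cat(\sset)$. These are $\cJ_0$-cofibrations, hence trivial cofibrations in $\Dsset_\mathrm{Cat}$ (they are monomorphisms satisfying the condition of \cref{Icof} by \cref{J0coflevel01} with $R = \emptyset$, and they are weak equivalences in $\Dsset_\mathrm{Cat}$ as $\cJ \subseteq \cJ$-cof are trivial cofibrations in $\Dsset_\mathrm{CSS}$ by the proof of \cref{thm:idQE}, hence in $\Dsset_\mathrm{Cat}$ by \cref{lem:weinCSStoSegal}). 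By 2-out-of-3, $I\widetilde f$ is a weak equivalence in $\Dsset_\mathrm{Cat}$ between Segal spaces, hence a Dwyer--Kan equivalence; since the same class characterizes weak equivalences between Segal spaces in $\cP\cat(\sset)$, we deduce that $\widetilde f$, and then $f$, is a weak equivalence in $\cP\cat(\sset)$.

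For fibrations, $I$ being right Quillen for $L \dashv I$ gives preservation, and the reflection step is the main obstacle. Given $f$ in $\cP\cat(\sset)$ with $If$ a fibration in $\Dsset_\mathrm{Cat}$, I would check directly that $f$ has the right lifting property against every trivial cofibration $g\colon A \hookrightarrow B$ in $\cP\cat(\sset)$. Because $I$ is left Quillen for $I \dashv R$, $Ig$ is a trivial cofibration in $\Dsset_\mathrm{Cat}$, so a lift $IB \to IX$ exists in $\Dsset$, and full faithfulness of $I$ transports it back to the desired lift $B \to X$ in $\cP\cat(\sset)$. Combined, the three verifications show that the Segal category model structure is both left- and right-induced from $\Dsset_\mathrm{Cat}$ along $I$.
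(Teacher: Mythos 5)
Your argument is correct, and at bottom it has the same skeleton as the paper's proof: show that $I$ preserves and reflects cofibrations, fibrations, and weak equivalences. The difference lies in how this is justified. The paper isolates an abstract lemma — any fully faithful functor that is simultaneously a left and a right Quillen equivalence left- and right-induces the model structure on its source — and feeds into it the Quillen equivalences of \cref{LIQE}; in particular, reflection of weak equivalences there comes from $I$ preserving fibrant replacements together with the derived counit of $L\dashv I$ being a weak equivalence, and reflection of cofibrations is handled dually to fibrations. You instead verify creation of the three classes concretely: cofibrations via \cref{Icof} and discreteness of level $0$, weak equivalences via the explicit fibrant replacements of \cref{fibreplinpcat} and the fact that both model structures have the Dwyer--Kan equivalences as weak equivalences between Segal spaces, and fibrations by the same full-faithfulness lifting argument the paper uses. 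A payoff of your route is that it only needs the two Quillen \emph{pairs} plus these explicit characterizations, not the Quillen equivalence statement itself, so it is somewhat more self-contained; the paper's abstract lemma buys a reusable general criterion. Two small points to tidy: the generating set in \cref{fibreplinpcat} is a proper subset of $\cJ_0$ from \cref{notn:J} (it omits the $m=0$ case $\Lambda^k[n]\hookrightarrow \Delta[n]$), which is all you need but is not ``exactly'' $\cJ_0$; and you should construct the replacements compatibly (e.g.\ factor $j_Y\circ f$ to obtain $\widetilde{X}$ and $\widetilde{f}$) so that the comparison map exists, and justify that $I$ preserves monomorphisms by its being a right adjoint (or by the levelwise description) — full faithfulness only gives reflection.
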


\begin{cor}
    The fibrations between Segal spaces in $\cP\cat(\sset)$ are the isofibrations. 
\end{cor}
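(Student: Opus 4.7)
The corollary follows directly once one knows that $\cP\cat(\sset)$ is right-induced from $\Dsset_\mathrm{Cat}$, so my plan is essentially a two-line unpacking of definitions.

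First I would invoke \cref{cor:rightind}: since the Segal category model structure on $\cP\cat(\sset)$ is right-induced from $\Dsset_\mathrm{Cat}$ along the fully faithful inclusion $I\colon\cP\cat(\sset)\hookrightarrow \Dsset$, a map $f\colon X\to Y$ in $\cP\cat(\sset)$ is a fibration if and only if $If\colon IX\to IY$ is a fibration in $\Dsset_\mathrm{Cat}$.

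Next I would specialize to the case where $X,Y$ are Segal spaces in $\cP\cat(\sset)$. Then $IX$ and $IY$ are Segal spaces in $\Dsset$, so by part (iv) of \cref{thm:MS} the map $If$ is a fibration in $\Dsset_\mathrm{Cat}$ precisely when it is an isofibration. Since the notion of isofibration between Segal spaces depends only on the underlying simplicial space (it asks that $If$ be a Reedy fibration and that the induced functor on homotopy categories be an isofibration, both of which are properties preserved and reflected by the fully faithful inclusion $I$), this is equivalent to $f$ being an isofibration in $\cP\cat(\sset)$.

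There is no real obstacle here: all the work was done in establishing \cref{cor:rightind} and in the characterization (iv) of \cref{thm:MS}. The only thing to be mildly careful about is to note that ``isofibration'' is intrinsic to the map of simplicial spaces, so the characterization transfers across $I$ without any extra argument.
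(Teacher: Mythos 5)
Your proof is correct and matches the paper's intended argument: the corollary is extracted directly from \cref{cor:rightind} (fibrations in $\cP\cat(\sset)$ are created by the inclusion $I$) together with the characterization of fibrations between Segal spaces in $\Dsset_\mathrm{Cat}$ as isofibrations from \cref{thm:MS}, using that being an isofibration is a property of the underlying map of simplicial spaces.
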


We recover \cref{cor:rightind} as an application of the following abstract argument, by taking $\cM = \cP\cat(\sset)$, $\cN = \Dsset_\mathrm{Cat}$, and $F = I$.

\begin{lemma}
    Let $\cM$ and $\cN$ be model categories and $F\colon \cM\to \cN$ be a fully faithful functor that is both a left and a right Quillen equivalence. Then the model structure on~$\cM$ is both left- and right-induced along $F$ from that on $\cN$. 
\end{lemma}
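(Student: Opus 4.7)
The plan is to prove that $F$ both preserves and reflects each of the three distinguished classes (cofibrations, fibrations, and weak equivalences). Once this is established, it says exactly that the model structure on $\cM$ is simultaneously left-induced (along cofibrations and weak equivalences) and right-induced (along fibrations and weak equivalences) from the one on $\cN$. Preservation is immediate from the hypotheses: since $F$ is both left and right Quillen, it preserves cofibrations, trivial cofibrations, fibrations, and trivial fibrations, and hence all weak equivalences.

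For the reflection statements, the key structural observation is that full faithfulness of $F$, together with the existence of a left adjoint $L\colon \cN\to \cM$ (because $F$ is right Quillen) and a right adjoint $R\colon \cN\to \cM$ (because $F$ is left Quillen), forces both the unit $\eta\colon \id_\cM\to RF$ and the counit $\varepsilon\colon LF\to \id_\cM$ to be natural isomorphisms. Given this, if $Ff$ is a fibration in $\cN$, then since $R$ is right Quillen, $RFf$ is a fibration in $\cM$; and $RFf\cong f$ via $\eta$, so $f$ is a fibration. Dually, if $Ff$ is a cofibration, applying the left Quillen functor $L$ and using $\varepsilon\colon LF\cong \id_\cM$ shows that $f$ is a cofibration.

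It then remains to reflect weak equivalences. I would factor $f = pi$ in $\cM$ with $i$ a cofibration and $p$ a trivial fibration. Applying $F$ gives $Ff = Fp\circ Fi$, where $Fi$ is a cofibration and $Fp$ is a trivial fibration in $\cN$. If $Ff$ is a weak equivalence, then two-out-of-three forces $Fi$ to be a weak equivalence, hence a trivial cofibration in $\cN$. Applying the left Quillen functor $L$ and using $LF\cong \id_\cM$, we deduce that $i$ is a trivial cofibration in $\cM$, so $f = pi$ is a composition of two weak equivalences.

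I do not anticipate any genuine obstacle: the argument is purely formal once one has both adjoints as Quillen functors and the two isomorphisms $LF\cong \id_\cM$ and $RF\cong \id_\cM$. The only subtlety worth flagging is that the full Quillen \emph{equivalence} hypothesis is never invoked in the reflection steps above — only the Quillen functor structure of $F$, $L$, and $R$ is used — so the statement is in fact a bit stronger than needed for the application to $I\colon \cP\cat(\sset)\to \Dsset_{\mathrm{Cat}}$.
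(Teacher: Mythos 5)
Your proposal is correct, and for the reflection half it takes a genuinely different route from the paper. The paper reflects fibrations (and, dually, cofibrations) by transporting lifting problems along the fully faithful $F$: a lifting square in $\cM$ against a trivial cofibration is solved by applying $F$, lifting in $\cN$, and using fullness and faithfulness to pull the lift back; it then reflects weak equivalences by passing to fibrant replacements (which $F$ preserves) and invoking the Quillen \emph{equivalence} hypothesis, namely that the derived counit of $L\dashv F$ is a weak equivalence. You instead exploit the purely categorical consequence of full faithfulness --- the counit $LF\Rightarrow \id_\cM$ and the unit $\id_\cM\Rightarrow RF$ are natural isomorphisms --- so that reflection of (co)fibrations reduces to preservation by the Quillen functors $L$ and $R$, and reflection of weak equivalences follows from a (cofibration, trivial fibration) factorization, 2-out-of-3 in $\cN$, and $LF\cong\id_\cM$. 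Both arguments are short and formal, but yours has the virtue you point out: it never uses that $F$ is a Quillen \emph{equivalence}, only that it is fully faithful and both left and right Quillen, so you prove a slightly stronger statement than the paper's (the equivalence hypothesis is of course satisfied, and needed elsewhere, in the application to $I\colon \cP\cat(\sset)\to \Dsset_{\mathrm{Cat}}$). The paper's lifting-property argument, by contrast, does not need the adjoints' unit/counit analysis but does lean on the equivalence for the weak-equivalence step.
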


\begin{proof}
    Since $F\colon \cM\to \cN$ is both left and right Quillen, it preserves (trivial) fibrations and (trivial) cofibrations. We further prove that it reflects (trivial) fibrations and (trivial) cofibrations. Then, since a map is a weak equivalence if and only if it factors as a trivial cofibration followed by a trivial fibration, then $F$ also preserves and reflects all weak equivalences.
    
    We only show that $F$ reflects fibrations; the case of trivial fibrations is similar, and the case of (trivial) cofibrations is dual. Let $X\to Y$ be a map in $\cM$ such that the induced map $FX\twoheadrightarrow FY$ is a fibration in $\cN$. To see that $X\to Y$ is a fibration in $\cM$, we need to show that it has the right lifting property against every trivial cofibration $A\xhookrightarrow{\simeq} B$ in~$\cM$. Since $F$ is fully faithful, this is equivalent to showing that $FX\twoheadrightarrow FY$ has the right lifting property against $FA\to FB$, for every trivial cofibration $A\xhookrightarrow{\simeq} B$ in $\cM$. This follows from the fact that $F$ preserves trivial cofibrations.
\end{proof}

\section{Properties of the model structure}

In this section, we show that the categorical model structure $\Dsset_{\mathrm{Cat}}$ constructed in \cref{thm:MS} has desirable properties. Specifically, in \cref{sec:cartclosed}, we prove that it is cartesian closed, and in \cref{sec:leftproper}, we establish that it is left proper.

\subsection{Cartesian closedness} \label{sec:cartclosed}

We first prove the following.

 \begin{theorem} \label{thm:cartclosed}
     The model category $\Dsset_\mathrm{Cat}$ from \cref{thm:MS} is cartesian closed.
 \end{theorem}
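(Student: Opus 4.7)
The category $\Dsset$ is a presheaf category and therefore cartesian closed as an ordinary category, with internal hom $Y^X$ recorded in \cref{sec:CSS}. What remains is to verify the pushout-product axiom: for cofibrations $f \colon X \hookrightarrow Y$ and $f' \colon X' \hookrightarrow Y'$ in $\Dsset_\mathrm{Cat}$, the pushout-product
\[ f \mathbin{\square} f' \colon X \times Y' \cup_{X \times X'} Y \times X' \to Y \times Y' \]
is itself a cofibration in $\Dsset_\mathrm{Cat}$, and moreover a weak equivalence whenever one of $f, f'$ is.

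For the trivial-cofibration half of the axiom, I would transfer the problem to the complete Segal space model structure using \cref{thm:idQE}: since $\id \colon \Dsset_\mathrm{Cat} \to \Dsset_\mathrm{CSS}$ is left Quillen, any cofibration (resp.~trivial cofibration) in $\Dsset_\mathrm{Cat}$ is one in $\Dsset_\mathrm{CSS}$. The latter model structure is cartesian closed by \cite[Theorem 7.2]{Rezk}, so if one of $f, f'$ is a trivial cofibration in $\Dsset_\mathrm{Cat}$, then $f \mathbin{\square} f'$ is a trivial cofibration---in particular a weak equivalence---in $\Dsset_\mathrm{CSS}$. By \cref{lem:weinCSStoSegal}, it is then a weak equivalence in $\Dsset_\mathrm{Cat}$ as well.

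It remains to show that $f \mathbin{\square} f'$ is a cofibration in $\Dsset_\mathrm{Cat}$ whenever $f$ and $f'$ are. The monomorphism condition is automatic, so by \cref{Icof,lem:charcofib0} I only need to verify, for each path component $T$ of $(Y \times Y')_0 = Y_0 \times Y'_0$, that either $(f \mathbin{\square} f')_0$ restricts to a weak equivalence onto $T$, or the preimage of $T$ is empty and $T$ is weakly contractible. Each such $T$ decomposes as a product $S \times S'$ of path components $S \subseteq Y_0$ and $S' \subseteq Y'_0$, and a direct computation identifies its preimage in $(X \times Y' \cup_{X \times X'} Y \times X')_0$ with the pushout
\[ f_0^{-1}(S) \times S' \cup_{f_0^{-1}(S) \times {f'_0}^{-1}(S')} S \times {f'_0}^{-1}(S'). \]
A four-way case analysis on the two alternatives of \cref{lem:charcofib0} applied to each of $f$ and $f'$ then concludes the argument. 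When both $f_0^{-1}(S) \to S$ and ${f'_0}^{-1}(S') \to S'$ are weak equivalences, the induced map to $S \times S'$ is the pushout-product in $\sset$ of two trivial cofibrations of simplicial sets, hence a weak equivalence by cartesian closedness of the Kan--Quillen model structure. When one of the two preimages is empty with weakly contractible target, the corresponding piece of the pushout vanishes and the remaining map is the product of a weak equivalence of simplicial sets with a simplicial set, so still a weak equivalence. When both preimages are empty with contractible targets, the preimage is empty and $S \times S' \simeq \Delta[0]$, matching the second alternative of \cref{lem:charcofib0}.

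The main technical content is thus this level-zero case analysis; the conceptually harder trivial-cofibration half of the axiom is reduced, via the Quillen equivalence with $\Dsset_\mathrm{CSS}$, to Rezk's already-known cartesian closedness of the complete Segal space model structure, so I do not anticipate a serious obstacle.
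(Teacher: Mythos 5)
Your proposal is correct, and its harder half follows a genuinely different route from the paper. For the cofibration half your componentwise case analysis via \cref{lem:charcofib0} is sound and amounts to the same computation the paper performs more compactly by adjoining sets of points, i.e.\ by comparing $(X_0\amalg R)\times(X'_0\amalg R')$ with $Y_0\times Y'_0$. For the acyclicity half, the paper stays internal: it reduces to the generating set $\cJ$ of \cref{notn:J}, handles the $\cJ_0$-cofibrations using the cartesian closedness of $\Dsset_\mathrm{Seg}$ recorded in \cref{thm:MSSegal} together with a Dwyer--Kan argument after fibrant replacement, and handles $F[0]\hookrightarrow \NI$ by an adjunction argument showing that $W^{\NI}\to Z^{\NI}\times_Z W$ is an $\cI$-fibration for any isofibration $W\to Z$, which is where the technical isofibration machinery (\cref{technicalisofib}, \cref{prop:path}) is used. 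You instead outsource this step to Rezk's cartesian closedness of the complete Segal space model structure, pushing a trivial cofibration of $\Dsset_\mathrm{Cat}$ into $\Dsset_\mathrm{CSS}$ along the left Quillen identity of \cref{thm:idQE} and pulling the resulting equivalence back with \cref{lem:weinCSStoSegal}; this is not circular, since both of those results are established before and independently of cartesian closedness. Your route is shorter and has the pleasant feature of treating \emph{all} trivial cofibrations of $\Dsset_\mathrm{Cat}$ at once, with no reduction to the generators in $\cJ$ (whose saturation need not exhaust the trivial cofibrations, so that reduction requires a word of justification in the paper's approach); its cost is the extra external input that $\Dsset_\mathrm{CSS}$ is cartesian closed, which the paper's recollection of Rezk does not record, and the loss of the byproduct that the relevant pullback-hom maps are themselves $\cI$-fibrations, which the paper's more hands-on argument establishes along the way.
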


 Let us start by showing that the pushout-product of two $\cI$-cofibrations is an $\cI$-cofibration.

 \begin{prop}
     Let $f\colon X\to Y$ and $f'\colon X'\to Y'$ be $\cI$-cofibrations. The pushout-product map
     \[ f\,\square\,f'\colon X\times Y'\amalg_{X\times X'} Y\times X'\to Y\times Y' \]
     is an $\cI$-cofibration. 
 \end{prop}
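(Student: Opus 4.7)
The plan is to verify the two conditions of \Cref{Icof} directly. Condition (1) is immediate: the pushout-product of two monomorphisms in $\Dsset$ is a monomorphism, since monomorphisms in $\Dsset$ are detected levelwise and $\sset$ is a topos in which pushout-products of monos are mono.

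For condition (2), we work at level $0$. Taking the $0$-level of the pushout-product yields the pushout-product $g=f_0\,\square\,f'_0$ in $\sset$ of the two monomorphisms $f_0\colon X_0\hookrightarrow Y_0$ and $f'_0\colon X'_0\hookrightarrow Y'_0$, which, by assumption and \Cref{lem:charcofib0}, both satisfy the connected-component criterion (ii) of that lemma. I would then verify that $g$ satisfies condition (ii) of \Cref{lem:charcofib0}, using that $\pi_0$ commutes with products of simplicial sets, so that every connected component of $Y_0\times Y'_0$ has the form $S\times S'$ for connected components $S\subseteq Y_0$ and $S'\subseteq Y'_0$. Writing $A_S\coloneqq f_0^{-1}(S)$ and $A'_{S'}\coloneqq (f'_0)^{-1}(S')$, regarded as inclusions, the preimage of $S\times S'$ under $g$ is the sub-simplicial set
\[ g^{-1}(S\times S')=A_S\times S'\amalg_{A_S\times A'_{S'}} S\times A'_{S'}\subseteq S\times S'. \]

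The verification then splits into three cases. If both $A_S$ and $A'_{S'}$ are empty, the preimage is empty and $S\times S'\simeq \Delta[0]\times\Delta[0]\simeq \Delta[0]$, which is the second alternative in \Cref{lem:charcofib0}~(ii). If exactly one is empty, say $A'_{S'}=\emptyset$, then $S'\simeq\Delta[0]$ and $g^{-1}(S\times S')=A_S\times S'$ maps to $S\times S'$ by the weak equivalence $A_S\xrightarrow{\simeq} S$ crossed with $S'$, so the first alternative holds. The main case is when both $A_S$ and $A'_{S'}$ are nonempty, so that $A_S\xrightarrow{\simeq} S$ and $A'_{S'}\xrightarrow{\simeq} S'$ are weak equivalences in $\sset$. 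Here the key step, and the only nontrivial one, is to apply the gluing lemma in the left proper model category $\sset$ to the commutative diagram
\[ \begin{array}{ccccc} A_S\times S' & \leftarrow & A_S\times A'_{S'} & \rightarrow & S\times A'_{S'} \\ \downarrow & & \downarrow & & \downarrow \\ S\times S' & \leftarrow & S\times S' & \rightarrow & S\times S', \end{array} \]
in which the top horizontal maps are monomorphisms and all three vertical maps are weak equivalences (being products of weak equivalences in $\sset$). The induced map on pushouts is then a weak equivalence $g^{-1}(S\times S')\xrightarrow{\simeq} S\times S'$, as required.

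I expect the gluing lemma step to be the only real obstacle; the rest is case bookkeeping and a reduction to level $0$. Having established both clauses of \Cref{Icof}, we conclude that $f\,\square\,f'$ is an $\cI$-cofibration.
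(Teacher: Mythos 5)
Your proof is correct, and for the only substantive point -- the level-$0$ condition -- it takes a genuinely different route from the paper. The paper verifies condition (2) of \cref{Icof} directly in its ``$\amalg R$'' form: it takes the new set to be $R\times R'$, uses distributivity of products over coproducts to identify $(X_0\amalg R)\times(X'_0\amalg R')$ with the coproduct of the pushout-product's codomain data, and then concludes by $2$-out-of-$3$ against the product weak equivalence $(X_0\amalg R)\times(X'_0\amalg R')\xrightarrow{\simeq}Y_0\times Y'_0$. You instead pass through the componentwise characterization (ii) of \cref{lem:charcofib0}, decompose $Y_0\times Y'_0$ into components $S\times S'$ (using that $\pi_0$ preserves products), and handle the three cases, with the gluing lemma in the left proper model category $\sset$ doing the work in the main case; this is legitimate since the legs you push out along are monomorphisms and all objects of $\sset$ are cofibrant. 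The trade-off: the paper's argument is shorter and exhibits the new set explicitly as $R\times R'$, while yours makes the geometry of the new components visible at the cost of some case bookkeeping; note also that the paper's claim that its first comparison map is a weak equivalence secretly uses the same gluing-type ingredient you invoke openly, so the two arguments rest on the same standard facts (products preserve weak equivalences in $\sset$, plus gluing/left properness). Your treatment of the monomorphism clause and the reduction $(f\,\square\,f')_0=f_0\,\square\,f'_0$ (products and pushouts in $\Dsset$ being levelwise) matches the paper's.
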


 \begin{proof}
     By \cref{Icof}, the $\cI$-cofibrations $f\colon X\to Y$ and $f'\colon X'\to Y'$ are monomorphisms and there are sets $R$ and $R'$ and weak equivalences $X_0\amalg R\xrightarrow{\simeq} Y_0$ and $X'_0\amalg R'\xrightarrow{\simeq} Y'_0$ in $\sset$ whose restrictions to $X_0$ and $X'_0$ are $f_0$ and $f'_0$, respectively. We show that the pushout-product map $f\,\square\,f'$ is an $\cI$-cofibration by verifying the conditions from \cref{Icof}. 

     Since the maps $f$ and $f'$ are monomorphisms, so is their pushout-product $f\,\square\,f'$. Consider the following maps in $\sset$
     \begin{tz}
         \node[](1) {$\left( X_0\times (X'_0\amalg R')\amalg_{X_0\times X'_0} (X_0\amalg R)\times X'_0\right)\amalg R\times R'$};
         \node[below of=1,yshift=-.2cm](2) {$(X_0\amalg R)\times (X'_0\amalg R')$}; 
         \node(2') at ($(1)!0.55!(2)$) {$\left( X_0\times X'_0\amalg X_0\times R'\amalg R\times X'_0\right) \amalg R\times R'$};
         \node[right of=2,xshift=7cm](3) {$Y_0\times Y'_0$}; 
         \node[above of=3,yshift=.2cm](4) {$\left( X_0\times Y'_0\amalg_{X_0\times X'_0} Y_0\times X'_0\right)\amalg R\times R'$};
         
         \node at ($(1)!0.5!(2')$) {\rotatebox{270}{$\cong$}};
         \node at ($(2)!0.5!(2')$) {\rotatebox{270}{$\cong$}};
         \draw[->](1) to node[above,la]{$\simeq$} (4);
         \draw[->](2) to node[above,la]{$\simeq$} (3);
         \draw[->](4) to (3);
     \end{tz}
     The top map is a weak equivalence in $\sset$ as a map between homotopy pushouts induced from a diagram of weak equivalences, and the bottom map is a weak equivalence in $\sset$ since this is cartesian closed. Hence, by $2$-out-of-$3$, the right-hand map is a weak equivalence in $\sset$ whose restriction to $X_0\times Y'_0\amalg_{X_0\times X'_0} Y_0\times X'_0$ is $(f\,\square\,f')_0$.
     This shows that $f\,\square\,f'$ is an $\cI$-cofibration. 
 \end{proof}

 We now further show that, if one of the two $\cI$-cofibrations is a $\cJ$-cofibration, then their pushout-product is in fact a weak equivalence in $\Dsset_\mathrm{Cat}$. By \cite[Proposition E.2.14]{JoyalVolumeII}, this is sufficient to conclude the proof of \cref{thm:cartclosed}. 

  \begin{prop}
     Let $f\colon X\to Y$ be an $\cI$-cofibration and $f'\colon X'\to Y'$ be a $\cJ$-cofibration. The pushout-product map
     \[ f\,\square\,f'\colon X\times Y'\amalg_{X\times X'} Y\times X'\to Y\times Y' \]
     is a weak equivalence in $\Dsset_\mathrm{Cat}$.
 \end{prop}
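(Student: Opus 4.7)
The plan is to deduce this from the cartesian closedness of the complete Segal space model structure together with the Quillen equivalence $\id\colon \Dsset_\mathrm{Cat}\rightleftarrows \Dsset_\mathrm{CSS}\cocolon \id$ from \cref{thm:idQE}. The previous proposition already shows that $f\,\square\,f'$ is an $\cI$-cofibration, hence in particular a cofibration in $\Dsset_\mathrm{Cat}$, so only the weak equivalence part remains to be established.

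The first step is to reduce the statement to the CSS model structure: by \cref{lem:weinCSStoSegal}, it suffices to prove that $f\,\square\,f'$ is a weak equivalence in $\Dsset_\mathrm{CSS}$. The second step is to transport $f$ and $f'$ across the Quillen equivalence. Since $\id\colon \Dsset_\mathrm{Cat}\to \Dsset_\mathrm{CSS}$ is left Quillen by \cref{thm:idQE}, it preserves both cofibrations and trivial cofibrations. Therefore the $\cI$-cofibration $f$ is a cofibration in $\Dsset_\mathrm{CSS}$, and the $\cJ$-cofibration $f'$, being a trivial cofibration in $\Dsset_\mathrm{Cat}$, is a trivial cofibration in $\Dsset_\mathrm{CSS}$.

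The final step is to invoke the fact that $\Dsset_\mathrm{CSS}$ is a cartesian closed model category, as shown by Rezk in \cite{Rezk}. This immediately yields that the pushout-product $f\,\square\,f'$ of a cofibration with a trivial cofibration is itself a trivial cofibration in $\Dsset_\mathrm{CSS}$, and hence a weak equivalence there, completing the argument.

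I do not foresee any real obstacle: the entire proof is formal once the previous theorems are in place. The key conceptual point is that $\Dsset_\mathrm{Cat}$ was constructed precisely so that its class of weak equivalences between Segal spaces agrees with that of $\Dsset_\mathrm{CSS}$, while its class of cofibrations is contained in that of $\Dsset_\mathrm{CSS}$, which makes the cartesian closedness transfer from the complete Segal space model structure to the categorical one without further work.
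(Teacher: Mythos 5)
Your proof is correct, but it takes a genuinely different route from the paper's. You outsource the whole statement to the complete Segal space model structure: $f$ is a monomorphism, hence a CSS-cofibration; $f'$, being a trivial cofibration in $\Dsset_\mathrm{Cat}$, becomes a trivial cofibration in $\Dsset_\mathrm{CSS}$ because $\id\colon\Dsset_\mathrm{Cat}\to\Dsset_\mathrm{CSS}$ is left Quillen (\cref{thm:idQE}); Rezk's pushout-product axiom for $\Dsset_\mathrm{CSS}$ then makes $f\,\square\,f'$ a CSS-trivial cofibration, and \cref{lem:weinCSStoSegal} transports the weak equivalence back to $\Dsset_\mathrm{Cat}$. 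There is no circularity, since \cref{thm:idQE,lem:weinCSStoSegal} precede this section and do not rely on cartesian closedness of $\Dsset_\mathrm{Cat}$. The paper instead avoids invoking the cartesian compatibility of $\Dsset_\mathrm{CSS}$ (which is not part of \cref{thm:MSCSS} as stated): it treats $\cJ_0$-cofibrations via the cartesian closedness of $\Dsset_\mathrm{Seg}$ (\cref{thm:MSSegal}) together with \cref{rem:J0fib,levelwiseDK}, and treats $F[0]\hookrightarrow\NI$ by adjunction, showing that $W^{\NI}\to Z^{\NI}\times_Z W$ is an $\cI$-fibration for every isofibration $W\to Z$ between Segal spaces (\cref{technicalisofib,prop:path}). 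Your argument is shorter and applies verbatim to an arbitrary trivial cofibration $f'$ of $\Dsset_\mathrm{Cat}$, not just a $\cJ$-cofibration, so it delivers the pushout-product axiom needed for \cref{thm:cartclosed} in one stroke; its cost is the external input of Rezk's cartesianness of $\Dsset_\mathrm{CSS}$ plus the Section 4 comparison results, whereas the paper's proof stays internal to its stated ingredients and extracts extra information about exponentials by $\NI$ of isofibrations that is useful elsewhere (e.g.\ for path objects). One minor point: your step that a $\cJ$-cofibration is a trivial cofibration in $\Dsset_\mathrm{Cat}$ uses that $\cJ$-cofibrations are weak equivalences there, which is true (and used in the proof of \cref{lem:weinCSStoSegal}) but stems from the construction rather than the statement of \cref{thm:MS}; you could bypass it by quoting directly, as that proof does, that $\cJ$-cofibrations are trivial cofibrations in $\Dsset_\mathrm{CSS}$.
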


 \begin{proof}
    We first deal with the case where $f'$ is a $\cJ_0$-cofibration, and hence a trivial cofibration in $\Dsset_\mathrm{Seg}$ by \cref{rem:J0fib}. By \cref{Icof}, the $\cI$-cofibration $f\colon X\to Y$ is a monomorphism, and so a cofibration in $\Dsset_\mathrm{Seg}$. Since $\Dsset_\mathrm{Seg}$ is cartesian closed, we get that the pushout-product map $f\,\square\, f'$ is weak equivalence in $\Dsset_\mathrm{Seg}$. By $2$-out-of-$3$, a $\cJ_0$-fibrant replacement of $f\,\square\,f'$ is also a weak equivalence in $\Dsset_\mathrm{Seg}$ between Segal spaces, and so a Dwyer--Kan equivalence by \cref{levelwiseDK}. This shows that $f\,\square\,f'$ is a weak equivalence. 

   To treat the general case, it is now enough to treat the case $f'$ being $F[0]\hookrightarrow \NI$, since $\cJ$-cofibrations are generated by maps in $\cJ_0$ and $F[0]\hookrightarrow \NI$. Showing that the $\cI$-cofibration $X\times \NI\amalg_{X} Y\to Y\times \NI$ is a weak equivalence is equivalent to showing that, for every $\cJ$-fibration $W\to Z$ between $\cJ$-fibrant objects, the induced map 
   \[ W^{\NI}\to Z^{\NI}\times_Z W \]
   is an $\cI$-fibration. We prove this by showing that it is both an isofibration and a Dwyer--Kan equivalence. Since $W\to Z$ is an isofibration between Segal spaces, then \cref{technicalisofib} shows that the desired map is an isofibration. Moreover, by \cref{technicalisofib,prop:path}, the maps $W^{\NI}\to W$ and $Z^{\NI}\to Z$ are both isofibrations and Dwyer--Kan equivalences; hence they are $\cI$-fibrations. Then, in the composite
   \[ W^{\NI}\to Z^{\NI}\times_Z W \to W, \]
   the second map is an $\cI$-fibration as the pullback of $Z^{\NI}\to Z$ and the composite is a Dwyer--Kan equivalence. Hence, the first map is a Dwyer--Kan equivalence as well, by $2$-out-of-$3$. 
 \end{proof}

 \subsection{Left properness} \label{sec:leftproper}

Next, we show that the categorical model structure is left proper.

\begin{theorem} \label{thm:leftproper}
    The model category $\Dsset_\mathrm{Cat}$ from \cref{thm:MS} is left proper.
\end{theorem}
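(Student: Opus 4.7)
The plan is to reduce left properness of $\Dsset_\mathrm{Cat}$ to that of $\Dsset_\mathrm{CSS}$. The latter is left proper because it is a left Bousfield localization of the Reedy model structure on $\Dsset$, which is itself left proper since $\sset$ is. The reduction rests on two observations. First, every cofibration in $\Dsset_\mathrm{Cat}$ is a monomorphism by \cref{Icof}, hence a cofibration in $\Dsset_\mathrm{CSS}$. Second, I claim that the classes of weak equivalences of $\Dsset_\mathrm{Cat}$ and $\Dsset_\mathrm{CSS}$ coincide as subclasses of $\Dsset$. Granting this, a pushout of a $\Dsset_\mathrm{Cat}$-weak equivalence along a $\Dsset_\mathrm{Cat}$-cofibration is also a pushout of a $\Dsset_\mathrm{CSS}$-weak equivalence along a $\Dsset_\mathrm{CSS}$-cofibration, which by left properness of $\Dsset_\mathrm{CSS}$ yields a $\Dsset_\mathrm{CSS}$-weak equivalence, and hence a $\Dsset_\mathrm{Cat}$-weak equivalence.

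One inclusion of weak equivalences is precisely \cref{lem:weinCSStoSegal}. For the reverse inclusion, given $f\colon X\to Y$ a weak equivalence in $\Dsset_\mathrm{Cat}$, I construct compatible Segal space replacements of $X$ and $Y$ as follows. First fix a $\cJ_0$-cofibration $j_Y\colon Y\to \widetilde{Y}$ with $\widetilde{Y}$ a Segal space. Then factor the composite $j_Y\circ f$ as a $\cJ_0$-cofibration $j_X\colon X\to \widetilde{X}$ followed by a $\cJ_0$-fibration $\widetilde{f}\colon \widetilde{X}\to \widetilde{Y}$; since $\widetilde{Y}$ is $\cJ_0$-fibrant and $\widetilde{f}$ is a $\cJ_0$-fibration, so is $\widetilde{X}$, which is therefore a Segal space by \cref{rem:J0fib}. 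By the same remark, $j_X$ and $j_Y$ are trivial cofibrations in $\Dsset_\mathrm{Seg}$, and hence weak equivalences in both $\Dsset_\mathrm{CSS}$ (a Bousfield localization of $\Dsset_\mathrm{Seg}$) and $\Dsset_\mathrm{Cat}$ (as $\cJ_0$-cofibrations are in particular $\cJ$-cofibrations, hence trivial cofibrations). Two-out-of-three in $\Dsset_\mathrm{Cat}$ shows that $\widetilde{f}$ is a weak equivalence between Segal spaces in $\Dsset_\mathrm{Cat}$, and hence a Dwyer--Kan equivalence. By \cref{thm:MSCSS}, $\widetilde{f}$ is then also a weak equivalence in $\Dsset_\mathrm{CSS}$, and two-out-of-three in $\Dsset_\mathrm{CSS}$ yields that $f$ is as well.

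The main obstacle is this reverse inclusion of weak equivalences; once established, left properness follows formally. The key subtlety is verifying that the constructed $\cJ_0$-fibrant replacements $\widetilde{X}$ and $\widetilde{Y}$ are genuine Segal spaces, so that the characterization of weak equivalences between Segal spaces in $\Dsset_\mathrm{CSS}$ as Dwyer--Kan equivalences can be invoked.
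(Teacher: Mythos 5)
Your argument is correct, and it is built from the same ingredients as the paper's proof (left properness of $\Dsset_\mathrm{CSS}$, the fact that $\cI$-cofibrations are monomorphisms via \cref{Icof}, the characterization of weak equivalences between Segal spaces as Dwyer--Kan equivalences, and \cref{lem:weinCSStoSegal}), but it is organized differently. You first establish the stronger global statement that the weak equivalences of $\Dsset_\mathrm{Cat}$ and $\Dsset_\mathrm{CSS}$ coincide as classes of maps in $\Dsset$ --- one inclusion being \cref{lem:weinCSStoSegal}, the other proved by a $\cJ_0$-fibrant replacement argument that mirrors the proof of that lemma, using \cref{rem:J0fib}, \cref{lem:cJ0}, $2$-out-of-$3$, and \cref{thm:MSCSS}~(iii) --- and then left properness follows formally. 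The paper instead avoids stating this coincidence: it factors the given weak equivalence (after arranging its target to be a Segal space) as a $\cJ$-cofibration followed by a $\cJ$-fibration, splits the pushout square in two accordingly, and treats the two halves separately, using that $\cJ$-cofibrations are stable under pushout and are weak equivalences for the first half, and left properness of $\Dsset_\mathrm{CSS}$ together with \cref{lem:weinCSStoSegal} for the second. Your route buys a cleaner deduction and makes explicit a fact of independent interest (which is only implicit in the paper, e.g.\ via \cref{thm:idQE}); the paper's route keeps the argument local to the pushout square. Two small points you rely on are also used without further comment by the paper and are fine: that $\cJ$-cofibrations (in particular $\cJ_0$-cofibrations) are weak equivalences in $\Dsset_\mathrm{Cat}$, and that $\Dsset_\mathrm{CSS}$ is left proper, which you correctly justify as a left Bousfield localization of the left proper Reedy model structure.
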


\begin{proof}
    Let $i\colon A\to B$ be an $\cI$-cofibration and $f\colon A\to C$ be a weak equivalence $\Dsset_\mathrm{Cat}$. Consider the pushout $g\colon B\to D$ of 
    $f$ along $i$; we want to prove that $g$ is also a weak equivalence in $\Dsset_\mathrm{Cat}$. First note that, without loss of generality, we can assume that $C$ is a Segal space, as $\cJ$-cofibrations are closed under pushouts and weak equivalences in $\Dsset_\mathrm{Cat}$ satisfy $2$-out-of-$3$. 
    
    Next, we factor the weak equivalence $f$ into a $\cJ$-cofibration $j$ followed by a $\cJ$-fibration $p$, so that the below left pushout square factors as the composite of pushout squares in $\Dsset$. 
    \begin{tz}
        \node[](1) {$A$}; 
        \node[right of=1](2) {$C$}; 
        \node[below of=1](3) {$B$}; 
        \node[below of=2](4) {$D$}; 

        \draw[->](1) to node[above,la]{$f$} (2); 
        \draw[->](3) to node[below,la]{$g$} (4); 
        \draw[->](1) to node[left,la]{$i$} (3); 
        \draw[->](2) to node[right,la]{} (4);
        \pushout{4};
        
        \node[right of=2,xshift=.5cm](1) {$A$}; 
        \node at ($(1)!0.5!(4)$) {$=$};
        \node[right of=1](2) {$A'$}; 
        \node[right of=2](2') {$C$}; 
        \node[below of=1](3) {$B$};
        \node[below of=2](4) {$B'$}; 
        \node[below of=2'](4') {$D$}; 

        \draw[->](1) to node[above,la]{$j$} (2); 
        \draw[->](3) to node[below,la]{$k$} (4); 
        \draw[->](2) to node[above,la]{$p$} (2'); 
        \draw[->](4) to node[below,la]{$q$} (4'); 
        \draw[->](1) to node[left,la]{$i$} (3); 
        \draw[->](2) to node[left,la]{$i'$} (4);
        \draw[->](2') to node[left,la]{} (4');
        \pushout{4};
        \pushout{4'};
    \end{tz}
    We need to check that $qk$ is a weak equivalence. Note that $k$ is a $\cJ$-cofibration, and hence a weak equivalence, as it is the pushout of the $\cJ$-cofibration $j$. To see that $q$ is a weak equivalence, note that the pushout $i'$ of $i$ is an $\cI$-cofibration, so in particular a monomorphism by \cref{Icof}. Since $f$ and $j$ are both weak equivalences in $\Dsset_\mathrm{Cat}$ and $C$ is a Segal space, the $2$-out-of-$3$ property implies that the $\cJ$-fibration $p$ is a weak equivalence in $\Dsset_\mathrm{Cat}$ between Segal spaces, and hence a Dwyer--Kan equivalence. By \cref{thm:MSCSS} (iii), the map $p$ is then a weak equivalence in $\Dsset_\mathrm{CSS}$. As the model structure $\Dsset_\mathrm{CSS}$ is left proper, the pushout $q$ of $p$ along the monomorphism $i'$ is a weak equivalence in $\Dsset_\mathrm{CSS}$. By \cref{lem:weinCSStoSegal}, the map $q$ is a weak equivalence in $\Dsset_\mathrm{Cat}$, so that the composite $qk$ is a weak equivalence in $\Dsset_\mathrm{Cat}$ as well.
\end{proof}

\section{Applications}

In this last section, we present two notable consequences of the existence of the categorical model structure. In \cref{sec:nerveofcat}, we show that the inclusion of categories into $(\infty,1)$-categories has a particularly simple description in terms of the model category $\Dsset_\mathrm{Cat}$. Specifically, we prove that the discrete nerve functor is right Quillen. Then, in \cref{sec:homotopylimit}, we show that homotopy limits of $(\infty,1)$-categories can be computed in a manner similar to homotopy limits of ordinary categories. Specifically, we prove that the limit of an injectively fibrant diagram for $\Dsset_\mathrm{Cat}$ is already a homotopy limit for the complete Segal space model structure.

\subsection{Nerves of categories} \label{sec:nerveofcat}

We start by showing that the nerve functor $N\colon \cat\to \Dsset_\mathrm{Cat}$ is right Quillen, where $\cat$ is endowed with the categorical model structure, in which the weak equivalences (resp.~fibrations) are the equivalences of categories (resp.~isofibrations).

\begin{prop} \label{prop:nerve}
   The adjunction from \cref{nerve}
    \[c\colon \Dsset_\mathrm{Cat}\rightleftarrows\mathrm{Cat}\cocolon N \]
    is a Quillen pair, whose derived counit is an equivalence of categories.
\end{prop}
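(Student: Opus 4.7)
The plan is to invoke the Joyal--Tierney criterion \cite[Proposition E.2.14]{JoyalVolumeII}, already used in the paper, which reduces verifying that $c\dashv N$ is a Quillen pair to checking (i) that $c$ preserves cofibrations and (ii) that $N$ preserves fibrations between fibrant objects. For the derived counit, I will observe that every nerve $N\cC$ is already cofibrant in $\Dsset_\mathrm{Cat}$, so that the derived counit at $\cC$ is the (ordinary) counit $cN\cC\to\cC$, which is an isomorphism by full faithfulness of the classical nerve.

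For (i), the key observation is that $c\colon \Dsset\to \cat$ factors as $\Dsset\xrightarrow{\pi_0}\sset\xrightarrow{c}\cat$, where the first functor sends $X$ to the simplicial set $[m]\mapsto \pi_0(X_{m,-})$ and the second is the classical left adjoint of the simplicial nerve; this factorization is just the dual of the fact that $N\cC$ is discrete in the space direction. In particular, the object set of $c(X)$ is $\pi_0(X_{0,-})$. Starting from a cofibration $f\colon X\hookrightarrow Y$ in $\Dsset_\mathrm{Cat}$, \cref{Icof} supplies a set $R$ and a weak equivalence $X_0\amalg R\xrightarrow{\simeq} Y_0$ extending $f_0$; applying $\pi_0$ yields a bijection $\pi_0(X_{0,-})\amalg R\cong \pi_0(Y_{0,-})$, so that $c(f)$ is injective on objects, i.e., a cofibration in $\cat$.

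For (ii), I will start with an isofibration $p\colon\cC\to\cD$ in $\cat$ and argue that $Np$ is a fibration in $\Dsset_\mathrm{Cat}$. By \cref{nerve}, $N\cC$ and $N\cD$ are Segal spaces, so by \cref{Jfibration} it suffices to check that $Np$ is a Reedy fibration and that the induced functor on homotopy categories is an isofibration. The Reedy condition follows at once from the fact that each relative matching map of $Np$ is a function between discrete simplicial sets, hence automatically a Kan fibration. For the homotopy category, since $(N\cC)_0$ is already discrete one has $R(N\cC) = N\cC$, so \cref{lem:charhtpycat} gives $\ho(N\cC)\cong c(N\cC)\cong \cC$ (and similarly for $\cD$), identifying $\ho(Np)$ with the isofibration $p$.

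Finally, for the derived counit: $(N\cC)_0$ is the constant simplicial set at $\mathrm{Ob}(\cC)$, so by \cref{Icof} the simplicial space $N\cC$ is cofibrant. Combined with the fact that every category is fibrant in $\cat$, this reduces the derived counit at $\cC$ to the ordinary counit $cN\cC\to\cC$, which is an isomorphism by full faithfulness of the classical nerve $\cat\to\sset$. No step presents a serious obstacle; the only subtlety is correctly identifying the object set of $c(X)$ with $\pi_0(X_{0,-})$, so that the cofibration description of \cref{Icof} directly yields injectivity on objects.
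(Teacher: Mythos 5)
Your argument is correct, and its skeleton matches the paper's: both reduce to a Joyal-type criterion, both check that $N$ sends isofibrations to fibrations between fibrant objects by observing that maps of levelwise discrete simplicial spaces are Reedy fibrations and that $\ho(N\cC)\cong \cC$ (you justify the latter via $R(N\cC)=N\cC$ and \cref{lem:charhtpycat}, which is exactly the right mechanism), and both conclude about the derived counit from the cofibrancy of $N\cC$ (\cref{Icof}) and full faithfulness of the nerve. The only real divergence is in the other half of the Quillen-pair condition: the paper stays on the right-adjoint side and shows that $N$ sends trivial fibrations of $\cat$ (fully faithful functors surjective on objects) to $\cI$-fibrations, using \cref{charDK} together with \cref{proofof4}, whereas you work on the left-adjoint side and show directly that $c$ preserves cofibrations, via the factorization of $c$ through levelwise $\pi_0$ in the space direction (valid, since $N$ lands in levelwise discrete simplicial spaces and the discrete inclusion $\set\hookrightarrow\sset$ has left adjoint $\pi_0$), so that $\mathrm{Ob}(cX)\cong\pi_0(X_{0,-})$ and applying $\pi_0$ to the equivalence $X_0\amalg R\xrightarrow{\simeq}Y_0$ of \cref{Icof} gives injectivity on objects. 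These are adjoint transposes of the same condition and both are sound; your route buys independence from \cref{proofof4} at the cost of an explicit description of $c$, while the paper's reuses lemmas it has already established and never needs to compute $c$ at all.
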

\begin{proof}
    Any map of sets is a fibration in $\sset$, so that any map between simplicial sets (viewed as simplicial spaces) is a Reedy fibration. Since $N\cC$ satisfies the Segal conditions, for every category $\cC$, and the nerve preserves isofibrations, it follows that $N$ preserves fibrations. Next, the trivial fibrations $f\colon \cC\to \cD$ in $\cat$ are the functors that are fully faithful and surjective on objects, so that the induced map $Nf\colon N\cC\to N\cD$ between the nerves satisfies the equivalent conditions of \cref{charDK}. By \cref{proofof4}, the map $Nf$ is then an $\cI$-fibration. This implies that $N$ is a right Quillen functor.

    Finally, the nerve $N\cC$ of any category $\cC$ is cofibrant by \cref{Icof}, so that the derived counit is equivalent to the counit. The latter is an isomorphism as the nerve is fully faithful.
\end{proof}

\subsection{Homotopy limits of \texorpdfstring{$(\infty,1)$}{(infinity,1)}-categories} \label{sec:homotopylimit}

Next, we demonstrate that completing the Segal spaces involved is not required to compute homotopy pullbacks in the complete Segal space model structure.

\begin{prop} \label{prop:pullback}
    Let $f\colon X\to Y$ be an isofibration between Segal spaces, and $Z$ be a Segal space. Then any pullback square in $\Dsset$ of the form
    \begin{tz}
        \node[](1) {$P$}; 
        \node[right of=1,xshift=.2cm](2) {$X$}; 
        \node[below of=1](3) {$Z$}; 
        \node[below of=2](4) {$Y$}; 
        \pullback{1};

        \draw[->](1) to (2); 
        \draw[->](3) to node[below,la]{} (4); 
        \draw[->](1) to (3); 
        \draw[->](2) to node[right,la]{$f$} (4); 
    \end{tz}
    is a homotopy pullback in $\Dsset_{\mathrm{CSS}}$.
\end{prop}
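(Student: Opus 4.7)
The strategy has three parts: first I will verify that the strict pullback $P = Z\times_Y X$ is already a Segal space; next, I will show that the square is a homotopy pullback in $\Dsset_{\mathrm{Cat}}$; and finally I will transport this to $\Dsset_{\mathrm{CSS}}$ via the Quillen equivalence of \cref{thm:idQE}.

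Since $f$ is an isofibration, it is in particular a Reedy fibration, and hence by \cref{thm:MSSegal} a fibration in $\Dsset_{\mathrm{Seg}}$ between fibrant objects. The pullback $P\to Z$ is therefore a fibration in $\Dsset_{\mathrm{Seg}}$ whose codomain $Z$ is a Segal space, so $P$ is also fibrant in $\Dsset_{\mathrm{Seg}}$, proving the first assertion. For the second assertion, \cref{thm:MS} identifies the isofibration $f$ with a fibration in $\Dsset_{\mathrm{Cat}}$ between fibrant objects, and all four corners are Segal spaces and hence fibrant in $\Dsset_{\mathrm{Cat}}$. Standard model-categorical arguments then imply that the strict pullback of a fibration between fibrant objects along a map from a fibrant object computes the homotopy pullback: factoring $g\colon Z\to Y$ as a trivial cofibration $Z\hookrightarrow Z'$ followed by a fibration $Z'\twoheadrightarrow Y$ in $\Dsset_{\mathrm{Cat}}$, the cospan $Z'\to Y\leftarrow X$ has both legs fibrations with fibrant apex, so $Z'\times_Y X$ is visibly a homotopy pullback; the gluing lemma then implies the induced map $P = Z\times_Y X \to Z'\times_Y X$ is a weak equivalence.

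Finally, let $\hat P$ be any model for the homotopy pullback of $Z\to Y\leftarrow X$ computed in $\Dsset_{\mathrm{CSS}}$, with canonical comparison map $P\to \hat P$. Since $\id\colon \Dsset_{\mathrm{CSS}}\to \Dsset_{\mathrm{Cat}}$ is the right adjoint of a Quillen equivalence by \cref{thm:idQE}, it preserves homotopy limits, so $\hat P$ is also a homotopy pullback in $\Dsset_{\mathrm{Cat}}$ of the same cospan. Hence $P\to \hat P$ is a weak equivalence in $\Dsset_{\mathrm{Cat}}$ between Segal spaces, thus a Dwyer--Kan equivalence by \cref{thm:MS}(iii), and consequently a weak equivalence in $\Dsset_{\mathrm{CSS}}$ by \cref{thm:MSCSS}(iii). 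The main obstacle to anticipate is the argument in the second step: $\Dsset_{\mathrm{Cat}}$ is not known to be right proper, so one cannot directly invoke right properness; the gluing-lemma factorization above circumvents this by exploiting the fact that $X\to Y$ is already a fibration.
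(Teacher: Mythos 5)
Your proof is correct, but it takes a genuinely different route from the paper's. The paper does not argue \cref{prop:pullback} directly: it deduces it from the subsequent, more general statement that the limit of a diagram which is injectively fibrant for $\Dsset_\mathrm{Cat}$ is a homotopy limit in $\Dsset_\mathrm{CSS}$, proved by comparing the strict limit with the limit of an injective fibrant replacement of the diagram in $\Dsset_\mathrm{CSS}$ and using that weak equivalences in $\Dsset_\mathrm{CSS}$ between Segal spaces are Dwyer--Kan equivalences, hence weak equivalences between fibrant objects of $\Dsset_\mathrm{Cat}$. You instead stay with the cospan itself: you first check the square is a homotopy pullback in $\Dsset_\mathrm{Cat}$ by factoring $Z\to Y$, and then transport the conclusion along the Quillen equivalence of \cref{thm:idQE}. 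Your route is in fact more careful on a point the paper's ``special instance'' reduction leaves implicit: an injectively fibrant cospan in $\Dsset_\mathrm{Cat}$ requires \emph{both} legs to be isofibrations, whereas the proposition only assumes this of $f$, and your factorization of $Z\to Y$ is exactly the reduction needed; conversely, the paper's argument buys the general statement for arbitrary homotopy limits with no discussion of homotopy pullbacks in $\Dsset_\mathrm{Cat}$ or of preservation of homotopy limits by derived right adjoints. Two small repairs to your write-up: (1) invoke the gluing lemma in the form valid in \emph{all} model categories (all objects fibrant and the legs being pulled back along fibrations, i.e.\ the dual of the cube lemma), or equivalently the fact that in any model category the pullback of a weak equivalence between fibrant objects along a fibration is a weak equivalence; this applies here since $Z$, $Z'$, $X$, $Y$ are all fibrant in $\Dsset_\mathrm{Cat}$, and it is indeed what circumvents the absence of right properness. (2) In the last step, the derived right adjoint of \cref{thm:idQE} applied to the cospan replaces $X,Y,Z$ by their completions, so to identify its homotopy pullback with that of the original cospan you should add that the completion maps are weak equivalences in $\Dsset_\mathrm{CSS}$ between Segal spaces, hence weak equivalences in $\Dsset_\mathrm{Cat}$ by \cref{thm:MSCSS}~(iii) (or \cref{lem:weinCSStoSegal}); with this, the comparison map $P\to \widehat{P}$ is a weak equivalence in $\Dsset_\mathrm{Cat}$ between Segal spaces and your conclusion follows as stated.
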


In fact, using \cite[Proposition 19.9.4]{Hirschhorn}, this is a special instance of the following result for more general homotopy limits. 

\begin{prop}
    Let $\cC$ be a small category and consider a diagram $F\colon \cC\to \Dsset$. Then any homotopy limit of $F$ in $\Dsset_\mathrm{Cat}$ is a homotopy limit in $\Dsset_{\mathrm{CSS}}$.
\end{prop}

\begin{proof}
    Consider an injective fibrant replacement $F\Rightarrow\widehat{F}$ in the model structure of diagrams $\cC\to \Dsset_\mathrm{Cat}$, and an injective fibrant replacement $\widehat{F}\Rightarrow \widetilde{F}$ in the model structure of diagrams $\cC\to \Dsset_\mathrm{CSS}$. For every object $c\in \cC$, the induced map $\widehat{F}(c)\to \widetilde{F}(c)$ is a weak equivalence in $\Dsset_\mathrm{CSS}$ between Segal spaces, and so a Dwyer--Kan equivalence by \cref{thm:MSCSS} (iii). As $\id\colon \Dsset_\mathrm{CSS}\to \Dsset_\mathrm{Cat}$ is a right Quillen functor, the diagram $\widetilde{F}\colon \cC\to \Dsset$ is also injectively fibrant as a diagram $\cC\to \Dsset_\mathrm{Cat}$. Hence, we have a levelwise weak equivalence $\widehat{F}\Rightarrow \widetilde{F}$ between injectively fibrant diagrams in the model structure of diagrams $\cC\to \Dsset_\mathrm{Cat}$. As a consequence, the induced map $\mathrm{lim}_\cC \widehat{F}\to \mathrm{lim}_\cC \widetilde{F}$ is a Dwyer-Kan equivalence between Segal spaces, and so a weak equivalence in $\Dsset_\mathrm{CSS}$ by \cref{thm:MSCSS} (iii). This concludes the proof. 
\end{proof}

\begin{rem}
Since $\Dsset_\mathrm{Cat}$ is cartesian closed by \cref{thm:cartclosed}, every Segal space $X$ has a canonical Reedy fibrant simplicial resolution $\Delta^{\op}\to \Dsset_\mathrm{Cat}$ sending $[m]\mapsto X^{N I[m]}$, where $NI[m]$ is the nerve of the contractible groupoid with $(m+1)$ objects. Indeed, this follows by adjunction from the fact that $[m]\mapsto N I[m]$ defines a Reedy cofibrant cosimplicial diagram in $\Dsset_\mathrm{Cat}$ that is homotopically constant. One can use this canonical simplicial resolution to explicitly compute the homotopy limit of a diagram of Segal spaces $X\colon \cC\to \Dsset_\mathrm{Cat}$, which does not need to be injectively fibrant. This can be expressed as a weighted limit, via the Bousfield--Kan formula \cite{AO}: writing $X^K=\lim_{[m]\in (\Delta_{/K})^{\mathrm{op}}} X^{NI[m]}$, we have that
\[
\textstyle \mathrm{holim}_{\cC} X = \int_{c\in \cC} X(c)^{N(\cC/c)}.
\]
For example, an object in $\mathrm{holim}_{c\in \cC} X(c)$ can be described as a matching family of objects up to coherent homotopy, consisting of the data of an object $x_c\in X(c)_{0, 0}$ for each $c\in\cC$, and a compatible family of maps $f_\alpha\colon N I[m]\to X(c_m)$ for each chain $\alpha\colon c_0\to c_1\to \dots \to c_m$ in $\cC$. 
\end{rem}

\bibliographystyle{alpha}
\bibliography{references}

\end{document}